\numberwithin{equation}{section}
\newcommand{\nc}{\newcommand}
\nc{\parent}[1]{$[\![#1]\!]$}
\newtheorem{theorem}{Theorem}[section]
\newtheorem{lemma}{Lemma}[section]
\newtheorem{example}{Example}[section]
\newtheorem{corollary}{Corollary}[section]
\newtheorem{proposition}{Proposition}[section]
\newtheorem{remark}{Remark}[section]
\newtheorem{definition}{Definition}[section]
\newcommand{\Implies}[2]{$\text{\ref{#1}}\implies\text{\ref{#2}}$}
\newenvironment{pf-main}{{\sc Proof of Theorem \ref{mainresult}.}\hspace{3mm}}{\qed}
\nc{\cadlag}{c\`{a}dl\`{a}g } \nc{\Ito}{It\^o} \nc{\ba}{\begin{array}}\nc{\elr}{(\ell,r)}
	\nc{\ea}{\end{array}} \nc{\be}{\begin{equation}}
	\nc{\ee}{\end{equation}} \nc{\bea}{\begin{eqnarray}}
	\nc{\eea}{\end{eqnarray}} \nc{\bean}{\begin{eqnarray*}}
	\nc{\eean}{\end{eqnarray*}} \nc{\bu}{\bullet} \nc{\nn}{\nonumber}
\nc{\cA}{{\mathcal A}} \nc{\cB}{{\mathcal B}} \nc{\cE}{\mathcal{E}}\nc{\cC}{{\mathcal
		C}} \nc{\cD}{{\mathcal D}} \nc{\bbD}{\mathbb{D}}
\nc{\cG}{{\mathcal G}} \nc{\cF}{{\mathcal F}} \nc{\cS}{{\mathcal
		S}} \nc{\cU}{{\mathcal U}} \nc{\cH}{{\mathcal H}}
\nc{\cK}{{\mathcal K}}\nc{\cL}{{\mathcal L}}  \nc{\cM}{{\mathcal
		M}} \nc{\cO}{{\mathcal O}} \nc{\cP}{{\mathcal P}} \nc{\bfE}{\mathbf{E}}
\nc{\bbE}{\mathbb{E}} \nc{\tbA}{\tilde{\bbA}}\nc{\bbA}{\mathbb{A}}\nc{\bbF}{\mathbb{F}}
\nc{\bbEQ}{\mathbb{E}_{\mathbb{Q}}} \nc{\eps}{\varepsilon}
\nc{\bbEP}{\mathbb{E}_{\mathbb{P}}}\nc{\bbL}{\mathbb{L}}
\nc{\what}{\widehat} \nc{\bbP}{\mathbb{P}} \nc{\bbQ}{\mathbb{Q}}
\nc{\del}{\partial} \nc{\Om}{\Omega} \nc{\om}{\omega}
\nc{\bbR}{\mathbb{R}} \nc{\bbN}{\mathbb{N}} \nc{\fps}{$(\Om, \cF,
	(\cF_t)_{t\geq 0}, \bbP)$} \nc{\bbC}{\mathbb{C}}
\nc{\bfr}{\begin{flushright}} \nc{\efr}{\end{flushright}}
\nc{\dXt}{\Delta X_{t}} \nc{\dXs}{\Delta X_{s}}
\nc{\bs}{\blacksquare} \nc{\dX}{\Delta X} \nc{\dY}{\Delta Y}
\nc{\dnkx}{\left(X(T^{n}_{k})-X(T^{n}_{k-1})\right)}
\nc{\esssup}{\mathrm{ess}\mbox{ }\mathrm{sup}}
\nc{\essinf}{\mathrm{ess}\mbox{ } \mathrm{inf}}
\nc{\dhats}{\widehat{\delta_s}} \nc{\half} {\frac{1}{2}}
\nc{\ol}{\overline}
\def\rar{\rightarrow}
\nc{\chf}{\mbox{$\mathbf1$}}
\begin{document}
	
	\title[Integral representation of defective functions]{Minimal subharmonic functions and related integral representations}
	\author{Umut \c{C}et\.in}
	\address{Department of Statistics, London School of Economics and Political Science, 10 Houghton st, London, WC2A 2AE, UK}
	\email{u.cetin@lse.ac.uk}
	\date{\today}
	\begin{abstract}
		A Choquet-type integral representation result for non-negative subharmonic functions of a one-dimensional regular diffusion is established. The representation allows in particular an integral equation for strictly positive subharmonic functions that is driven by the Revuz measure of the associated continuous additive functional.    Moreover, via the aforementioned integral equation,  one can construct an {\em \Ito-Watanabe pair} $(g,A)$ that consist of a subharmonic function $g$ and  a continuous additive functional $A$ is with Revuz measure $\mu_A$ such that $g(X)\exp(-A)$ is a local martingale. Changes of measures associated with \Ito-Watanabe pairs are studied and shown to modify the long term behaviour of the original diffusion process to exhibit transience. 
	\end{abstract}
	\maketitle

\section{Introduction}
One of the fundamental results in the potential theory of Markov processes is the Riesz representation of an excessive (non-negative superharmonic) function as the sum of a harmonic function and the potential of a measure (see, e.g., Section VI.2 in \cite{BG}, \cite{Duncan71} and \cite{ChungRao80} for proofs under various assumptions). In the particular setting of a regular transient one-dimensional diffusion this amounts to a finite excessive function $f$ having the following representation:
\[
f(x)=\int u(x,y)\mu(dy)+ h(x),
\]
where $h$ is a harmonic function, $u$ is the potential density describing the {\em minimal} excessive functions, and $\mu$ is a Borel measure.

On the other hand, analogous representation results for {\em non-negative}  subharmonic functions of a given Markov processes do not seem to exist in  general form. Note that non-negativity is essential here: If $g$ is a subharmonic function,  although $-g(X)$ is a supermartingale, $-g$ is not excessive as it is  negative. Thus, the representation results for excessive functions are not applicable. One exception to this general rule occurs when the Markov process is transient and the subharmonic function $g$ is bounded by $K$. Then one can obtain a representation using the available theory for the excessive function $K-g$. However, this approach will fail when $g$ is unbounded or the Markov process is recurrent, which renders all excessive functions constant { (see also Remark \ref{r:naive} for  explicit and non-trivial examples illustrating this difficulty)}.   

Non-negative subharmonic functions are also known as `defective' functions (see p.31 of Dellacherie and Meyer \cite{DM-C}) and play an important role in Rost's solution to the Skorokhod embedding problem \cite{rost_stopping71}. Despite their abundance relative to excessive functions and their use in the potential theory, ``It is quite depressing to admit that one knows almost nothing about defective functions'' as Dellacherie and Meyer point out in \cite{DM-C} 

{ The main purpose of this study  is to fill a gap in this direction by establishing an integral representation for non-negative subharmonic functions of a regular one-dimensional diffusion $X$ on a given interval $\elr$. This is followed by an in-depth analysis of integral equations associated with strictly positive subharmonic functions. The main contributions of the paper will be summarised in the following paragraphs:
\subsection{Extremal subharmonic functions and last passage times}	Theorem \ref{t:choquet_ex}  identifies the {\em extremal} subharmonic functions, which ultimately allows a {\em Choquet-type} integral representation via Theorem \ref{t:choquet_gen}.  Evidently,  the  submartingales defined by the extremal subharmonic functions belong to Class $\Sigma$, (see  \cite{Nikeghbali06} and \cite{Nikeghbalimult}). This leads to  an alternative representation of minimal subharmonic functions in terms of last passage times. Given the close connections with submartingales of Class $\Sigma$ and Az\'ema supermartingales, { one obtains in particular that monotone subharmonic functions can be written as a mixture of Az\'ema {\em submartingales}   as long as the set of Az\'ema submartingales is not empty (see Remark \ref{r:Azema}).  Moreover, the representation in terms of last passage times indicates that this connection could still be useful in a multidimensional framework. The arguments can be also used to prove the analogous result that one-dimensional excessive functions  are in fact  a mixture of Az\'ema supermartingales, which seems to have been unnoticed in the literature.} 

\subsection{Integral equations for strictly positive subharmonic functions} To every strictly positive subharmonic function one can associate a {\em positive continuous additive functional} (PCAF) $A$ such that $g(X)\exp(-A)$ is a local martingale.  For historical reasons mentioned in Section \ref{s:choquet} such pairs $(g,A)$ are called \Ito-Watanabe pairs throughout the text.  This observation helps one to characterise all subharmonic functions appearing in \Ito-Watanabe pairs as solutions of integral equations by means of the Choquet representation from Theorem \ref{t:choquet_gen}.  Section \ref{s:IE} establishes that  any such subharmonic function can be written as a linear combination of monotone subharmonic functions that are solutions of particular integral equations. 

A family of integral equations for which solutions exist and can be used to generate {\em all} strictly positive subharmonic functions are studied in Section \ref{s:existence}. { The novelty of this section as opposed to the  `classical' approach of characterising the so-called fundamental solutions (e.g. as in \cite{dayanikRD} or \cite{BL}) as particular Laplace transforms in the spirit of \Ito {} and McKean \cite{IM} is that the  fundamental solutions herein are characterised as fixed points of monotone integral operators and therefore can be obtained after a straightforward and fast numerical algorithm.}

\subsection{Transient transformations of diffusions} Section \ref{s:transform} studies changes of measures (or {\em path transformations}) for diffusions via \Ito-Watanabe pairs. It is in particular shown that after these path transformations  the diffusion process ends up transient, thereby providing  a complete counterpart to {\em recurrent transformations} introduced in \cite{rectr} via $h(X)\exp(B)$, where $h$ is excessive and $B$ is a PCAF.  
\subsection{Connections to the optimal stopping problem with random discounting}
An important motivation to consider the path transformations developed in this paper is the optimal stopping problem with random discounting studied earlier by \cite{BL} and \cite{dayanikRD}. The problem of interest is to maximise $E^x[e^{-A_{\tau}}f(X_{\tau})]$ over all stopping times,
where $E^x$ is expectation with respect to $P^x$ that corresponds to the law of $X$ with $X_0=x$, $f$ is a reward function, and $A$ is a PCAF. Using the arguments of \cite{rectr} the above problem becomes equivalent to the solution of 
$$
\sup_{\tau}Q^x\left[\frac{f(X_{\tau})}{g(X_{\tau})}\right],
$$
where $Q^x$ is locally absolutely continuous with respect to $P^x$ and $g$ is a positive subharmonic function. Thus, the value function, after an explicit change of variable,  becomes the least concave majorant of $f/g$.   This idea is similar at heart to the approach first proposed by Beibel and Lerche \cite{BL} and later developed in further generality by Dayan\i k in \cite{dayanikRD} for one-dimensional diffusions (see also \cite{CPT} for another application of `removing the discounting' in the context of L\'evy processes).  { The main idea in all these works  is to simplify the problem by `removing' the discounting factor via a measure change. That is, find a subharmonic function $g$ so that $g(X)\exp(-A)$ is a local martingale. This allows for the reduction of the above optimal stopping problem to one without discounting, in which the value function can be identified as the concave envelope of a given function.} However, \cite{BL} and \cite{dayanikRD} only give an abstract definition in terms of the expectation of a multiplicative functional. On the other hand, the representation of strictly positive subharmonic functions established in Sections \ref{s:IE} and \ref{s:existence}  allow one to compute $g$ explicitly by solving an integral equation, { whose numerical solution is easy as observed above. 

The outline of the paper is as follows: Section \ref{s:prelim} introduces the set up and basic terminology that will be used in the paper. Section \ref{s:choquet} studies  the Choquet representation of non-negative subharmonic functions and its first consequences. Section \ref{s:IE} gives a complete characterisation  and uniqueness of solutions of the integral equations that are solved by semi-bounded subharmonic functions appearing in \Ito-Watanabe pairs. Section \ref{s:existence} establishes the existence of solutions for the integral equations of Section \ref{s:IE} and discusses its numerical solutions. The path transformations via \Ito-Watanabe pairs are studied in Section \ref{s:transform}, and Section \ref{s:conc} concludes.
\section{Preliminaries} \label{s:prelim}
Let $X=(\Om, \cF, \cF_t, X_t, \theta_t, P^x)$ be a regular  diffusion on $\bfE:=(\ell,r)$, where $ -\infty \leq \ell <r \leq \infty$, and $\mathcal{E}^u$ stands for the $\sigma$-algebra of universally measurable subsets of $\bfE$. The boundaries are assumed to be absorbing, i.e. if any of the boundaries are reached in finite time, the process stays there forever. We do not allow killing inside $\bfE$, which is in fact without loss of generality for the purposes of the present paper as explained in Remark \ref{r:killing}.  As usual, $P^x$ is the law of the process initiated at point $x$ at $t=0$ and $\zeta$ is its lifetime, i.e. $\zeta :=\inf\{t>0:X_{t} \in \{\ell,r\} \}$. The transition semigroup of $X$ will be given by the kernels $(P_t)_{t \geq 0}$ on $(\bfE, \cE^u)$ and $(\theta_t)_{t \geq 0}$ is the shift operator.  { The filtration $(\cF^*_t)_{t \geq 0}$ will denote the universal completion of the natural filtration of $X$ and $\cF^u$ is the $\sigma$-algebra generated by the maps $f(X_t)$ with $t\geq 0$ and $f$ universally measurable\footnote{The reader is referred to Chapter 1 of  \cite{GTMP}  for the details.}. We shall set $\cF_t:= \cF^*_{t+}$ for each $t\geq 0$ so that  $(\cF_t)_{t \geq 0}$ is right continuous}. The infinitesimal generator of $X$ will be denoted by $\cA$.

For $y \in (\ell,r)$ the stopping time  $T_y:=\inf\{t> 0: X_t=y\}$, where the infimum of an empty set  equals $\zeta$ by convention, is the first hitting time of $y$. Likewise $T_{ab}$ will denote the exit time from the interval $(a,b)$.  Clearly, one can extend the notion of `hitting time' to each of the boundary points, i.e. by allowing $y \in \{\ell,r\}$ in $T_y$, as the boundaries are absorbing. 

Such a one-dimensional diffusion is completely characterised by its  strictly increasing and continuous scale function $s$ and speed measure $m$.  The reader is referred to Chapter II of \cite{BorSal} for a concise treatment of these characteristics (see also \cite{EH} for a rigorous discussion of h-transformation of such diffusions, which is related to, yet different than, the type of conditioning that will be discussed in Section \ref{s:transform}). In particular, since the killing measure is null, the infinitesimal generator $\cA$ of the diffusion is given by $\cA=\frac{d}{dm}\frac{d}{ds}$. 

\begin{remark}
	It is worth emphasizing here that no assumption of absolute continuity with respect to the Lebesgue measure  is made for the scale function or the speed measure. That is, $X$ is not necessarily the solution of a stochastic differential equation. A notable example is {\em the skew Brownian motion} (see \cite{skewBM81}).
\end{remark}

The concept of a {\em positive continuous additive functional} will be playing a key role throughout the paper.
\begin{definition}
	\label{d:caf} A family $A=(A_t)_{t \geq 0}$ of functions from $\Omega$ to $[0,\infty]$ is called a {\em positive continuous additive functional} (PCAF) of $X$ if
	\begin{itemize}
		\item[i)] Almost surely the mapping $t \mapsto A_t$ is nondecreasing, (finite) continuous on $[0,\zeta)$, and $A_t = A_{\zeta-}$ for $t \geq \zeta$.
		\item[ii)] $A_t$ is $\cF_t$-measurable for each $t\geq 0$.
		\item[iii)] For each $t$ and $s$ $A_{t+s}=A_t + A_s \circ \theta_t$, a.s..
	\end{itemize}
\end{definition}
To each PCAF $A$ one can associate a {\em Revuz measure} $\mu_A$ defined on the Borel subsets of $\elr$  by
\be \label{d:Revmes}
\int_{\elr}f(y)\mu_A(dy)=\lim_{t \rar 0}t^{-1}E^m[\int_0^tf(X_s)dA_s],
\ee
where $f$ is a non-negative Borel function and $E^m[U]:=\int_{\elr}E^x[U]m(dx)$ for any non-negative random variable $U$. It must be noted that the Revuz measure depends on the choice of the speed measure. 

Moreover, in this one-dimensional setting $\mu_A$ will be a Radon measure\footnote{This  follows from Theorem 4.4 in \cite{BGafd64}. Note that the fine topology induced by $X$ coincides with the standard metric topology on $\elr$ (see, e.g., Exercise 10.22 in \cite{GTMP}). Moreover, one-dimensional diffusions are self-dual with respect to their speed measure. Thus, compact subsets of $\elr$ are (co-)special.}. 
\begin{remark}
	\label{r:killing} One possible use of continuous additive functionals is the construction of a diffusion with non-zero killing measure from a diffusion with the same scale and speed but no killing a described in Paragraph 22 of Chapter II in \cite{BorSal}. For this reason and given the nature of questions addressed in this paper, one indeed does not lose any generality by assuming a null killing measure.
\end{remark} 

As the killing measure is null, the potential density with respect to $m$ of a transient diffusion is given by
\[
u(x,y)= \lim_{a \rar \ell}\lim_{b \rar r} \frac{(s(x\wedge y)-s(a))(s(b)-s(x \vee y))}{s(b)-s(a)}, \qquad x, y \mbox{ in } \elr.
\]
In this case (see, e.g.,  Theorem VI.3.1 in \cite{BG}) for any non-negative Borel function $f$
\be \label{e:potentialA}
E^x\left[\int_0^{\zeta}f(X_t)dA_t\right]=\int_{\ell}^{r}u(x,y)f(y)\mu_A(dy).
\ee
Moreover, the finiteness of $A_{\zeta}$, or equivalently $A_{\infty}$, is completely determined in terms of $s$ and $\mu_A$. The following is a direct consequence of Lemma A1.7 in \cite{AS98} (see \cite{MU} for an analogous result and a different technique of proof in case of $dA_t =f(X_t)dt$ for some non-negative measurable $f$).
\begin{theorem}\label{t:Afinite} Let $A$ be a PCAF of $X$ with Revuz measure $\mu_A$. 
	\begin{enumerate}
		\item If $X$ is recurrent and $\mu_A(\bfE)>0$, then $A_{\zeta}=\infty$, a.s..
		\item If $s(\ell)>-\infty$, then on $[X_{\zeta-}=\ell]$, $A_{\zeta}=\infty$  a.s.  or $A_{\zeta}<\infty$  a.s. whether 
		\[
		\int_{\ell}^c(s(x)-s(\ell))\mu_A(dx)  
		\]
		is infinite or not for some $c \in \elr$.
		\item If $s(r)<\infty$, then on $[X_{\zeta-}=r]$ $A_{\zeta}=\infty$  a.s.  or $A_{\zeta}<\infty$  a.s.  whether 
		\[
		\int_c^r(s(r)-s(x))\mu_A(dx)
		\]
		is infinite or not for some $c \in \elr$.
	\end{enumerate}
\end{theorem}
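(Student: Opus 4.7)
My plan is to reduce each of the three assertions to the general finiteness criterion encapsulated in Lemma A1.7 of \cite{AS98}, using the one-dimensional potential-theoretic identities available here.

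For (1), first note that if $X$ is recurrent then $\zeta=\infty$ almost surely, since the absorbing boundaries are inaccessible. Because $\mu_A$ is Radon and $\mu_A(\bfE)>0$, one can choose a relatively compact interval $[a,b]\subset\elr$ with $\mu_A([a,b])>0$. Define inductively $T_0=0$ and $T_{n+1}=\inf\{t\geq T_n+1:X_t\in[a,b]\}$; each $T_n$ is a.s.\ finite by recurrence. The strong Markov property combined with the Revuz formula \eqref{d:Revmes} shows that $E^x[A_{T_{n+1}}-A_{T_n}\mid\cF_{T_n}]$ is bounded below by a positive constant depending only on $a,b$ and $\mu_A([a,b])$. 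A conditional Borel--Cantelli argument then gives $A_\infty=\sum_n (A_{T_{n+1}}-A_{T_n})=\infty$ a.s.

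For (2), fix an interior point $c\in\elr$ and consider $X$ killed on exiting $(\ell,c)$. Its potential density with respect to $m$ is
\[
u_c(x,y)=\frac{(s(x\wedge y)-s(\ell))(s(c)-s(x\vee y))}{s(c)-s(\ell)},\qquad x,y\in(\ell,c),
\]
which is finite precisely because $s(\ell)>-\infty$. Applying \eqref{e:potentialA} to this subprocess yields
\[
E^x[A_{T_{\ell c}}]=\int_\ell^c u_c(x,y)\,\mu_A(dy).
\]
Since $u_c(x,y)\asymp s(y)-s(\ell)$ as $y\downarrow \ell$, the contribution of $\mu_A$ near $\ell$ to the above integral is finite if and only if $\int_\ell^c(s(y)-s(\ell))\,\mu_A(dy)<\infty$. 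On $[X_{\zeta-}=\ell]$ one has $X_{T_{\ell c}}=\ell$ for every $c>x$, so letting $c\uparrow r$ transfers the finiteness of $E^x[A_{T_{\ell c}}\chi_{\{X_{T_{\ell c}}=\ell\}}]$ into finiteness of $A_\zeta$ on $[X_{\zeta-}=\ell]$. Conversely, if the integral diverges, the same representation forces $A_{T_{\ell c}}=+\infty$ with positive probability for every $c$, which on the event $[X_{\zeta-}=\ell]$ propagates to $A_\zeta=\infty$ almost surely.

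Part (3) is treated symmetrically by interchanging the roles of $\ell$ and $r$ and replacing $s(x)-s(\ell)$ with $s(r)-s(x)$. The main obstacle is not the expectation computation but the zero--one dichotomy: one must rule out intermediate behaviour where $A_\zeta$ is finite on a proper subset of $[X_{\zeta-}=\ell]$. This is precisely the content of Lemma A1.7 in \cite{AS98}, whose hypotheses are easy to verify in the present setting (regular one-dimensional diffusion with absorbing boundaries and no killing), so that once the expected-value identities above are in place, the theorem follows as a direct consequence.
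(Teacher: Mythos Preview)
Your approach coincides with the paper's: both treat the theorem as a direct consequence of Lemma~A1.7 in \cite{AS98}, and the paper offers no further argument beyond that citation.

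That said, one step in your heuristic sketch for part~(2) is incorrect and worth flagging. The claim that on $[X_{\zeta-}=\ell]$ one has $X_{T_{\ell c}}=\ell$ for every $c>x$ is false: the process may well hit $c$ before eventually drifting to $\ell$, so $T_{\ell c}$ need not equal $\zeta$ pathwise. Likewise, divergence of $E^x[A_{T_{\ell c}}]$ does not by itself force $A_{T_{\ell c}}=+\infty$ with positive probability, since $A$ is finite on $[0,\zeta)$ by definition of a PCAF. These expectation computations are useful for identifying the correct integrability condition, but---as you yourself note---the actual zero--one dichotomy requires the machinery of \cite{AS98}, and the intermediate heuristics should not be presented as a self-contained argument.
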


If $A$ is a PCAF, it can be used to apply a time-change, the most prominent example of which appears when constructing a diffusion with given characteristics from a Brownian motion (see Chapter 5 in \cite{IM} for a detailed account). In particular, if one uses $A$ as `the clock,' behaviour of  a given diffusion at the boundary may change. The following  characterisation of the boundary behaviour will be useful in subsequent sections. 
\begin{definition} \label{d:CAFbc}
	Let $A$ be a PCAF with Revuz measure $\mu_A$ such that $\mu_A(\elr)>0$. Pick $b \in \elr$ and consider the integrals 
	\bea \label{e:Aboundaryx}
	\int_{\ell}^b \mu_A((z,b))s(dz) & \mbox{\phantom{bos}} &\left(\mbox{resp.} \int_b^r \mu_A((b,z))s(dz) \right)\\
	\label{e:Aboundarye}
	\int_{\ell}^b (s(b)-s(z))\mu_A(dz) & \mbox{\phantom{bos}} &\left(\mbox{resp.} \int_b^r ((s(z)-s(b))\mu_A(dz) \right).
	\eea
	\begin{description}
		\item[\bf $A$-regular:] $\ell$ (resp. $r$) is (an) $A$-regular (boundary) if both (\ref{e:Aboundaryx}) and (\ref{e:Aboundarye}) are finite.
		\item[\bf $A$-exit:]  $\ell$ (resp. $r$) is (an) $A$-exit (boundary) if (\ref{e:Aboundaryx}) is finite and (\ref{e:Aboundarye}) is infinite.
		\item[\bf $A$-entrance:]  $\ell$ (resp. $r$) is (an) $A$-entrance (boundary) if (\ref{e:Aboundaryx}) is infinite and (\ref{e:Aboundarye}) is finite.
		\item[\bf $A$-natural:]  $\ell$ (resp. $r$) is (an) $A$-natural (boundary) if both (\ref{e:Aboundaryx})  and (\ref{e:Aboundarye}) are infinite.
	\end{description}
\end{definition}
\begin{remark}
	\label{r:Aboundary} Note that (\ref{e:Aboundaryx}) is infinite if $s(\ell)$ (resp. $s(r)$) is so. In case of $s(\ell)$ being finite, the finiteness of (\ref{e:Aboundaryx}) is equivalent to that of $A_{\zeta}$ on $[X_{\zeta-}=\ell]$ (resp. $[X_{\zeta-}=r]$ via a straightforward integration by parts.
\end{remark}
\begin{remark} The boundary classification above is significantly different than the `usual' one (see, e.g., Paragraph 6 in Section II.1 in \cite{BorSal}). Consult Remark \ref{r:Anatural} to see this in an interesting example.
\end{remark}
\begin{proposition} \label{p:hittimes}
	Let $A$ be a PCAF with Revuz measure $\mu_A$ such that $\mu_A(\elr)>0$.\begin{enumerate}
		\item If $\ell$ (resp. $r$) is $A$-entrance, for any $y\in \elr$ there exists a $t_0>0$  such that
		\be \label{e:entrdef}
		\lim_{x \rar \ell} P^x(A_{T_y}<t_0)>0. \qquad \left(\mbox{resp. }  \lim_{x \rar r} P^x(A_{T_y}<t_0)>0.\right)
		\ee
		\item If $\ell$ (resp. $r$) is $A$-natural, for any $y\in \elr$ and $t>0$
		\be \label{e:natdef}
		\lim_{x \rar \ell} P^x(A_{T_y}<t)=0. \qquad \left(\mbox{resp. }  \lim_{x \rar r} P^x(A_{T_y}<t)=0.\right)
		\ee
	\end{enumerate}
\end{proposition}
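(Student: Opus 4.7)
\noindent The plan is to control the moments of $A_{T_y}$, starting with the first moment for part (1). Observe first that the $A$-entrance condition at $\ell$ forces $s(\ell)=-\infty$: Fubini yields $(s(b)-s(\ell))\mu_A((\ell,b))=\text{(\ref{e:Aboundaryx})}+\text{(\ref{e:Aboundarye})}$, so if $s(\ell)>-\infty$ and (\ref{e:Aboundarye}) were finite with (\ref{e:Aboundaryx}) infinite, then $\mu_A((\ell,b))=\infty$, and the bound $s(b)-s(z)\ge(s(b)-s(\ell))/2$ in a neighbourhood of $\ell$ would force (\ref{e:Aboundarye}) to be infinite, a contradiction. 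Hence $\ell$ is inaccessible for $X$, and the Green function of $X$ killed at $y$ on $(\ell,y)$ is $G_y(x,z)=s(y)-s(x\vee z)$. Applying (\ref{e:potentialA}) to the killed process then gives
\[
E^x[A_{T_y}] = (s(y)-s(x))\,\mu_A((\ell,x]) + \int_x^y (s(y)-s(z))\,\mu_A(dz).
\]
Since $(s(y)-s(x))\mu_A((\ell,x])\le\int_\ell^x(s(y)-s(z))\mu_A(dz)$, the right-hand side is dominated by $\int_\ell^y(s(y)-s(z))\mu_A(dz)$, a version of (\ref{e:Aboundarye}) which is finite under the $A$-entrance hypothesis. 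Thus $\sup_{x<y}E^x[A_{T_y}]=:C<\infty$, and Chebyshev's inequality yields $P^x(A_{T_y}<t_0)\ge 1-C/t_0$, which is strictly positive for $t_0>C$ uniformly in $x$; a Jensen lower bound $E^x[e^{-\lambda A_{T_y}}]\ge e^{-\lambda C}$ secures the existence of the limit.

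For (2), I would split according to whether $\ell$ is accessible for $X$. If accessible, optional stopping of the local martingale $s(X)$ at $T_\ell\wedge T_y$ gives $P^x(T_y<T_\ell)=(s(x)-s(\ell))/(s(y)-s(\ell))\to 0$ as $x\to\ell$; on the complement $\{T_\ell<T_y\}$ one has $T_y=\zeta$, so $A_{T_y}=A_\zeta$, and Remark \ref{r:Aboundary} combined with (\ref{e:Aboundaryx})$=\infty$ forces $A_\zeta=\infty$ on $\{X_{\zeta-}=\ell\}$. Hence $\{A_{T_y}<t\}\subset\{T_y<T_\ell\}$, whose probability vanishes. If $\ell$ is inaccessible for $X$, I would apply the time-change $Y_s:=X_{\alpha_s}$ with $\alpha_s:=\inf\{t:A_t>s\}$; then $Y$ is a regular one-dimensional diffusion on $\elr$ with scale $s$ and speed measure $\mu_A$, and $T^Y_y=A_{T_y}$ almost surely. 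The Feller integrals of $Y$ at $\ell$ coincide with (\ref{e:Aboundarye}) and (\ref{e:Aboundaryx}), both infinite, so $\ell$ is natural for $Y$. The classical fact that the increasing $\lambda$-eigenfunction $\phi^Y_\lambda$ of a regular one-dimensional diffusion vanishes at a natural boundary then gives
\[
E^x\!\left[e^{-\lambda A_{T_y}}\right] = \frac{\phi^Y_\lambda(x)}{\phi^Y_\lambda(y)} \to 0 \qquad (x\to\ell),
\]
and the exponential Chebyshev bound $P^x(A_{T_y}<t)\le e^{\lambda t}E^x[e^{-\lambda A_{T_y}}]$ closes the argument.

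The main technical obstacle is the time-change step in (2): one has to verify that $Y=X\circ\alpha$ is genuinely a regular diffusion with the claimed characteristics (a subtlety if $\mathrm{supp}(\mu_A)\neq\elr$, where $A$ is locally constant and $\alpha$ has jumps), that $T^Y_y=A_{T_y}$ almost surely for $y$ in the support of $\mu_A$, and that the natural-boundary vanishing $\phi^Y_\lambda(\ell^+)=0$ is applicable under the present assumptions. These points are classical in one-dimensional diffusion theory but require careful bookkeeping, especially because the construction $Y=X\circ\alpha$ inherits $X$'s non-attainability of $\ell$ and therefore need not coincide with the Feller-minimal diffusion for the other boundary classifications. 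The symmetric statements at $r$ follow by the obvious modification.
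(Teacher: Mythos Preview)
Your argument for part~(1) is essentially the paper's: compute $E^x[A_{T_y}]$ via the Green kernel of $X$ killed at $T_y$, bound it by the finite integral $\int_\ell^y(s(y)-s(z))\mu_A(dz)$, and apply Chebyshev. Your derivation that $s(\ell)=-\infty$ is a nice explicit justification of what the paper simply asserts.

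Part~(2) is where you diverge. For the case $s(\ell)>-\infty$ your argument is more elementary than the paper's: you observe directly that $\{A_{T_y}<t\}\subset\{T_y<\zeta\}$ (since $A_\zeta=\infty$ on $[X_{\zeta-}=\ell]$ by Theorem~\ref{t:Afinite}) and that $P^x(T_y<\zeta)=(s(x)-s(\ell))/(s(y)-s(\ell))\to0$. The paper instead performs an $h$-transform by $h(x)=(s(y)-s(x))/(s(y)-s(\ell))$ and runs Feynman--Kac for the conditioned process, which is heavier machinery for the same conclusion. For the case $s(\ell)=-\infty$ the routes genuinely differ: you pass to the time-changed process $Y=X\circ\alpha$ and invoke the vanishing of the increasing eigenfunction at a natural boundary, whereas the paper works directly with $h(x):=E^x[\exp(-A_{T_y})]$ and the Feynman--Kac integral equation
\[
h(x)=1-\int_\ell^y \bigl(s(y)-s(x\vee z)\bigr)h(z)\,\mu_A(dz),
\]
from which $h(\ell+)=0$ follows immediately because $h$ is bounded and $\int_\ell^b(s(y)-s(z))\mu_A(dz)=\infty$. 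The paper's route sidesteps exactly the obstacle you flag: the Feynman--Kac identity holds for arbitrary Revuz measures $\mu_A$, with no regularity demanded of the time-changed process and no need to reconcile $Y$ with a gap diffusion when $\mathrm{supp}(\mu_A)\neq(\ell,r)$. Your time-change argument is morally correct and can be made rigorous (the support of $\mu_A$ must accumulate at $\ell$ since (\ref{e:Aboundarye}) diverges, and one can reduce to $y\in\mathrm{supp}(\mu_A)$ by monotonicity of $A_{T_y}$ in $y$), but the bookkeeping you anticipate is real, and the direct Feynman--Kac approach is both shorter and self-contained.
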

\begin{proof}
	\begin{enumerate}
		\item Note that $s(\ell)$ is necessarily infinite and, therefore, for any $x<y$, $P^x(T_y<\zeta)=1$. Killing $X$ at $T_y$ yields a transient diffusion on $(\ell, y)$ with potential kernel
		\[
		v(x,z):= s(y)-s(x\vee z).
		\]
		In particular
		\[
		E^x[A_{T_y}]=\int_{\ell}^y (s(y)-s(x\vee z))\mu_A(dz).
		\]
		Observe by Chebyshev's inequality that 
		\[
		P^x(A_{T_y}>t)\leq \frac{E^x[A_{T_y}]}{t}.
		\]
		Also note that $\ell$ being $A$-entrance implies $\mu_A((\ell,y))<\infty$. Thus, by means of  the dominated convergence theorem one obtains
		\[
		\lim_{x \rar \ell}P^x(A_{T_y}>t)\leq \frac{1}{t}\int_{\ell}^y\lim_{x\rar \ell} (s(y)-s(x\vee z))\mu_A(dz)=\frac{1}{t}\int_{\ell}^y(s(y)-s(z))\mu_A(dz)<\infty.
		\]
		Hence, choosing $t$ big enough one shows $\lim_{x \rar \ell}P^x(A_{T_y}>t)<1$. 
		\item Again, killing $X$ at $T_y$ yields a transient diffusion with potential kernel $v$ that coincides with the one from the previous part if $s(\ell)=-\infty$. Otherwise, 
		\[
		v(x,z)=\frac{(s(x\wedge z)-s(\ell))(s(y)-s(x\vee z))}{s(y)-s(\ell)}.
		\]
		Define $h(x):=E^x[\exp(-A_{T_y})]$. Since $h$ is excessive, it must be a concave function of $s$. In particular, it is continuous and  $\lim_{x \rar \ell}h(x)$ exists. 
		
		It follows from Feynman-Kac's formula (see (8) on p. 119 of \cite{FP-Kac}) that $h$ solves
		\[
		h(x)=1-\int_{\ell}^y v(x,z)h(z)\mu_A(dz).
		\]
		Observe that under the assumption that $\ell$ is $A$-natural and $s(\ell)=-\infty$, $\int_{\ell}^{b} v(\ell,z)\mu_A(dz)=\infty$ for any $b\in (\ell,y)$. Thus, one necessarily obtains
		\[
		\lim_{x\rar \ell}h(x)=0.
		\]
		If $s(\ell)>-\infty$, then $\int_{\ell}^{b} v(x,z)\mu_A(dz)=\infty$ for any $x\in (\ell,b)$, leading to $	\lim_{x\rar \ell}h(x)=0.$
		Hence,
		\[
		\lim_{x \rar \ell} P^x(A_{T_y}<t)\leq e^t 	\lim_{x \rar \ell}E^x[\exp(-A_{T_y})]=0.
		\]
	\end{enumerate}
\end{proof}
\section{Non-negative subharmonic functions and Choquet representation} \label{s:choquet}
\begin{definition}
	A Borel function $g:\bfE \to \bbR$ is {\em subharmonic} if  for any  $x \in (\ell,r)$  $g(X^{T_{ab}})$ is a uniformly integrable $P^x$-submartingale whenever $\ell<a<b<r$. The class of non-negative subharmonic functions is denoted by $\cS$.  $\cS^+$  will be the set of elements of $\cS$ that are are strictly positive on $(\ell,r)$. 
\end{definition}
Since $g(x)\leq E^x[g(X_{T_{ab}})]=g(a)P^x(T_a<T_b)+g(b)P^x(T_b<T_a)=g(a)\frac{s(b)-s(x)}{s(b)-s(a)}+g(b)\frac{s(x)-s(a)}{s(b)-s(a)}$, one immediately deduces that $g$ must be a convex function of $s$ on the open interval $(\ell,r)$ to be subharmonic, which in particular entails that $g$ is an absolutely continuous function of $s$. { In particular, the right and left $s$-derivatives, $\frac{d^+g}{ds}$ and  $\frac{d^-g}{ds}$, exist (see Section 3 of the Appendix in \cite{RY} for a discussion of $s$-convexity).} 
\begin{remark}
	In the sequel whenever a convex function is considered on some open interval $(a,b)$ it will be automatically extended to $[a,b]$ by continuity.
\end{remark}

The main purpose of this section is to obtain Choquet-type integral representations for non-negative subharmonic functions. 
\begin{theorem}\label{t:choquet_ex}
	Define $\cK_0^+:=\{g \in \cS: g(\ell+)=0, \mbox{ $g$ is non-decreasing with } \frac{d^+g}{ds}\leq 1\}$ and $\cK_0^-:=\{g \in \cS: g(r-)=0, \mbox{ $g$ is non-increasing with } \frac{d^-g}{ds}\geq -1\}$. Then, the following hold:
	\begin{enumerate}
		\item The set of extremal elements of $\cK_0^+$ is given by 
		\bean
		\{ k (s(x)-s(y))^+: k\in (0,1] \mbox{ and } y \in (\ell, r)\},&\;& \mbox{if  } \ell =-\infty;\\
		\{ k (s(x)-s(y))^+: k\in (0,1] \mbox{ and } y \in [\ell, r)\},&\;& \mbox{if  } \ell >-\infty.
		\eean
		In particular, if $g \in \cS$ is non-constant, non-decreasing and its right $s$-derivative $\frac{d^+g}{ds}$ is bounded on $\bfE$, 
		\be
		g(x)=g(\ell) + \kappa (s(x)-s(\ell)) + \int_{\ell}^r (s(x)-s(y))^+ \mu(dy), \label{e:Choquet1}
		\ee
		where $\kappa =0$ if $s(\ell)=-\infty$ with the convention that $0 \times \infty=0$, and $\mu$ is a { finite} measure with $\mu(\{\ell,r\})=0$.
		\item The set of extremal elements of $\cK_0^-$ is given by 
		\bean
		\{ k (s(y)-s(x))^+: k\in (0,1] \mbox{ and } y \in (\ell, r)\}&\;& \mbox{if  } r =\infty;\\
		\{ k (s(y)-s(x))^+: k\in (0,1] \mbox{ and } y \in (\ell, r]\}&\;& \mbox{if  } r <\infty.
		\eean
		In particular, if $g\in \cS$ is non-constant, non-increasing and its left $s$-derivative $\frac{d^-g}{ds}$ is bounded on $\bfE$, 
		\be
		g(x)=g(r) + \kappa (s(r)-s(x)) + \int_{\ell}^r (s(y)-s(x))^+ \mu(dy), \label{e:Choquet2}
		\ee
		where $\kappa =0$ if $s(r)=\infty$ with the convention that $0 \times \infty=0$, and $\mu$ is a { finite} measure with $\mu(\{\ell,r\})=0$.
	\end{enumerate}
\end{theorem}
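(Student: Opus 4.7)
The plan is to reduce the theorem to the classical representation of non-decreasing convex functions of a real variable, exploiting the well-known fact (already used implicitly in the paragraph preceding the theorem) that for a regular one-dimensional diffusion $g$ is subharmonic iff $g$ is a convex function of the scale $s$. By symmetry between $\cK_0^+$ and $\cK_0^-$ (effected by replacing $s$ by $-s$ and swapping $\ell\leftrightarrow r$), Part (2) will follow verbatim from Part (1), so I focus on the latter. Setting $\tilde{g}(u):=g(s^{-1}(u))$ for $u\in(s(\ell),s(r))$, the assumption $g\in\cK_0^+$ becomes: $\tilde{g}$ is non-negative, non-decreasing and convex on $(s(\ell),s(r))$, with $\tilde{g}(s(\ell)+)=0$ and right derivative bounded by $1$. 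The classical representation of such convex functions via their second-derivative measure yields
\[
\tilde{g}(u) \;=\; \alpha\,(u-s(\ell)) + \int_{(s(\ell),s(r))} (u-v)^{+}\,\nu(dv),
\]
with $\alpha:=\tfrac{d^+\tilde{g}}{du}(s(\ell)+)\ge 0$ and $\nu\ge 0$, subject to $\alpha+\nu((s(\ell),s(r)))\le 1$; when $s(\ell)=-\infty$ the boundary condition $\tilde{g}(s(\ell)+)=0$ forces $\alpha=0$. Pushing $\nu$ forward by $s^{-1}$ gives every $g\in\cK_0^+$ the representation
\[
g(x) \;=\; \alpha\,(s(x)-s(\ell)) + \int_{(\ell,r)} (s(x)-s(y))^{+}\,\mu(dy).
\]
When $s(\ell)>-\infty$ the linear term can be absorbed into the integral by placing an extra point mass $\alpha$ at $y=\ell$, which accounts for $y=\ell$ appearing in the extremal list exactly in this case.

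For the extremality claim, I read ``extremal element'' as generating an extremal ray of the convex cone spanned by $\cK_0^+$, truncated to $k\in(0,1]$ to remain inside $\cK_0^+$. Fix $y$ and set $f_y(x):=(s(x)-s(y))^{+}$. If $f_y=g_1+g_2$ with $g_1,g_2$ in the cone generated by $\cK_0^+$, then by non-negativity each $g_i$ vanishes on $(\ell,y]$; on $[y,r)$ the sum is affine in $s$, and convexity of each $g_i$ in $s$ forces each to be affine on $[y,r)$ as well, with $g_i(y)=0$. Hence $g_i=c_i f_y$, proving the ray through $f_y$ is extremal; intersecting with $\cK_0^+$ yields $\{k f_y:k\in(0,1]\}$. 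Conversely, if $g\in\cK_0^+$ has representing measure $\mu$ supported on at least two points, splitting $\mu=\mu_1+\mu_2$ into non-proportional parts decomposes $g$ into non-proportional pieces, ruling out extremality.

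For (\ref{e:Choquet1}), given a non-constant, non-decreasing subharmonic $g$ with $M:=\sup\tfrac{d^+g}{ds}<\infty$, the limit $g(\ell):=g(\ell+)$ exists and is finite by monotonicity and non-negativity. Applying the first step to $(g-g(\ell))/M\in\cK_0^+$ and multiplying through by $M$ yields (\ref{e:Choquet1}) with $\kappa:=M\alpha$ (automatically $0$ when $s(\ell)=-\infty$) and a finite Borel measure $\mu$ on $(\ell,r)$; the endpoint convention $\mu(\{\ell,r\})=0$ is natural because $y=r$ gives $(s(x)-s(y))^{+}\equiv 0$ on $\bfE$, while the $y=\ell$ atom has already been separated into $\kappa$.

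The main obstacle will be the bookkeeping around the four boundary subcases according as $s(\ell),s(r)$ are finite or infinite, most delicately the verification that $\tilde{g}(s(\ell)+)=0$ forces $\alpha=0$ precisely when $s(\ell)=-\infty$ (and the symmetric statement at $r$), plus the passage from the ``probability-type'' truncation $\alpha+\nu\le 1$ in $\cK_0^\pm$ to an arbitrary finite measure $\mu$ in the final representation via the rescaling by $M$. Beyond these, the argument is essentially a translation of the classical second-derivative representation of convex functions into the diffusion/scale language.
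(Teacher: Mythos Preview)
Your approach is correct and genuinely different from the paper's. The paper proceeds abstractly: after normalising $s(x)=x$, it argues that $\cK_0^+$ is a compact convex metrisable subset of $C(\bfE)$ in the locally uniform topology (via Arzel\`a--Ascoli), invokes Choquet's theorem, and then identifies the extreme points by an ad hoc splitting $g=g_1(\cdot;y)+g_2(\cdot;y)$ at each level $y$, where $g_1$ caps the right derivative at $g^+(y)$ and $g_2$ collects the excess; extremality forces $g_2(\cdot;y)=0$ for all $y$ beyond the last zero of $g$, hence $g$ is piecewise affine with a single kink. You instead bypass Choquet's theorem entirely by recognising that, after the change of variable $u=s(x)$, the problem is the classical representation of a nonnegative nondecreasing convex function on an interval via its second-derivative measure, which is elementary real analysis. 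Your extremality argument (vanishing on $(\ell,y]$ plus ``convex summing to affine implies each affine'' on $[y,r)$) is a direct verification that each $f_y$ spans an extremal ray, and the converse via splitting the representing measure is immediate. What your route buys is simplicity and self-containment: no functional-analytic machinery, and the representation \eqref{e:Choquet1} drops out in one line rather than as a corollary of the abstract integral representation. What the paper's route buys is a uniform template (compactness $+$ Choquet) that would generalise more readily to settings where the convex-function calculus is unavailable. One small point worth tightening in your write-up: when $s(\ell)=-\infty$, the claim $\alpha=0$ follows because $\tilde g\ge 0$, $\tilde g(-\infty)=0$ and $\tilde g'_+\ge \alpha$ would force $\tilde g(u)\le \tilde g(u_0)-\alpha(u_0-u)\to -\infty$; you allude to this but it deserves one explicit line.
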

\begin{proof} Only the proof of the first statement will be given as the proof is the same modulo obvious modifications for the second statement.
	
	Without loss of generality suppose that $s(x)=x$ for all $x \in \bfE$  and observe that the set $\cK_0^+$ is a compact convex metrisable subset of the space of continuous functions on $\bfE$ with locally uniform topology by a version of Arzela-Ascoli theorem. Thus, by Choquet's theorem (see, e.g., Theorem on p.14 of Phelps \cite{PhelpsChoquet}) any element of $\cK^+_0$ is representable by its extremal elements. 
	
	Next, let $g \in \cK_0^+$ be an extremal element\footnote{By convention, $g$ cannot be identically $0$.} and denote $\frac{dg^+}{dx}$ by $g^+$.  { Since, for any $z \in [\ell,r)$ and $y \in (\ell,r)$,
		\[
		g^+(z)=g^+(z)\chf_{z<y} + g^+(y)\chf_{z\geq y} + \chf_{z\geq y}(g^+(z)-g^+(y)),
		\] 
		one obtains the following decomposition from the fact that $g(x)=\int_{\ell}^x g^+(z)dz$:}
	\bean 
	g&=&g_1(\cdot;y) +g_2(\cdot;y), \quad \mbox{ where}\\
	g_1(x;y)&:=&\int_{\ell}^x \left\{g^+(z)\chf_{z<y} + g^+(y)\chf_{z\geq y}\right\}dz, \mbox{ and}\nn\\
	g_2(x;y)&:=&\int_{\ell}^x\chf_{z\geq y}(g^+(z)-g^+(y))dz.\nn
	\eean
	Observe that, for each $y \in (\ell,r)$, the functions $g_1(\cdot;y)$ and $g_2(\cdot;y)$ belong to $\cK_0^+$ { since $g^+\leq 1$ and $g^+(z)$ is nondecreasing in $z$ by convexity}. 
	
	Since $g$ is extremal, one necessarily has
	\[
	g_1(\cdot;y)=k_1(y) g \mbox{ and } g_2(\cdot;y)=k_2(y) g
	\]
	such that $k_1(y)+k_2(y)=1$ for every $y \in \bfE$.  Moreover, that $g_2(\cdot;y)=0$ on $(\ell, y]$ implies either $g=0$ on $(\ell, y]$ or $k_2(y)=0$. On the other hand, there exists a $y^* \in [\ell,r)$ such that $g(y^*)=0$ and $g(x)>0$ for $x>y^*$ since $g$ is not identically $0$. Thus,  for each $y>y^*$, $k_2(y)=0$, which in turn implies 
	\be \label{e:gextrem}
	g(x)= g_1(x;y)= g(y)+ (x-y) g^+(y), \quad x\geq y >y^*.
	\ee
	In particular, $g^+$ is constant on $(y^*,r)$. However, since $g^+$ is right continuous, one in fact obtains  $g^+(y)=g^+(y^*)$ for all $y>y^*$. Note that this in particular implies that in case of $\ell=-\infty$,  $y^*>\ell$. Indeed, since $g$ is non-negative and convex,  $g^+(-\infty)=0$ due to the fact that $g$ is non-decreasing\footnote{ $g^+(-\infty):=\lim_{y\rar -\infty}g^+(y)$ exists by the monotonicity of the right derivative.}.  Therefore, (\ref{e:gextrem}) can be rewritten as
	\[
	g(x)=g(y^*) +k(x-y^*)=k(x-y^*), \qquad x\geq y^*,
	\]
	for some $k\in (0,1]$ since $g^+\leq 1$ by hypothesis. Since $0\leq g(x)\leq g(y^*)=0$ for $x<y^*$ by the monotonicity, this proves the desired representation for the extremal elements of $\cK^+_0$ as $y^*$ can be any element of $[\ell,r)$. Note that $(s -s(r))^+\equiv 0$ is the trivial  extremal element. 
	
	Aforementioned Choquet's theorem now associates a probability measure $\hat{\mu}$  on the Borel subsets of the extremal points in $\cK_0^+$ with right derivatives converging to $1$ at $r$, denoted with $\cK_{e}^+$, to any  $g\in \cK_0^+$ with $g^+(r-)=1$ such that
	\[
	g= \int_{\cS_{e}^+} \xi\hat{\mu}(d\xi).
	\]
	Moreover, $y \mapsto (\cdot-y)^+$ is a one-to-one continuous mapping from $\bfE$ (resp. $[\ell, r)$ if $\ell >-\infty$)  onto $\cK_{e}^+$. Thus, after a change of variable, we obtain another probability measure $\tilde{\mu}$ on the Borel subsets of $\bfE$ (resp. $[\ell, r)$ if $\ell >-\infty$) such that
	\[
	g(x)= \int_{\ell}^r (x-y)^+ \tilde{\mu}(dy).
	\]
	In particular, if $\ell>-\infty$, the above can be rewritten as
	\[
	g(x)= \tilde{\mu}({\ell}) (x-\ell) + \int_{\ell+}^r (x-y)^+ \tilde{\mu}(dy),
	\]
	which in turn yields the representation (\ref{e:Choquet1}) by considering $\frac{g - g(\ell)}{g^+(r-)}$.
	
	{ To show that $\mu$ is finite, observe (\ref{e:Choquet1})  that
		\[
		g^+(x)= \kappa +\int_{\ell+}^{x}\mu(dy)=\kappa +\mu((\ell,x]).
		\]
		Thus, the claim follows from the boundedness of $g^+$.}
\end{proof}

To extend the above representation to all monotone functions in $\cS$, one needs a uniqueness result.
\begin{proposition} \label{p:Choquniq}
	The constant $\kappa$ and the measure $\mu$ appearing in the representations (\ref{e:Choquet1}) and (\ref{e:Choquet2}) are uniquely defined by $g$. In particular, for any $\ell <a <x<b<r$,
	\[
	E^x[g(X_{T_{ab}})]=g(x)+ \int_{a}^b \frac{(s(x\wedge y)-s(a))(s(b)-s(x\vee y))}{s(b)-s(a)}\mu(dy),
	\]
	and, therefore, $g(X)-B$ is a $P^x$-local martingale for any $x \in \bfE$, where
	\[
	B:=\int_{\ell}^r\mu(dy)L^y,
	\]
	and $L^y$ is the diffusion local time at $y$.
\end{proposition}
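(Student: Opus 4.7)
The plan is to proceed in three steps --- uniqueness, the optional-sampling identity, and the local martingale property --- treating the non-decreasing case (\ref{e:Choquet1}) in detail, since (\ref{e:Choquet2}) follows by symmetry. For uniqueness, differentiate (\ref{e:Choquet1}) from the right with respect to $s$. The right $s$-derivative of $x \mapsto (s(x) - s(y))^+$ equals $\chf_{x \geq y}$, so interchanging derivative and integral (justified by dominated convergence since the difference quotients are bounded by $1$ and $\mu$ is finite) together with $\mu(\{\ell, r\}) = 0$ gives
$$
\frac{d^+g}{ds}(x) = \kappa + \mu((\ell, x]), \quad x \in (\ell, r).
$$
When $s(\ell) > -\infty$, letting $x \downarrow \ell$ identifies $\kappa = \frac{d^+g}{ds}(\ell+)$; when $s(\ell) = -\infty$, $\kappa = 0$ by the stated convention. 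Consequently $\mu$ on $(\ell, r)$ is the Lebesgue--Stieltjes measure of $\frac{d^+g}{ds} - \kappa$, and $g(\ell) = \lim_{x \downarrow \ell} g(x)$ closes the identification.

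For the optional-sampling identity, substitute the representation into $E^x[g(X_{T_{ab}})]$ and commute $\mu(dy)$ with $E^x$ via Fubini (legitimate by non-negativity and finiteness of $\mu$). The constant and linear parts contribute $g(\ell) + \kappa(s(x) - s(\ell))$ because $s(X^{T_{ab}})$ is a bounded $P^x$-martingale with exit probabilities $P^x(X_{T_{ab}} = b) = \frac{s(x) - s(a)}{s(b) - s(a)}$. For the kernel $(s(\cdot) - s(y))^+$ with $y \leq a$ or $y \geq b$, the kernel is either affine on $[a, b]$ (hence harmonic) or identically zero, and so passes through the expectation; for $a < y < b$, a direct computation using the exit probabilities yields
$$
E^x[(s(X_{T_{ab}}) - s(y))^+] - (s(x) - s(y))^+ = \frac{(s(x \wedge y) - s(a))(s(b) - s(x \vee y))}{s(b) - s(a)}.
$$
Integrating against $\mu$ produces the announced formula; the endpoints $y = a, b$ may be included since the integrand vanishes there.

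For the local martingale conclusion, recall that the diffusion local time $L^y$ is the PCAF of $X$ with Revuz measure $\delta_y$. Applying (\ref{e:potentialA}) to $X$ killed at $T_{ab}$, whose potential density in $m$ is precisely $\frac{(s(x \wedge y) - s(a))(s(b) - s(x \vee y))}{s(b) - s(a)}$, yields $E^x[L^y_{T_{ab}}] = \frac{(s(x \wedge y) - s(a))(s(b) - s(x \vee y))}{s(b) - s(a)}$ for $y \in (a, b)$. Fubini then identifies $E^x[B_{T_{ab}}]$ with the integral on the right-hand side of the previous formula, whence $E^x[(g(X) - B)_{T_{ab}}] = g(x)$. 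An application of the strong Markov property at stopping times $\sigma \leq T_{ab}$ upgrades this to $E^x[(g(X) - B)_{T_{ab}} \mid \cF_\sigma] = (g(X) - B)_\sigma$, making $(g(X) - B)^{T_{ab}}$ a genuine martingale; letting $(a, b) \uparrow (\ell, r)$ gives a reducing sequence. The main subtlety I anticipate is aligning the normalization of $L^y$ with the Revuz-measure convention of (\ref{d:Revmes}) so that the potential identity delivers the stated kernel without a stray multiplicative constant; once this is fixed, the remaining steps are routine bookkeeping.
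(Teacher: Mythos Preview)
Your argument is correct, and the optional-sampling computation and the local-time potential identity are essentially what the paper does. The genuine difference is in how you establish \emph{uniqueness}. The paper argues indirectly: given two representations $(\kappa_i,\mu_i)$, it builds the PCAFs $B^i=\int L^y\,\mu_i(dy)$, shows via the optional-sampling identity that their potentials for $X$ killed at $T_{ab}$ coincide for every $a<b$, and then invokes the Blumenthal--Getoor theorem that PCAFs with equal potentials are indistinguishable, whence $\mu_1=\mu_2$ (and then $\kappa_1=\kappa_2$). You instead differentiate the representation and read off $\kappa=\frac{d^+g}{ds}(\ell+)$ and $\mu((\ell,x])=\frac{d^+g}{ds}(x)-\kappa$ directly---a purely analytic identification that avoids the potential-theoretic machinery entirely. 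Your route is more elementary and self-contained; the paper's has the side benefit that the PCAF identification needed for the local-martingale claim is already built in, so it does not need your separate strong-Markov step (which is correct, using additivity of $B$ and $T_{ab}=\sigma+T_{ab}\circ\theta_\sigma$ on $[\sigma\le T_{ab}]$, but is an extra argument). Your caveat about the normalization of $L^y$ is apt but minor: the paper simply cites the formula $E^x[L^y_{T_{ab}}]=\frac{(s(x\wedge y)-s(a))(s(b)-s(x\vee y))}{s(b)-s(a)}$ from Borodin--Salminen, which fixes the convention.
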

\begin{proof}
	See the Appendix.
\end{proof}

\begin{theorem}\label{t:choquet_gen}
	\begin{enumerate}
		\item If $g \in \cS$ is non-constant and non-decreasing, 
		\be
		g(x)=g(\ell) + \kappa (s(x)-s(\ell)) + \int_{\ell}^r (s(x)-s(y))^+ \mu(dy), \label{e:Choquetgen1}
		\ee
		where $\kappa =0$ if $s(\ell)=-\infty$ with the convention that $0 \times \infty=0$, and $\mu$ is a { Radon} measure with $\mu(\{\ell,r\})=0$.
		\item If $g \in \cS$ is non-constant and non-increasing, 
		\[
		g(x)=g(r) + \kappa (s(r)-s(x)) + \int_{\ell}^r (s(y)-s(x))^+ \mu(dy), 
		\]
		where $\kappa =0$ if $s(r)=\infty$ with the convention that $0 \times \infty=0$, and $\mu$ is a { Radon} measure with $\mu(\{\ell,r\})=0$.
		\item Consequently, $g \in \cS$ if and only if there exist { Radon} measures $(\mu_i)_{i=1}^2$ on the Borel subsets of $(\ell,r)$ and non-negative constants $\kappa_1$ and $\kappa_2$ such that
		\begin{align} 
			g(x)&=\alpha + \kappa_1 (s(x)-s(\ell)) + \kappa_2 (s(r)-s(x)) \nn \\
			&+ \int_{\ell}^r (s(x)-s(y))^+ \mu_1(dy) +\int_{\ell}^r (s(y)-s(x))^+ \mu_2(dy),  \label{e:Choquetgen}
		\end{align}
		where $\alpha\geq 0$ and $\kappa_1=0$ (resp. $\kappa_2=0$) if $s(\ell)=-\infty$ (resp. $s(r)=\infty$).
		
		Moreover, given $g\in \cS$ with decomposition  (\ref{e:Choquetgen}), $g-B$ is a $P^x$-local martingale for any $x\in \elr$, where $B$ is a PCAF with Revuz measure $\mu_1+\mu_2$.
		
		In particular, if $g\in \cS$ is not monotone and $c^*$ is such that $g(c^*)=\inf_{x \in \bfE}g(x$), the measures $\mu_1$ and $\mu_2$ and the constants $\kappa_1$ and $\kappa_2$ above can be chosen  such that $\mu_1((\ell,c^*))=\mu_2((c^*,r))=\kappa_1=\kappa_2=0$, in which case  $\alpha=g(c^*)$.
	\end{enumerate}
\end{theorem}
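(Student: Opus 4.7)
The plan is to prove parts (1) and (2) by extending Theorem \ref{t:choquet_ex} beyond the bounded-slope class $\cK_0^\pm$, deduce part (3) by decomposing an arbitrary $g\in\cS$ around a minimizer, and derive the local martingale statement from Proposition \ref{p:Choquniq} by linearity.

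Only part (1) needs discussion, as (2) is symmetric. For a non-decreasing non-constant $g\in\cS$, convexity of $g$ with respect to $s$ yields a non-decreasing right-continuous function $\frac{d^+g}{ds}:\elr\to[0,\infty)$. Let $\mu$ on $(\ell,r)$ be the associated Lebesgue--Stieltjes measure, characterised by $\mu((a,b]) = \frac{d^+g}{ds}(b) - \frac{d^+g}{ds}(a)$; finiteness of $\frac{d^+g}{ds}$ on $\elr$ makes $\mu$ Radon. Set $\kappa := \frac{d^+g}{ds}(\ell+)\geq 0$. When $s(\ell) = -\infty$, the convexity bound $g(y) \geq g(x)+\kappa(s(y)-s(x))$ combined with $g\geq 0$ forces $\kappa=0$ (otherwise the right-hand side would tend to $-\infty$ as $y\to\ell$), justifying the convention $0\times\infty=0$. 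The identity \eqref{e:Choquetgen1} then follows from the fundamental theorem for $s$-convex functions,
\[
g(x) - g(\ell+) = \int_{(\ell,x]} \frac{d^+g}{ds}(y)\,s(dy) = \int_{(\ell,x]}\bigl[\kappa + \mu((\ell,y])\bigr]\,s(dy),
\]
followed by a Fubini interchange to rewrite the $\mu$-part as $\int_\ell^r (s(x)-s(y))^+\mu(dy)$.

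For part (3), the ``if'' direction is immediate because every summand on the right-hand side of (\ref{e:Choquetgen}) is non-negative and convex in $s$, so $g(X^{T_{ab}})$ is a $P^x$-submartingale by Jensen's inequality applied to the martingale $s(X^{T_{ab}})$. For the ``only if'' direction, if $g$ is monotone appeal to parts (1) or (2). Otherwise, convexity forces $\frac{d^+g}{ds}$ to change sign, so $c^* := \inf\{x\in\elr:\frac{d^+g}{ds}(x)\geq 0\}\in\elr$ is a minimizer, with $g$ non-increasing on $(\ell,c^*]$ and non-decreasing on $[c^*,r)$. Decompose $g = g(c^*) + h_1 + h_2$ where
\[
h_1(x) := \chf_{\{x\geq c^*\}}(g(x)-g(c^*)), \qquad h_2(x) := \chf_{\{x\leq c^*\}}(g(x)-g(c^*)).
\]
A direct check confirms $h_i\in\cS$: both are non-negative and convex in $s$ (the one-sided $s$-derivatives at $c^*$ match as needed, e.g.\ the left $s$-derivative of $h_1$ at $c^*$ equals $0\leq\frac{d^+g}{ds}(c^*)$). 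Applying part (1) to $h_1$ yields a Radon measure $\mu_1$ supported on $[c^*,r)$ with $\kappa_1=0$, and part (2) applied to $h_2$ yields $\mu_2$ supported on $(\ell,c^*]$ with $\kappa_2=0$; setting $\alpha := g(c^*)$ produces the canonical representation.

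For the local martingale statement, each piece of (\ref{e:Choquetgen}) is handled by Proposition \ref{p:Choquniq}: the constant $\alpha$ is trivially a martingale; $s(X)$ is a continuous local martingale so the $\kappa_i$ terms are; and for each fixed $y$ the process $(s(X_t)-s(y))^+$ differs from a local martingale by $L^y_t$ (the extremal-element case of Proposition \ref{p:Choquniq}, essentially Tanaka's formula applied to $s(X)$). A stochastic-Fubini interchange---legitimate because $\mu_1+\mu_2$ is Radon and $L^y_t$ is jointly continuous in $y$ on compact subsets---shows that $g(X^{T_{ab}}) - B^{T_{ab}}$ is a martingale for each $[a,b]\subset\elr$, where $B_t := \int_\ell^r L^y_t (\mu_1+\mu_2)(dy)$; localising as $a\downarrow\ell$ and $b\uparrow r$ yields the claimed local martingale property. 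The main obstacle I anticipate is the boundary bookkeeping in part (1)---showing $\kappa=0$ when $s(\ell)=-\infty$ and tracing the endpoint conventions through the Fubini computation---and verifying in part (3) that the piecewise-defined $h_i$ are genuinely convex in $s$ across the join at $c^*$.
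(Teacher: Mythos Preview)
Your proposal is correct (modulo a minor sign slip: in part (1) the convexity bound should read $g(y)\leq g(x)+\kappa(s(y)-s(x))$ for $y<x$, which is what actually forces $g(y)\to-\infty$ and yields the contradiction), but the route for part (1) is genuinely different from the paper's.

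The paper proves part (1) by localisation: on each $(\ell,b]$ the slope $\frac{d^+g}{ds}$ is bounded, so Theorem \ref{t:choquet_ex} applies and yields $(\kappa_b,\mu_b)$; the uniqueness in Proposition \ref{p:Choquniq} then shows these data are consistent as $b$ varies, and one patches them into a global $(\kappa,\mu)$. Your argument bypasses Theorem \ref{t:choquet_ex} entirely by writing down $\mu$ directly as the Lebesgue--Stieltjes measure of $\frac{d^+g}{ds}$ and verifying \eqref{e:Choquetgen1} via the fundamental theorem plus Fubini. This is more elementary and self-contained for the representation itself; what it gives up is the explicit identification of the extremal elements of the convex set $\cK_0^+$, which is the conceptual payoff of the paper's Choquet-theoretic approach (and is used elsewhere, e.g.\ in the last-passage-time discussion). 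For part (3) your decomposition around $c^*$ is essentially the same as the paper's (the paper integrates the positive and negative parts of $\frac{d^+g}{ds}$, which produces exactly your $h_1,h_2$ up to the additive constant). For the local-martingale assertion the paper simply invokes Proposition \ref{p:Choquniq} on the two monotone pieces and adds; your Tanaka-plus-stochastic-Fubini argument reproves that proposition at the level of extremal elements, which is fine but slightly more work.
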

\begin{proof}
	Only the proof of the first and the last statement will be given. To this end observe that the right $s$-derivative of $g$ is bounded on $(\ell, b)$ for any $b<r$. Thus, stopping $X$ at $T_b$, one obtains a diffusion living on $(\ell,b]$ (possibly including $\ell$ in case it is an absorbing boundary) for which $g$ is a subharmonic function.  Thus, Theorem \ref{t:choquet_ex} yields coefficients $\kappa_b$ and measures $\mu_b$ indexed by $b \in (\ell,r)$ such that
	\[
	g(x)=g(\ell) + \kappa_b (s(x)-s(\ell)) + \int_{\ell}^r (s(x)-s(y))^+ \mu_b(dy) \mbox{ on } (\ell,b].
	\]
	Next, the uniqueness of the above representations via Proposition \ref{p:Choquniq} yields $\kappa_b =\kappa_{b'}$ and $\mu_b=\mu_{b'}$ on the Borel subsets of $(\ell,b]$ whenever $\ell<b<b'<r$. { In particular, $\mu_b((\ell,b])<\infty$  since $\mu_{b'}$ is a finite measure by Proposition \ref{p:Choquniq}.} Thus, there exists a measure $\mu$ on the Borel subsets of $(\ell,r)$ such that $\mu$ agrees with $\mu_b$ on the Borel subsets of $(\ell,b]$ for any $b\in (\ell,r)$, { which in turn implies $\mu$ is a Radon measure}. Therefore, there exits a $\kappa$ ($=0$ if $\ell=-\infty$) such that
	(\ref{e:Choquetgen1}) holds for all $x \in (\ell, r)$, which can be extended to hold for $x=r$ by continuity in case $r<\infty$.
	
	Finally, first observe that if $g$ is of the form (\ref{e:Choquetgen}), it is $s$-convex and non-negative, therefore $g\in \cS$. Conversely, if $g\in \cS$ is monotone, the claimed representation is a consequence of the first two parts.
	
	Next, suppose that $g$ is not monotone and let $g_1^+:=\max\{0,\frac{d^+g}{ds}\}$, $g_2^+:=\min\{0,\frac{d^+g}{ds}\}$. Pick $c^* \in (\ell,r)$ where $g$ attains its minimum and observe that $g_1^+=0$ on $(l,c^*)$ and $g_2^+=0$ on $(c^*,r)$. Now define $g_i$ for $i=1,2$ by 
	$g_1(x)=g(c^*)+\int_{c^*}^x g_1^+(y)s(dy)$ and $g_2(x)=\int^{x}_{c^*} g_2^+(y)s(dy)$. Note that  $g_1$ is non-decreasing and $g_2$ is non-increasing such that $g=g_1 +g_2$.  Moreover, the previous two parts yield the constants $\kappa_i$ and the Radon measures $\mu_i$ such that
	\begin{align*}
		g_1(x)&=g(c^*)+ \kappa_1(s(x)-s(\ell)) +\int_{\ell}^r (s(x)-s(y))^+ \mu_1(dy),\\
		g_2(x)&=\kappa_2(s(r)-s(x)) +\int_{\ell}^r (s(y)-s(x))^+ \mu_2(dy),
	\end{align*}
	
	Direct computations show that in above representations $\frac{d^+g_1(\ell+)}{ds}=\kappa_1$ and $\frac{d^-g_2(r-)}{ds}=\kappa_1$. On the other hand, by construction $\frac{d^+g_1(\ell+)}{ds}=\frac{d^-g_2(r-)}{ds}=0$. Thus,
	\[
	g_1(x)=g(c^*)+\int_{\ell}^r (s(x)-s(y))^+ \mu_1(dy) , \;\forall x\in \elr.
	\]
	Note that the above in conjunction with the fact that $g_1$ is constant on $(\ell,c^*)$ implies that $\mu_1((\ell,c^*))=0$. Similar argument shows that $\mu_2((c^*,r))=0$, which in turn yields
	\[
	g(x)=g(c^*)+\int_{\ell}^r (s(x)-s(y))^+ \mu_1(dy)+\int_{\ell}^r (s(y)-s(x))^+ \mu_2(dy),
	\]
	where $\mu_1((\ell,c^*))=\mu_2((c^*,r))=0$. 
	
	Moreover, since $g_1$ and $g_2$ are monotone, there exists CAFs $B_1$ and $B_2$ with Revuz measures $\mu_1$ and $\mu_2$, respectively, such that $g_i-B_i$ is a $P^x$-local martingale for $i=1,2$ in view of Proposition \ref{p:Choquniq}. Therefore, $B=B_1+B_2$ is a PCAF with Revuz measure $\mu_1+\mu_2$ and $g-B$ is a $P^x$-local martingale.
\end{proof}

\subsection{Connection with last passage times}
Note that $(s(X)-s(y))^+$ is a local submartingale and by means of Tanaka-Meyer formula it is easy to see that $(s(X)-s(y))^+= M + V$, where $M$ is a local martingale and $V$ is continuous and increasing such that $dV_t$ is carried by the set $\{t: s(X_t)-s(y)=0\}$. Thus, after a straightforward translation $(s(X)-s(y))^+$ can be viewed as a {\em submartingale of class $\Sigma$} using the terminology of Nikeghbali \cite{Nikeghbali06}. As observed therein, and also in a related context earlier by Az\'ema, Meyer and Yor  \cite{AMYrelmar}, this paves the way to a representation in terms of last passage times.
\begin{lemma} \label{l:LPsub}
	Consider $(y,z)\in [\ell,r]^2$  such that $y \neq z$ and define 
	\[
	\Lambda^{y,z}:=\sup\{0<t<T_z: X_t=y\},
	\]
	with the convention\footnote{For notational simplicity the superscript $z$ will be dropped if $z\in \{\ell,r\}$.} that the supremum of the empty set is $0$.  Then the following statements are valid:
	\begin{enumerate}
		\item Let $x \in \elr$ and suppose $z> x\vee y$ with $s(z)<\infty$. Then,
		\be \label{e:LPRup}
		(s(x)-s(y))^+ = (s(z)-s(y))P^x(T_z<T_{\ell}, \Lambda^{y,z}=0).
		\ee	
		Alternatively,
		\be \label{e:LPRupa}
		(s(X_t)-s(y))^+=E^x\left[(s(X_{T_z})-s(y))^+ \chf_{[\Lambda^{y,z}\leq t]}\big|\cF_t\right] \mbox{ on } [t<T_z].
		\ee
		\item Let $x \in \elr$ and suppose $z< x\wedge y$ with $s(z)>-\infty$. Then,
		\be \label{e:LPRdown}
		(s(y)-s(x))^+ = (s(y)-s(z))P^x(T_z<T_{r}, \Lambda^{y,z}=0).
		\ee	
		Alternatively,
		\be \label{e:LPRdowna}
		(s(y)-s(X_t))^+=E^x\left[(s(y)-s(X_{T_z}))^+ \chf_{[\Lambda^{y,z}\leq t]}\big|\cF_t\right] \mbox{ on } [t<T_z].
		\ee
	\end{enumerate}
\end{lemma}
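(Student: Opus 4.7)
The strategy is to prove part (1) in full; part (2) then follows by an entirely symmetric argument (formally, by the substitution $s\mapsto -s$). Within part (1), the plan is to first establish the static identity (\ref{e:LPRup}) by an explicit scale-function computation, and then to derive the dynamic identity (\ref{e:LPRupa}) from it via the strong Markov property applied at time $t$.

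For (\ref{e:LPRup}) I would split according to whether $x>y$ or $x\le y$. In the former case, continuity of the paths of $X$ forces any trajectory that reaches $\ell$ to cross $y$ first; consequently the event $\{T_z<T_\ell,\,\Lambda^{y,z}=0\}$ coincides with $\{T_z<T_y\}$, i.e.\ the event that the process exits $(y,z)$ through $z$ without visiting $y$. The classical exit-distribution formula then gives $P^x(T_z<T_y)=(s(x)-s(y))/(s(z)-s(y))$, and multiplying by $s(z)-s(y)$ yields (\ref{e:LPRup}). In the case $x\le y$, continuity precludes reaching $z>y$ without first hitting $y$, so the event $\{\Lambda^{y,z}=0\}$ is $P^x$-null and both sides of (\ref{e:LPRup}) vanish.

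For (\ref{e:LPRupa}), the key observation is that, restricted to $[t<T_z]$, the indicator $\chf_{[\Lambda^{y,z}\le t]}$ depends only on the post-$t$ trajectory—it asserts the absence of $y$-visits in $(t,T_z)$—while $(s(X_{T_z})-s(y))^+=s(z)-s(y)$ on that event since necessarily $X_{T_z}=z$. Applying the strong Markov property at $t$ therefore yields
\[
E^x\bigl[(s(X_{T_z})-s(y))^+\chf_{[\Lambda^{y,z}\le t]}\bigm|\cF_t\bigr]=(s(z)-s(y))\,P^{X_t}(T_z<T_y)\quad\text{on }[t<T_z],
\]
and (\ref{e:LPRup}) applied with $x$ replaced by $X_t$ completes the proof.

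The only subtle point I anticipate is the handling of $\{X_t=y\}$, which may carry positive probability if the transition semigroup has an atom at $y$; this is absorbed by the regularity of the underlying diffusion, since $P^y(T_y=0)=1$ implies $P^y(T_z<T_y)=0$, matching $(s(y)-s(y))^+=0$. A minor related check is the possibility $T_z=\zeta<\infty$ with $X_{\zeta-}=\ell$, which is ruled out on $\{T_z<T_y\}$ by the same continuity argument used for (\ref{e:LPRup}).
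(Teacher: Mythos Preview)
Your proof is correct and takes a more elementary route than the paper. The paper's argument is essentially a one-liner: it applies Theorem~2.5 of Profeta--Roynette--Yor (on last passage times of nonnegative continuous local martingales) to $M_t = s(z) - s(X_{t \wedge T_z})$ at the level $K = s(z) - s(y)$, which delivers both \eqref{e:LPRup} and \eqref{e:LPRupa} at once. You instead compute the exit probability $P^x(T_z < T_y)$ directly from the scale function and then lift the static identity to the dynamic one via the Markov property at time $t$. Your approach is self-contained and shows transparently why the formula holds; the paper's is compact but presupposes familiarity with the cited result.

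One small inaccuracy worth flagging: your claim that ``necessarily $X_{T_z} = z$'' on $[\Lambda^{y,z} \le t,\, t < T_z]$ fails when $X_t < y$ and $\ell$ is accessible, since the post-$t$ process can then drift to $\ell$ without ever visiting $y$ or $z$, giving $T_z = \zeta$ and $X_{T_z} = \ell$. This does not damage your conclusion---on that sub-event $(s(X_{T_z}) - s(y))^+ = 0$ and $(s(X_t) - s(y))^+ = 0$, so both sides of \eqref{e:LPRupa} vanish---but the cleanest phrasing is to observe that $(s(X_{T_z}) - s(y))^+ \chf_{[\Lambda^{y,z} = 0]} = (s(z) - s(y))\, \chf_{[T_z < T_y]}$ holds identically under $P^w$ for every $w\in(\ell,z)$, after which the Markov property gives your displayed line verbatim.
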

\begin{proof}
	Only the first statement will be proven as the other can be proven similarly. 
	
	{ 	Observe that $M=(s(z)-s(X_{t\wedge T_z}))_{t\geq 0}$ is non-negative continuous local martingale.  Thus, applying Theorem 2.5 in \cite{PRYoption} to $M$ and $K=s(z)-s(y)$ yields \eqref{e:LPRup}.  In particular, 
		\[
		\begin{split}
			(s(x)-s(y))^+&=E^x\left[(s(X_{T_z})-s(y))^+ \chf_{[\Lambda^{y,z}=0]}\right] =E^x\left[(s(z)-s(y)) \chf_{[T_z<T_{\ell},\Lambda^{y,z}=0]}\right] \\
			&=(s(z)-s(y))P^{x}(T_z<T_{\ell}, \Lambda^{y,z}=0).
		\end{split}
		\]
}
\end{proof}
A direct but interesting corollary of the above result is that when $X$ is transient and $X_{\infty}$ takes values in a singleton, certain monotone subharmonic functions can be written as a mixture of {\em Az\'ema submartingales}.
\begin{theorem} Suppose $X$ is transient and define for any $y\in \elr$ the Az\'ema submartingale
	\[
	Z^y_t=P^x(\Lambda^y\leq t|\cF_t).
	\]
	Then the following statements are valid:
	\begin{enumerate}
		\item Suppose  $s(\ell)=-\infty$. If $g \in \cS$ is non-constant and non-decreasing, 
		\be
		g(X_t)=g(\ell) + \int_{\ell}^r Z^y_t  \mu(dy), \label{e:Azema1}
		\ee
		where $\mu$ is a Radon measure with $\mu(\{\ell,r\})=0$.
		\item  Suppose  $s(r)=\infty$. If $g \in \cS$ is non-constant and non-increasing, 
		\be
		g(X_t)=g(r) +\int_{\ell}^r Z^y_t \mu(dy), \label{e:Azema2}
		\ee
		where $\mu$ is a Radon measure with $\mu(\{\ell,r\})=0$. 
	\end{enumerate}
\end{theorem}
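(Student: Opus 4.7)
The plan is to combine the Choquet integral representation of Theorem \ref{t:choquet_gen} with the last-passage identities from Lemma \ref{l:LPsub}, after first using transience and the one-sided unboundedness of the scale function to pin down the accessible boundary.

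For part (1), I first note that transience together with $s(\ell)=-\infty$ forces $s(r)<\infty$; otherwise $s$ would be unbounded in both directions and $X$ would be recurrent. Consequently $T_r<\infty$ $P^x$-a.s.\ and $X$ is absorbed at $r$. The first part of Theorem \ref{t:choquet_gen} with $s(\ell)=-\infty$ delivers a Radon measure $\tilde\mu$ on $\elr$ with $\tilde\mu(\{\ell,r\})=0$ and
\[
g(x) = g(\ell) + \int_{\ell}^{r} (s(x)-s(y))^{+}\,\tilde\mu(dy).
\]
Next, I apply equation (\ref{e:LPRupa}) with $z=r$; since $X_{T_r}=r$, this yields
\[
(s(X_t)-s(y))^{+} = (s(r)-s(y))\,Z^{y}_{t} \quad\text{on } \{t<T_r\},
\]
where I write $Z^{y}$ for $Z^{y,r}$ in line with the convention of Lemma \ref{l:LPsub}. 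The identity extends to all $t\geq 0$ by inspection: on $\{t\geq T_r\}$ both sides equal $s(r)-s(y)$, because $X_t=r$ and $\Lambda^{y,r}\leq T_r\leq t$ force $Z^{y}_{t}=1$. Substituting into the Choquet representation and invoking Tonelli gives $g(X_t)=g(\ell)+\int_{\ell}^{r}(s(r)-s(y))\,Z^{y}_{t}\,\tilde\mu(dy)$, and the claim follows upon defining $\mu(dy):=(s(r)-s(y))\tilde\mu(dy)$. This $\mu$ is a Radon measure because $s(r)-s(\cdot)$ is bounded on every compact subset of $\elr$, and it inherits $\mu(\{\ell,r\})=0$ from $\tilde\mu$.

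Part (2) is entirely symmetric: transience together with $s(r)=\infty$ forces $s(\ell)>-\infty$ and $T_\ell<\infty$ a.s., so $X$ is absorbed at $\ell$; I apply the second part of Theorem \ref{t:choquet_gen} and equation (\ref{e:LPRdowna}) with $z=\ell$, then rescale by $s(y)-s(\ell)$. The only mildly subtle points are (i) extending the last-passage identity past $T_r$ (resp.\ $T_\ell$), which follows from absorption and $\Lambda^{y,z}\leq T_z$, and (ii) verifying that the rescaled measure remains Radon; neither is a real obstacle. The substantive ingredient is the recognition that, under transience with one unbounded end of the scale function, the non-negative continuous local martingale $s(r)-s(X_{\cdot\wedge T_r})$ (respectively $s(X_{\cdot\wedge T_\ell})-s(\ell)$) vanishes at infinity, which is precisely what activates the Az\'ema-type equalities of Lemma \ref{l:LPsub}.
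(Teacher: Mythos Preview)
Your proof is correct and follows essentially the same route as the paper's: invoke the Choquet representation (\ref{e:Choquetgen1}), apply Lemma \ref{l:LPsub} with $z=r$, and rescale the representing measure by $s(r)-s(y)$. One minor inaccuracy: transience together with $s(\ell)=-\infty$ only guarantees $X_{\zeta-}=r$ (equivalently $X_\infty=r$) a.s., not $T_r<\infty$ a.s.\ (the boundary $r$ may be natural or entrance and hence unattainable in finite time), but your argument is unaffected since (\ref{e:LPRupa}) already covers all $t$ when $T_r=\infty$ and your extension to $\{t\geq T_r\}$ is then vacuous.
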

{ \begin{proof} Only the first statement will be proven as the statements are analogous. 
		
		The representation \eqref{e:Choquetgen1} yields a Radon measure $\mu_0$ that doesn't  charge $\{\ell,r\}$ such that
		\[
		g(x)=g(\ell)+\int_{\ell}^r (s(x)-s(y))^+ \mu_0(dy).
		\]
		
		Next observe that $s(r)<\infty$ by the assumed transience and consider \eqref{e:LPRupa} with $z=r$, which implies in view of $P^x(X_{\infty}=r)=1$ that 
		\[
		(s(X_t)-s(y))^+=(s(r)-s(y))P^x(\Lambda^y\leq t|\cF_t).
		\]
		Defining $\mu(dy):=(s(r)-s(y))\mu_0(dy)$ yields the claim.
	\end{proof}
	{ \begin{remark} \label{r:Azema}
			Note that a transience assumption is necessary to obtain any representation result involving Az\'ema submartingales. Indeed, if $X$ is recurrent, $\Lambda^y=\infty$, $P^x$-a.s. for each $x \in \elr$. However, this implies $Z^y\equiv 0$ for all $y\in \elr$.
	\end{remark}}
	An analogous result can be obtained for  excessive functions using the Az\'ema {\em supermartingale} $P^x(\Lambda^y>t|\cF_t)=1-Z^y_t$.
	\begin{theorem} Suppose $X$ is transient and  $h$ is excessive. 	Then the following statements are valid:
		\begin{enumerate}
			\item If $s(\ell)=-\infty$,
			\be
			h(X_t)=\kappa (s(r)-s(x)) + \int_{\ell}^r (1-Z^y_t)  \mu(dy), \label{e:Azema1h}
			\ee
			where $\mu$ is a finite measure with $\mu(\{\ell,r\})=0$ and $\kappa\geq 0$.
			\item  If $s(r)=\infty$, 
			\be
			h(X_t)=\kappa (s(x)-s(\ell)) +\int_{\ell}^r (1-Z^y_t) \mu(dy), \label{e:Azema2h}
			\ee
			where $\mu$ is a finite measure with $\mu(\{\ell,r\})=0$ and $\kappa\geq 0$. 
		\end{enumerate}
	\end{theorem}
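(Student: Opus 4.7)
The plan is to parallel the argument of the preceding theorem on Azéma \emph{submartingales}, but starting from the Riesz representation of an excessive function instead of from the Choquet decomposition of Section \ref{s:choquet}. The key conversion is again built on the last-passage-time identity of Lemma \ref{l:LPsub}, used now to recover the potential kernel $u(\cdot,y)$ rather than a term of the form $(s(\cdot)-s(y))^+$.

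First, by the classical Riesz decomposition of non-negative excessive functions of the transient diffusion $X$, I would write $h(x) = \int_\ell^r u(x,y)\,\nu(dy) + h_0(x)$ for a non-negative Radon measure $\nu$ on $\elr$ and a non-negative harmonic function $h_0$ (the greatest harmonic minorant of $h$). Under the hypothesis $s(\ell) = -\infty$, transience of $X$ forces $s(r) < \infty$ and $X_\infty = r$ $P^x$-a.s., and the formula from Section \ref{s:prelim} simplifies to $u(x,y) = s(r) - s(x \vee y)$. The Markov property combined with the transient identity $P^z(T_y < \infty) = u(z,y)/u(y,y)$ then gives
\[
1 - Z^y_t \;=\; P^x(\Lambda^y > t \mid \cF_t) \;=\; P^{X_t}(T_y < \infty) \;=\; \frac{u(X_t,y)}{s(r)-s(y)},
\]
which is the supermartingale counterpart of \eqref{e:LPRupa}. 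Setting $\mu(dy) := (s(r) - s(y))\,\nu(dy)$ and applying Fubini rewrites the potential part as $\int u(X_t,y)\,\nu(dy) = \int (1-Z^y_t)\,\mu(dy)$, which is exactly the integral on the right-hand side of \eqref{e:Azema1h}.

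Second, I would identify the harmonic minorant $h_0$. Harmonic functions of $X$ are affine in $s$, so non-negativity together with $s(\ell)=-\infty$ forces $h_0$ to lie in the non-negative cone spanned by $1$ and $s(r)-s(\cdot)$, i.e.\ $h_0(x) = \kappa(s(r) - s(x)) + \alpha$ for some $\kappa, \alpha \geq 0$. The slope term $\kappa(s(r)-s(X_t))$ is exactly the boundary contribution appearing in \eqref{e:Azema1h}; the constant piece $\alpha$ has to be absorbed into the Azéma-integral, morally as a mass of $\mu$ placed at $r$. Finiteness of $\mu = (s(r)-s(\cdot))\nu$ would then be deduced from $h(x_0) < \infty$ at any interior point $x_0$ by splitting the Riesz integral at $x_0$: for $y \geq x_0$ one has $u(x_0,y) = s(r) - s(y)$, so $\int_{x_0}^r (s(r)-s(y))\,\nu(dy) \leq h(x_0) < \infty$, and the integral over $(\ell,x_0)$ is controlled by the Radon property of $\nu$ combined with boundedness of $s(r)-s(\cdot)$ on compact subsets of $\elr$.

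The main obstacle I anticipate is the careful bookkeeping around the constant $\alpha$: it is a legitimate extremal non-negative harmonic function and is a Martin-boundary contribution at $r$, so one must verify that it can be represented within the stated form under the constraints that $\mu$ is finite and $\mu(\{\ell,r\})=0$ (e.g.\ by a limiting argument concentrating $\mu$ near $r$, using that $1 - Z^y_t \to 1$ as $y\to r$). Granted this, the remainder is routine: combining the Riesz decomposition with the last-passage identity and Fubini yields \eqref{e:Azema1h}. The second item of the theorem follows by the mirror-image argument, with \eqref{e:LPRdowna} replacing \eqref{e:LPRupa} and the roles of $\ell$ and $r$ interchanged.
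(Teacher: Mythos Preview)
Your approach is essentially the paper's: start from a Riesz-type representation of $h$, convert the potential kernel via the last-passage identity $u(X_t,y) = (s(r)-s(y))(1-Z_t^y)$, and set $\mu(dy) = (s(r)-s(y))\nu(dy)$. The paper compresses the first step by citing Borodin--Salminen (Paragraph II.5.30) directly for the form $h(x) = \kappa(s(r)-s(x)) + \int u(x,y)\mu_0(dy)$ with $\mu_0$ finite and \emph{no} separate constant term; your route via the general Riesz decomposition surfaces the constant $\alpha$ explicitly, which is why you have to worry about it and the paper does not. Your derivation of $1-Z_t^y$ via the Markov property and the hitting identity $P^{X_t}(T_y<\infty) = u(X_t,y)/u(y,y)$ is equivalent to the paper's manipulation through Lemma \ref{l:LPsub} and the algebraic identity $s(x\vee y)=(s(x)-s(y))^+ + s(y)$.

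Two concrete gaps in your execution. First, your proposed resolution of the constant $\alpha$ by ``concentrating $\mu$ near $r$'' does not literally yield a finite measure with $\mu(\{r\})=0$; in the limit it is a Dirac at $r$, which the statement forbids. The paper sidesteps this entirely because the cited representation already has harmonic part $\kappa(s(r)-s(\cdot))$ with no additive constant. Second, your finiteness argument for $\int_\ell^{x_0}(s(r)-s(y))\,\nu(dy)$ is wrong: $s(r)-s(y)$ is \emph{unbounded} on $(\ell,x_0)$ since $s(\ell)=-\infty$, and the Radon property of $\nu$ controls only compact subintervals $[a,x_0]$, leaving the tail near $\ell$ completely unaddressed. (For what it is worth, the paper does not argue this step either: it takes finiteness of $\mu_0$ from the reference but never checks that $\mu = (s(r)-s(\cdot))\mu_0$ inherits finiteness.)
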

	\begin{proof}
		As before, only the first statement will be proven, in which case $s(r)<\infty$ by the assumed transience.
		
		It follows from Paragraph 30 in Section II.5 of \cite{BorSal} that there exists a finite measure $\mu_0$ not charging $\{\ell,r\}$ and a non-negative constant $\kappa$ that
		\[
		h(x)=\kappa(s(r)-s(x)) +\int_{\ell}^r u(x,y)\mu_0(dy).
		\]
		Fix $y\in \elr$ and observe that $s(x\vee y)=(s(x)-s(y))^+ + s(y)$.  Thus, using the arguments in the proof of Lemma \ref{l:LPsub} with $z=r$, one obtains
		\[
		s(X_t\vee y) =s(y) + (s(r)-s(y)) P^x(\Lambda^y\leq t|\cF_t)=s(y)+(s(r)-s(y)) Z_t^y.
		\]
		Since $u(X_t,y)=s(r)-s(X_t\vee y)$, the claim follows.
	\end{proof}
	\begin{remark} \label{r:naive}
		The last two theorems illustrate the difficulty with the naive idea of obtaining the integral representation for subharmonic functions from that of superharmonic ones. Note that the representing measure in \eqref{e:Azema1h} is finite while $\mu$ appearing in \eqref{e:Azema1} is only locally finite.
\end{remark}}
\subsection{\Ito-Watanabe pairs}
By definition, given any subharmonic function $g$, $g(X)$ is a local submartingale and, therefore, there exists a unique PCAF $B$ with $B_0=0$ such that $g(X) -B$ is a $P^x$-local martingale for any $x \in (\ell,r)$ by a Markovian version of the Doob-Meyer decomposition (see Theorem 51.7 in \cite{GTMP}).   If $g$ is further supposed to be in $\cS^+$, then a simple integration by parts argument yields a unique $A$ such that $g(X)\exp(-A)$ is a local martingale, where $A$ is an adapted,  continuous and increasing process with $A_0=0$. Clearly, this $A$ is defined by its initial condition and $g(X_t)dA_t=dB_t$. 

The above argument gives a multiplicative decomposition for $g\in \cS^+$ as a product of a local martingale and an increasing process. The strict positivity is essential and the following summarises the above discussion.
\begin{theorem} \label{t:submvanish}
	Let $g \in \cS$ and suppose further that $g$ is not identically $0$. There exists a PCAF $A$  such that $g(X)\exp(-A)$ is a $P^x$-local martingale for every $ x\in (\ell,r)$ if and only if $g$ never vanishes on $(\ell,r)$.
\end{theorem}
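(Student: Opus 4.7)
The plan is to split the biconditional, with the sufficiency reducing to an integration-by-parts construction and the necessity to a supermartingale-vanishing argument combined with the regularity of $X$.

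For sufficiency, suppose $g>0$ on $(\ell,r)$. Because $g$ is $s$-convex (Theorem \ref{t:choquet_gen}) it is continuous, and Theorem \ref{t:choquet_gen} already supplies a PCAF $B$ with $M:=g(X)-B$ a $P^x$-local martingale for every $x$. Define
\[
A_t:=\int_0^t \frac{dB_u}{g(X_u)},
\]
which is finite on $[0,\zeta)$ since $g\circ X$ is continuous and strictly positive there. Adaptedness, continuity, and monotonicity of $A$ are transparent. The additivity axiom in Definition \ref{d:caf} follows by writing $A_{t+s}-A_t=\int_t^{t+s} g(X_u)^{-1}\,dB_u$, substituting $u=t+v$, and using $X_{t+v}=X_v\circ\theta_t$ together with $B_{t+v}-B_t=B_v\circ\theta_t$. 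For the martingale property, $\exp(-A)$ is continuous and of bounded variation, so its covariation with $g(X)$ vanishes; integration by parts then yields
\[
d\bigl(g(X_t)\exp(-A_t)\bigr)=\exp(-A_t)\,dM_t+\exp(-A_t)\bigl(dB_t-g(X_t)\,dA_t\bigr)=\exp(-A_t)\,dM_t,
\]
where the second parenthesis vanishes by construction of $A$.

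For necessity, suppose such a PCAF $A$ exists but $g(x_0)=0$ for some $x_0\in(\ell,r)$. Under $P^{x_0}$ the process $N_t:=g(X_t)\exp(-A_t)$ is a continuous, non-negative local martingale with $N_0=0$. By Fatou's lemma it is a supermartingale, so $0\leq E^{x_0}[N_t]\leq N_0=0$, forcing $N_t=0$ $P^{x_0}$-a.s.\ for each $t$, and hence $N\equiv 0$ on $[0,\zeta)$ almost surely by path-continuity. Since $A_t<\infty$ on $[0,\zeta)$, we may divide by $\exp(-A_t)$ to obtain $g(X_t)=0$ for all $t<\zeta$, a.s.\ under $P^{x_0}$. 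Fix any $y\in(\ell,r)$; regularity of $X$ gives $P^{x_0}(T_y<\zeta)>0$, and on that event $g(y)=g(X_{T_y})=0$. Hence $g\equiv 0$ on $(\ell,r)$, contradicting the hypothesis that $g$ is not identically zero.

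The only genuinely fiddly point is verifying the shift-additivity $A_{t+s}=A_t+A_s\circ\theta_t$ rigorously from that of $B$: the pathwise stochastic integral against $dB$ weighted by a continuous function of $X$ is exactly the sort of functional one expects to inherit the additive-functional structure, but it needs to be checked by the substitution argument sketched above. Everything else amounts to standard integration by parts and the classical fact that a non-negative local martingale starting at zero is indistinguishable from zero.
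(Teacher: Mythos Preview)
Your proposal is correct and matches the paper's approach. The sufficiency via $A_t=\int_0^t g(X_u)^{-1}\,dB_u$ and integration by parts is exactly the construction the paper gives in the paragraph preceding the theorem, and your necessity argument---starting the process at a zero of $g$, using that a non-negative local martingale starting at zero is identically zero, and then invoking regularity---is the same supermartingale-absorption idea the paper uses, only slightly streamlined by starting directly at $x_0$ rather than first hitting the zero set from an arbitrary point and applying the strong Markov property.
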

\begin{proof}
	What remains to be proven is the implication that the existence of an $A$ with above properties implies strict positivity of $g$. To this end suppose the closed set $Z:=\{x\in (\ell,r):g(x)=0\}$ is not empty. By the regularity of $X$ $P^x(T_Z<\zeta)>0$ for any $x \in (\ell,r)$. Since $g(X)\exp(-A)$ is a supermartingale being a non-negative local martingale, it will remain zero on $[T_Z, \zeta)$, which in turn implies $X$ does not leave the set $Z$ on $[T_Z, \zeta)$ since $\exp(-A_t)> 0$ on $[t<\zeta]$. One then deduces via the strong Markov property of $X$ that  $P^z(T_y <\zeta)=0$ for any $z \in Z$ and $y \in Z^c$. However, this contradicts the regularity of $X$.
\end{proof}


Consequently, the local submartingale $g(X)$ has a multiplicative decomposition as a product of a local martingale and an increasing process. Such multiplicative decompositions in the context of Markov process goes back to the work of \Ito { and}  Watanabe \cite{IWmult} who studied  multiplicative decompositions of supermartingales and their use in the study of subprocesses. This historical note motivates the following definition. 
\begin{definition}
	$(g,A)$ is called an {\em \Ito-Watanabe pair} if $A$ is a PCAF, $g\in \cS^+$,and $g(X)\exp(-A)$ is a $P^x$-local martingale for all $x \in \elr$.
\end{definition}

\begin{definition}
	A function $g$ is said to be {\em uniformly integrable near $\ell$ (resp. $r$)} if the family $\{g(X^{T_b}_{\tau}):\tau \mbox{ is a stopping time}\}$ is $P^x$-uniformly integrable for any $x <b$ (resp. $x>b$).  $g$ is said to be {\em semi-uniformly integrable} if it is uniformly integrable near $\ell$ or $r$. 
\end{definition}
\begin{proposition} \label{p:bcg1}
	Suppose that $(g,A)$ is an \Ito-Watanabe pair and $g$ is bounded near $\ell$ (resp. $r$). Then the following statements are valid:
	\begin{enumerate}
		\item $g(\ell)=0$ (resp. $g(r)=0$) if $\ell$ (resp. $r$) is $A$-natural or $A$-exit.
		\item $g(\ell)>0$ (resp. $g(r)>0$) if $\ell$ (resp. $r$) is $A$-entrance.
	\end{enumerate}
\end{proposition}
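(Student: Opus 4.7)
The plan is to apply optional stopping to the nonnegative local martingale $M_t := g(X_t)\exp(-A_t)$; by symmetry only the statements concerning $\ell$ need be addressed. Since $g$ is bounded by some $K$ on $(\ell, b_0]$ for some $b_0 \in \elr$, the stopped process $M^{T_{ab_0}}$ is a bounded, hence uniformly integrable, martingale for every $\ell < a < b_0$, producing the key identity
\[
g(x) \;=\; g(a)\,E^x\!\left[\exp(-A_{T_a})\chf_{\{T_a<T_{b_0}\}}\right] \;+\; g(b_0)\,E^x\!\left[\exp(-A_{T_{b_0}})\chf_{\{T_{b_0}<T_a\}}\right], \quad a<x<b_0,
\]
as well as the supermartingale inequality $g(x)\ge g(y)E^x[\exp(-A_{T_y})]$ valid for $\ell<x<y<r$.

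For part (2), note that $A$-entrance at $\ell$ forces $s(\ell)=-\infty$ (cf.\ the proof of Proposition \ref{p:hittimes}), and $g(\ell):=\lim_{x\downarrow\ell}g(x)$ exists by $s$-convexity and is finite by boundedness. Fix $y\in\elr$; Proposition \ref{p:hittimes}(1) produces $t_0,p>0$ with $\liminf_{x\to\ell}P^x(A_{T_y}<t_0)\ge p$, so $\liminf_{x\to\ell}E^x[\exp(-A_{T_y})]\ge e^{-t_0}p>0$. The supermartingale inequality then gives $g(\ell)\ge g(y)e^{-t_0}p>0$, since $g(y)>0$ as $g\in\cS^+$.

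For part (1) I pass to the limit $a\downarrow\ell$ in the identity above and analyse the two resulting terms. When $s(\ell)=-\infty$ only A-natural is possible ($A$-exit needs $s(\ell)>-\infty$ by Remark \ref{r:Aboundary}); here $P^x(T_a<T_{b_0})\to 0$ kills the first term, and Proposition \ref{p:hittimes}(2) yields $E^x[\exp(-A_{T_{b_0}})]\to 0$ as $x\downarrow\ell$, so $g(\ell)=0$. When $s(\ell)>-\infty$ and $\ell$ is $A$-natural, Theorem \ref{t:Afinite}(2) gives $A_\zeta=\infty$ a.s.\ on $\{X_{\zeta-}=\ell\}$; dominated convergence then kills the first term, and the residual identity $g(x)=g(b_0)E^x[\exp(-A_{T_{b_0}})\chf_{\{T_{b_0}<T_\ell\}}]$ is dominated by $g(b_0)(s(x)-s(\ell))/(s(b_0)-s(\ell))\to 0$ as $x\downarrow\ell$, yielding $g(\ell)=0$.

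The remaining case, $A$-exit with $s(\ell)>-\infty$, is the main obstacle: here $A_\zeta<\infty$ a.s.\ on $\{X_{\zeta-}=\ell\}$ and the DCT trick no longer forces the first term to vanish. The extra information to exploit is the $A$-exit characterisation $\mu_A((\ell,b_0))=\infty$. Assuming for contradiction $g(\ell)>0$, continuity of $g$ at $\ell$ gives $g\ge g(\ell)/2$ on some $(\ell,c)$; via the identity $\mu_B=g\mu_A$ coming from the Doob–Meyer decomposition $g(X)-B$ discussed in Section 3.2, this would force $\mu_B$ to have infinite mass near $\ell$. Combining this with the Choquet representation of Theorem \ref{t:choquet_gen} and the $s$-convexity of $g$, which constrains the representing measure via the finiteness of the one-sided $s$-derivatives on $\elr$, should yield the required contradiction; carefully controlling the non-monotone part of the Choquet decomposition is the principal technical difficulty of the argument.
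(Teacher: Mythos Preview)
Your treatment of part~(2) and of the $A$-natural subcase of part~(1) is correct and essentially the paper's argument; the paper handles the $s(\ell)$-finite and $s(\ell)$-infinite $A$-natural cases in one stroke via the bound $g(x)\le C\,E^x[\exp(-A_{T_y})]$ together with the vanishing of this Laplace transform established inside the proof of Proposition~\ref{p:hittimes}(2), but your case split reaches the same conclusion.

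The $A$-exit case, however, is genuinely incomplete, and the route you sketch does not close. Under your contradiction hypothesis $g(\ell)>0$ you correctly deduce that the Doob--Meyer compensator measure $\mu_B=g\mu_A$ has infinite mass on every $(\ell,c)$, since $A$-exit forces $\mu_A((\ell,c))=\infty$. But nothing in the Choquet picture of Theorem~\ref{t:choquet_gen} forbids this: the representing measures are only Radon on $(\ell,r)$, and the finiteness of $\frac{d^\pm g}{ds}$ at interior points says nothing about total mass near the endpoint. Concretely, with $s(x)=x$ near $\ell=0$, the function $C-2\sqrt{x}$ is bounded, positive, $s$-convex with value $C>0$ at $0$, yet its second-derivative measure $\tfrac12 y^{-3/2}\,dy$ has infinite mass at $0$ and even satisfies the $A$-exit conditions $\int_0^c y\,\mu_B(dy)<\infty$ and $\int_0^c(c-y)\,\mu_B(dy)=\infty$. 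So infinite mass of $\mu_B$ near $\ell$ is compatible with every structural constraint you invoke, and the ``principal technical difficulty'' you flag is not a detail but the whole argument.

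The paper does not attempt a contradiction of this kind. It keeps the limiting identity
\[
g(x)=g(\ell)\,f(x)+g(y)\,E^x\!\left[\exp(-A_{T_y})\chf_{[T_y<\zeta]}\right],\qquad f(x):=E^x\!\left[\exp(-A_{\zeta})\chf_{[T_y=\zeta]}\right],
\]
bounds the second term by $g(y)\frac{s(x)-s(\ell)}{s(y)-s(\ell)}\to 0$, and then argues $f(\ell+)=0$ by passing to the $h$-transform with $h(x)=\frac{s(y)-s(x)}{s(y)-s(\ell)}$ (conditioning on $T_\ell<T_y$), writing $f(x)=h(x)\,Q^x[\exp(-A_\zeta)]$, and applying a Feynman--Kac representation under $Q^x$; the $A$-exit condition $\int_\ell^b(s(b)-s(z))\mu_A(dz)=\infty$ is the analytic input that forces $f(\ell+)=0$ in that representation. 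This probabilistic-analytic step is the missing ingredient, and your measure-theoretic sketch contains no counterpart to it.
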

\begin{proof}
	Only the  statements pertaining to $\ell$ will be proven as the proofs are analogous in the other case.
	\begin{enumerate}
		\item Suppose that $\ell$ is $A$-natural. Then by the assumed uniform integrability, 
		\[
		g(x)\leq C E^x \left[\exp(-A_{T_y})\right],\; x\leq y,
		\]
		for any $y\in \elr$ and $t>0$, where $C=C\max\{g(\ell), g(y)\}$. Moreover, the expectation on the right hand side of the above converges to $0$ as $x\rar \ell$ as observed in the proof of (\ref{e:natdef}).
		
		If $\ell$ is $A$-exit, then necessarily $s(\ell)$ is finite. Similar to the above
		\be \label{e:glimitxt}
		\begin{split}
			g(x)&= g(\ell) E^x\left[\exp(-A_{\zeta}) \chf_{[T_y=\zeta]}\right]+  g(y) E^x\left[\exp(-A_{T_y}) \chf_{[T_y<\zeta]}\right], \qquad x<y\in \elr;\\
			&\leq  g(\ell) E^x\left[\exp(-A_{\zeta}) \chf_{[T_y=\zeta]}\right]+  g(y)\frac{(s(x)-s(\ell))}{s(y)-s(\ell)}.
		\end{split}
		\ee
		
		Let $f(x):= E^x\left[\exp(-A_{\zeta}) \chf_{[T_y=\zeta]})\right]$ and consider the $h$-transform by $h(x)=\frac{s(y)-s(x)}{s(y)-s(\ell)}$ that defines the probability measure $(Q^x)_{x \in (\ell,y)}$ via
		\[
		h(x) Q^x(B):=E^x(h(X_{t\wedge T_y})\chf_B), \quad B\in \cF_t.
		\]
		Then the coordinate process $X$ is a regular diffusion under $Q^x$ that lives on $(\ell,y)$ and converges to $\ell$ at its lifetime. In particular
		\[
		f(x)=h(x)Q^x(\exp(-A_{\zeta})), \quad x \in (\ell,y).
		\]
		
		Using the explicit formulas for the potential kernel and the speed measure after an $h$ transform (see Paragraph 31 on p. 33 of \cite{BorSal}), one obtains by the Feynman-Kac formula (see (8) on p. 119 of \cite{FP-Kac}) that 
		\[
		f(x)=h(x)\left(1-\int_{\ell}^y \frac{(s(x\wedge z)-s(\ell))(s(y)-s(x\vee z))}{s(y)-s(\ell)}\frac{s(y)-s(z)}{s(y)-s(x)}f(z)\mu_A(dz)\right),
		\]
		since the Revuz measure of $A$ under $Q^x$ is given by $h^2(z)\mu_A(dz)$. Since $f$ is $P$-excessive, $f(\ell+)$ exists. Moreover, the fact that $\ell$ is $A$-exit implies $f(\ell+)$ must be $0$ in view of the above representation. Therefore, taking limits as $x\rar \ell$ in (\ref{e:glimitxt}), one establishes $g(\ell)=0$.
		\item Since $s(\ell)=-\infty$, for any $y \in \elr$, $g(x)=g(y)E^x\left[\exp(-A_{T_y})\right]$ whenever $x\in (\ell,y)$. By Proposition \ref{p:hittimes} there exists $t_0>0$ such that $\lim_{x\rar \ell}P^x(A_{T_y}<t_0)>0$. Thus,
		\[
		\lim_{x\rar \ell} g(x)\geq g(y)\lim_{x\rar \ell}E^x\left[\exp(-A_{T_y})\chf_{[A_{T_y}<t_0]}\right]\geq g(y)e^{-t_0}\lim_{x\rar \ell}P^x(A_{T_y}<t_0)>0.
		\]
	\end{enumerate}
\end{proof}
\begin{remark} \label{r:Anatural}
	The above result might appear wrong at  first sight. Indeed, $g(x):=e^x+1$ is an increasing subharmonic function for Brownian motion on $\bbR$ and, yet, it does not vanish at the natural boundary $-\infty$. However, the key point here is that $-\infty$ is not $A$-natural for $A$ being the PCAF that makes $g(X)\exp(-A)$ a local martingale. A quick calculation shows that up to a scaling factor $\mu_A(dx)=(1+e^{-x})^{-1}dx$ and thus $\int_{-\infty}^b (b-x)\mu_A(dx)<\infty$. In fact, $-\infty$ is $A$-entrance, which is consistent with $g(-\infty)>0$. 
\end{remark}
Observe that if $(g,A)$ is an \Ito-Watanabe pair and $g$ is non-decreasing, Proposition \ref{p:Choquniq}, Theorem \ref{t:choquet_gen}, and the uniqueness of the Doob-Meyer decomposition imply that
\[
g(x)=g(\ell) +\kappa (s(x)-s(\ell))+ \int_{\ell}^r (s(x)-s(y))^+g(y)\mu_A(dy),
\]
with the usual disclaimer that $\kappa=0$ when $s(\ell)=-\infty$. Next, considering an arbitrary $c \in \bfE$, one obtains after straightforward calculations the following integral equation:
\be \label{e:ginteq}
g(x)=g(c)+\kappa (s(x)-s(c)) +\int_{\ell}^r \left(s(x\vee y)-s(c\vee y)\right)g(y)\mu_A(dy).
\ee
Formally speaking, given a PCAF $A$ with Revuz measure $\mu_A$, one should expect that solution of  (\ref{e:ginteq}) yields a $g \in \cS^+$  such that $(g,A)$ is an  \Ito-Watanabe pair. Analogously, if $g$ in Theorem \ref{t:choquet_gen} is non-increasing, then
\[
g(x)=g(c)+\kappa (s(c)-s(x)) +\int_{\ell}^r \left(s(c\wedge y)-s(x\wedge y)\right)g(y)\mu_A(dy).
\] 

The focus of the next section will be the analysis of integral equations whose solutions lead to subharmonic functions appearing in \Ito-Watanabe pairs given a PCAF of $X$. Note that in general there is no uniqueness for such $g$. For instance, if $A_t=t$, then the increasing and decreasing solutions of $\cA g =g$ belong to $\cS^+$ and $g(X)\exp(-A)$ are local martingales.   
\begin{remark}
	One can also consider the following killing procedure to construct the subharmonic function in a given \Ito-Watanabe pair $(g,A)$: Let $\mathbf{e}$ be a unit exponential random variable independent of $X$ and define a new diffusion $Y$ by $Y_t=X_t$ if $\mathbf{e}>A_t$ and sending $Y$ to the cemetery state on the event $[\mathbf{e}\leq A_t]$. The resulting process is a regular diffusion and it is characterised uniquely upon identifying its scale function, speed measure and killing measure. Thus, finding $g$ amounts to identifying the harmonic functions of $Y$, which typically follows from the general representation results for Markov processes via the theory of Marting boundary (see, e.g., Theorem 14.8 in \cite{ChungWalsh}). However, this requires the identification of the potential kernel of $Y$  and its limit at the Martin boundary. Since the killing measure of $Y$  is not null, the simple  representation of the potential kernel in terms of the scale function is no longer valid (see Paragraph 11 of \cite{BorSal}), and there doesn't seem to be a formula for the minimal harmonic functions for diffusions with general characteristics.
	
	Thus, one is left with finding the fundamental solutions of an ODE associated to the infinitesimal generator of $Y$ using the approach described in Paragraph 10 of \cite{BorSal} (for $\alpha=0$). The difficulty with this approach is that it requires specifications of  boundary conditions depending on the boundary behaviour of $Y$, which is in general different then that of $X$ for $A$ being a general PCAF.  The approach of this paper, on the other hand,  is direct and does not require any verification of boundary conditions.
\end{remark}
\section{Integral equations for positive subharmonic functions}\label{s:IE}
The following key lemma, whose proof is delegated to the Appendix, will be useful in proving the representation results for \Ito-Watanabe pairs.
\begin{lemma}\label{l:mainR}
	Let $\mu_A$ be a Radon measure on $\elr$ and  $c\in \elr$. Consider a function $g$ that is a solution\footnote{Any solution is implicitly assumed to be integrable in the sense that $\int_{\ell}^r |v_c(x,y)g(y)|\mu_A(dy)<\infty$ for all $x \in \elr$.} of 
	\be \label{e:genelIE}
	g(x)=g(c)+ \kappa (s(x)-s(c))+ \int_{\ell}^r v_c(x,y)g(y)\mu_A(dy),
	\ee
	where either
	\[
	v_c(x,y)= s(x\vee y)-s(c\vee y), \;  \forall (x,y)\in (\ell,r)^2\;
	\]
	or
	\[
	v_c(x,y)=s(c\wedge y)-s(x\wedge y),\; \forall (x,y)\in (\ell,r)^2.
	\]
	Define $O^+:=\{x\in \elr:g(x)>0\}$  and $O^-:=\{x\in \elr:g(x)<0\}$. Then, following statements are valid:
	\begin{enumerate}
		\item For any $\ell <a<x<b<r$
		\be 
		E^x[g(X_{T_{ab}})]=g(x)+ \int_a^b \frac{(s(x\wedge y)-s(a))(s(b)-s(x\vee y))}{s(b)-s(a)}g(y)\mu_A(dy)\label{e:EhabR}
		\ee
		\item $g$ is $s$-convex on $O^+$ and $s$-concave on $O^-$.
		\item If  $v_c(x,y)= s(x\vee y)-s(c\vee y)\, \left(\mbox{resp. }s(c\wedge y)-s(x\wedge y)\right)$,  then $g(\ell+)$ (resp. $g(r-)$) is finite if $s(\ell)$ (resp. $s(r)$) is.  Moreover, $\kappa=0$ and $g(\ell+)$ is finite if $g$ is uniformly integrable near $\ell$ (resp. $r$) and $s(\ell)=-\infty$ (resp. $s(r)=\infty$).
		\item  If $\kappa= 0$, $g$ does not change sign in $\elr$.
		\item $g$ is differentiable with respect to $s$ from  left and  right with following derivatives:
		\be \label{e:gsder}
		\begin{split}
			\frac{d^+g(x)}{ds}&=\left\{\ba{ll}\kappa +\int_{\ell}^x g(y)\mu_A(dy), & \mbox{if }v_c(x,y)= s(x\vee y)-s(c\vee y); \\
			\kappa - \int_{x+}^r g(y)\mu_A(dy), & \mbox{if }v_c(x,y)= s(c\wedge y)-s(x\wedge y).
			\ea\right.
			\\
			\frac{d^-g(x)}{ds}&=\left\{\ba{ll}\kappa +\int_{\ell}^{x-} g(y)\mu_A(dy), & \mbox{if }v_c(x,y)= s(x\vee y)-s(c\vee y); \\
			\kappa - \int_{x}^r g(y)\mu_A(dy), & \mbox{if }v_c(x,y)= s(c\wedge y)-s(x\wedge y).
			\ea  \right.
		\end{split}
		\ee
		Consequently, $g$ is differentiable with respect to $s$ at $x$ if $\mu_A(\{x\})=0$ or $g(x)=0$. Moreover, $\frac{d^+g(\ell)}{ds}$ (resp. $\frac{d^-g(r)}{ds}$) exists and satisfies the above formula whenever $g(\ell)<\infty$ (resp. $g(r)<\infty$).
	\end{enumerate} 
\end{lemma}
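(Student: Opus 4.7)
My plan is to handle the five parts in the order (1), (5), (2), (4), (3), since (2) is an immediate consequence of (1), (5) comes from an independent differentiation of the equation, and the uniform-integrability half of (3) needs both (5) and (4). The main obstacle will be the uniqueness argument in (4); the rest reduces to careful DCT and exit-law manipulations.

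For part (1), substitute \eqref{e:genelIE} into the linear combination $g(x)-\frac{s(b)-s(x)}{s(b)-s(a)}g(a)-\frac{s(x)-s(a)}{s(b)-s(a)}g(b)$. The weights sum to zero and satisfy $\alpha s(a)+\beta s(b)=s(x)$, so the $g(c)$ and the $\kappa(s(\cdot)-s(c))$ contributions both cancel, and the left-hand side equals $g(x)-E^x[g(X_{T_{ab}})]$ by the standard exit-law. The integrand on the right is the same three-point combination of $v_c$; a short case analysis using that $s(x\vee y)$ is piecewise $s$-affine in $x$ with break at $y$ shows it equals $-\frac{(s(x\wedge y)-s(a))(s(b)-s(x\vee y))}{s(b)-s(a)}$ inside $(a,b)$ and $0$ outside. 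Part (5) is obtained by termwise right- and left-differentiation of \eqref{e:genelIE} in $s(x)$; the $s$-derivatives of $s(x\vee y)$ are $\chf_{\{y\leq x\}}$ and $\chf_{\{y<x\}}$, and the corresponding difference quotients are uniformly bounded by $1$, so dominated convergence justifies the exchange with the $\mu_A$-integral. Monotone convergence handles the boundary values when $g(\ell+)$ or $g(r-)$ are finite. Part (2) is now a direct corollary of (1): continuity of $g$ (via DCT on \eqref{e:genelIE} using the $x$-uniform majorant obtained from monotonicity of $s(x\vee y)$ in $x$) makes $O^+$ and $O^-$ open, and on any $[a,b]$ inside a component of $O^+$ the kernel $G_{ab}\geq 0$ while $g\geq 0$, so \eqref{e:EhabR} forces $E^x[g(X_{T_{ab}})]\geq g(x)$, i.e. $s$-convexity; the reverse inequality on components of $O^-$ is symmetric.

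Part (4) is the delicate step. The key preliminary is that $\kappa$ does not depend on the base point: $v_c(x,y)-v_{c'}(x,y)=s(c'\vee y)-s(c\vee y)$ is a constant in $x$, so reparameterising \eqref{e:genelIE} at a new $c'$ leaves the linear-in-$s(x)$ coefficient untouched. Thus if $g$ vanished at some $c_1\in\elr$, take $c:=c_1$ to get $g=Kg$ with $(Kf)(x):=\int_\ell^r v_{c_1}(x,y)f(y)\mu_A(dy)$ under $\kappa=0$. I would verify the pointwise bound $|v_{c_1}(x,y)|\leq|s(x)-s(c_1)|$ by splitting into the three orderings of $x,c_1,y$; combined with the integrability $\int_\ell^{c_1}|g|d\mu_A<\infty$ that the original equation forces (evaluate \eqref{e:genelIE} at any $x>c_1$, where $v_c(x,y)\geq s(x)-s(c_1)>0$ on $(\ell,c_1)$), this yields $|g(x)|\leq|s(x)-s(c_1)|\int_\ell^{c_1}|g|d\mu_A$ for $x$ near $c_1$, and each iterate of $K$ picks up an extra factor of $|s-s(c_1)|$. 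A Picard iteration on a sufficiently small neighbourhood of $c_1$ therefore contracts to $g\equiv 0$ there; since the zero set of $g$ is also closed by continuity and $\elr$ is connected, $g\equiv 0$ globally. Hence a nontrivial $g$ has no zero and, by continuity, has constant sign.

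Finally for part (3): when $s(\ell)>-\infty$, $v_c(x,y)$ admits the $x$-uniform majorant $(s(c)-s(\ell))\chf_{(\ell,c)}(y)$ on $[\ell,c]$, which is $\mu_A$-integrable against $|g|$, and DCT in \eqref{e:genelIE} produces a finite $g(\ell+)$. When $s(\ell)=-\infty$ and $g$ is UI near $\ell$, Radonness of $\mu_A$ gives $\mu_A((\ell,x])\to 0$, so (5) delivers $\frac{d^+g}{ds}(x)\to\kappa$; on the other hand the exit-law identity $E^x[g(X^{T_b}_{T_a})]=g(a)\tfrac{s(b)-s(x)}{s(b)-s(a)}+g(b)\tfrac{s(x)-s(a)}{s(b)-s(a)}$ combined with UI-driven $L^1$ convergence $E^x[g(X^{T_b}_{T_a})]\to g(b)$ (since $P^x(T_a<T_b)\to 0$) forces $g(a)/(s(b)-s(a))\to 0$, i.e. $g(a)/|s(a)|\to 0$, which is incompatible with the $g\sim\kappa s$ asymptotics unless $\kappa=0$. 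With $\kappa=0$ in hand, (4) makes $g$ sign-definite and hence monotone in $s$ by (2), so Fubini on $g(x)=g(c)-\int_x^c\frac{d^+g}{ds}(u)s(du)$ yields $g(\ell+)=g(c)-\int_{(\ell,c]}g(y)(s(c)-s(y))\mu_A(dy)$, and this limit is automatically finite because monotonicity of $g$ together with its constant sign bounds the subtracted integral. The hardest step to make rigorous, as already indicated, is the self-localising Picard iteration in (4), because $\mu_A$ is only Radon and the kernel does not vanish off a small interval, but the bound $|v_{c_1}|\leq|s-s(c_1)|$ localises the iteration automatically.
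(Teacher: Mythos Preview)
Your treatment of parts (1), (2), and (5) is correct and essentially matches the paper's computations. The real issue is your Picard argument for (4).

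The claim that ``each iterate of $K$ picks up an extra factor of $|s-s(c_1)|$'' does not hold. From $|v_{c_1}(x,y)|\leq|s(x)-s(c_1)|$ and $M:=\int_\ell^{c_1}|g|\,d\mu_A<\infty$ you correctly get $|g(y)|\leq M\,|s(y)-s(c_1)|$ for $y\leq c_1$, but feeding this back into $Kg$ produces
\[
|K^2g(x)|\;\leq\; |s(x)-s(c_1)|\cdot M\int_\ell^{c_1}|s(y)-s(c_1)|\,\mu_A(dy),
\]
and the last integral can be infinite (precisely when $\ell$ is $A$-exit or $A$-natural, i.e.\ $\int_\ell^b(s(b)-s(y))\mu_A(dy)=\infty$). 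The support of $v_{c_1}(x,\cdot)$ extends all the way to $\ell$ for every $x$, so the bound $|v_{c_1}|\leq|s-s(c_1)|$ does \emph{not} localise the iteration as you assert in the last sentence; there is no contraction to exploit near $c_1$.

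The paper's route is much shorter and uses only what you already have. From (2), $g$ is $s$-convex on $O^+$ and $s$-concave on $O^-$; this forces at most one sign change (an $s$-concave arc with zero endpoints cannot be negative in between). Assuming without loss of generality that $g>0$ on $(\ell,c^*)$ and $g<0$ on $(c^*,r)$, the convexity/concavity together with $g(c^*)=0$ make $g$ nonincreasing on each piece. Now take $c<x<c^*$ directly in \eqref{e:genelIE}: since $v_c(x,\cdot)$ is supported on $(\ell,x)\subset(\ell,c^*)$ where $g\geq 0$, and $v_c(x,y)=s(x)-s(c\vee y)\geq 0$ there, one obtains $g(x)\geq g(c)$, contradicting the decrease. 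Equivalently, your own formula (5) with $\kappa=0$ gives $\frac{d^+g}{ds}(x)=\int_\ell^x g\,\mu_A\geq 0$ on $(\ell,c^*)$, the same contradiction. No iteration is needed.

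Two smaller points in (3): the limit $\int_\ell^x g\,\mu_A\to 0$ follows from $\int_\ell^c|g|\,\mu_A<\infty$, not from Radonness of $\mu_A$ (which does \emph{not} force $\mu_A((\ell,x])\to 0$); and ``sign-definite hence monotone by (2)'' is false in general (think of $x\mapsto x^2$ on $\bbR$)---the monotonicity you need comes from (5) via $\frac{d^+g}{ds}=\int_\ell^x g\,\mu_A$, which has a fixed sign once $g$ does. With (4) repaired as above, your argument for (3) then goes through.
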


\begin{remark} \label{r:gsign}
	If $\kappa\neq 0$, $g$ can change sign. Indeed, suppose $\elr=(-1,1), \, \mu_A(dy)=dy$, and $s(x)=x$. Then, $g(x)=\sinh(x)$ solves
	\[
	g(x)=\cosh(-1) \sinh(x) +\int_{\ell}^r(x\vee y-y^+)g(y)\mu_A(dy).
	\]
	Clearly, this is linked to a Brownian motion on $(-1,1)$. $\sinh(B_t)\exp(-\frac{t}{2})$ is a local martingale that hits $0$ infinitely many times. 
\end{remark}

\begin{theorem}\label{t:main1}
	Let $g$ be a  Borel measurable function on $\bfE$ and $A$ a  PCAF with Revuz measure $\mu_A$ with $\mu_A(\bfE)>0$.  Consider the sets
	\begin{align*}
		G_{\ell}&:=\left\{g\in \cS^+: g \mbox{ is u.i. near }\ell, \, \big|\frac{d^+g(\ell+)}{ds}\big|<\infty, \mbox{ and } g(r-)>\inf_{x\in \elr}g(x).\right\}\\
		G_{r}&:=\left\{g\in \cS^+: g \mbox{ is u.i. near }r, \, \big|\frac{d^-g(r-)}{ds}\big|<\infty, \mbox{ and } g(\ell+)>\inf_{x\in \elr}g(x).\right\}.
	\end{align*}
	Then the following are equivalent:
	\begin{enumerate}[label=(\arabic*),ref=(\arabic*)]
		\item \label{t:eq1} $(g,A)$ is an \Ito-Watanabe pair and $g \in G_{\ell}$ (resp.  $g \in G_{r}$)
		\item \label{t:eq2} $g$ is non-negative, not identically $0$ and  solves the integral equation
		\be \label{e:IEh}
		\begin{split}
			g(x) &=g(c)+ \kappa_1 (s(x)-s(c))+ \int_{\ell}^r\left(s(x\vee y)-s(c\vee y)\right)g(y)\mu_A(dy), \mbox{ if } g \in G_{\ell};\\
			g(x)&=g(c)+ \kappa_2 (s(x)-s(c))+ \int_{\ell}^r\left(s(c\wedge y)-s(x\wedge y)\right)g(y)\mu_A(dy), \mbox{ if } g \in G_{r},
		\end{split}
		\ee
		where $\kappa_1=0$ (resp. $\kappa_2=0$) if $s(\ell)=-\infty$ (resp. $s(r)=\infty$).
	\end{enumerate}
\end{theorem}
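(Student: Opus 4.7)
The argument splits into the two implications, and I will address only the $G_{\ell}$ case since the $G_{r}$ case follows by the symmetric argument with the two endpoints interchanged. The backbone consists of the Choquet representation from Theorem \ref{t:choquet_gen}, the structural properties of solutions of the integral equation provided by Lemma \ref{l:mainR}, the multiplicative decomposition in Theorem \ref{t:submvanish}, and the potential formula (\ref{e:potentialA}).

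For \ref{t:eq1} $\Rightarrow$ \ref{t:eq2}: starting from $g \in G_{\ell}$ with $(g,A)$ an Itô-Watanabe pair, I apply Theorem \ref{t:choquet_gen} to obtain the Choquet decomposition of $g$ with Radon measures $\mu_1,\mu_2$ and non-negative constants $\alpha,\kappa_1,\kappa_2$, together with a PCAF $B$ of Revuz measure $\mu_1+\mu_2$ such that $g(X)-B$ is a local martingale. Applying integration by parts to $g(X)\exp(-A)$ and using the Itô-Watanabe property identifies $dB_t = g(X_t)\,dA_t$, and uniqueness of the Revuz measure of a PCAF yields $\mu_1+\mu_2 = g\cdot\mu_A$. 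The three defining conditions of $G_{\ell}$ are then used in tandem to collapse the decomposition to its non-decreasing piece: the boundedness of the right $s$-derivative at $\ell+$ together with the uniform integrability near $\ell$ eliminate the contributions of $\mu_2$ and of $\kappa_2(s(r)-s(x))$ via the Choquet derivative formula, while the strict inequality $g(r-) > \inf g$ certifies that $g$ is of genuine ``left-type'' rather than fitting the reflected $G_{r}$ representation. Rewriting the simplified decomposition with a free reference point $c$ in place of $\ell$ and absorbing the resulting constant into $g(c)$ produces the first form of (\ref{e:IEh}); the boundary requirement $\kappa_1=0$ when $s(\ell)=-\infty$ is exactly what is needed to keep the term $\kappa_1(s(x)-s(\ell))$ well defined.

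For \ref{t:eq2} $\Rightarrow$ \ref{t:eq1}, Lemma \ref{l:mainR}(1) gives
\[
E^x[g(X_{T_{ab}})] = g(x) + \int_a^b \frac{(s(x\wedge y)-s(a))(s(b)-s(x\vee y))}{s(b)-s(a)}\,g(y)\,\mu_A(dy),
\]
and the integral on the right is the potential of $g\cdot\mu_A$ against the diffusion killed at $T_{ab}$. By (\ref{e:potentialA}) applied to the killed diffusion, this equals $E^x\left[\int_0^{T_{ab}} g(X_s)\,dA_s\right]$. Therefore $g(X^{T_{ab}}) - \int_0^{\cdot\wedge T_{ab}} g(X_s)\,dA_s$ is a $P^x$-martingale; a standard localisation followed by integration by parts then shows that $g(X)\exp(-A)$ is a $P^x$-local martingale, so in particular $g \in \cS$. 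To upgrade $g$ to $\cS^+$, I invoke Theorem \ref{t:submvanish}: if $g$ vanished at some interior point, the non-negative local martingale $g(X)\exp(-A)$ would stay at zero after the first hit of that point, and by the strong Markov property $X$ would be trapped in $\{g=0\}$, contradicting the regularity of $X$. Hence $(g,A)$ is an Itô-Watanabe pair.

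It remains to verify $g \in G_{\ell}$. The identity $\frac{d^+g(\ell+)}{ds}=\kappa_1$ from Lemma \ref{l:mainR}(5) gives boundedness of the right $s$-derivative at $\ell$; Lemma \ref{l:mainR}(3), together with the boundary condition on $\kappa_1$ in the case $s(\ell)=-\infty$, ensures $g(\ell+)<\infty$, so that $g$ is bounded on each $(\ell,b]$ and uniform integrability near $\ell$ follows. The step I expect to be the main technical obstacle is the strict separation $g(r-) > \inf_{\elr} g$, since nothing in the integral equation by itself manifestly prevents $g$ from settling to its infimum on some right-tail interval where $\mu_A$ happens to vanish. The argument here should read off the explicit derivative formula of Lemma \ref{l:mainR}(5) as $x$ approaches $r$, combine it with the prescribed value of $\kappa_1$ and the tail behaviour of $\mu_A$, and rule out the degenerate configuration that would instead place $g$ into $G_{r}$.
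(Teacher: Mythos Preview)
Your direction \ref{t:eq2}$\Rightarrow$\ref{t:eq1} is essentially the paper's argument: Lemma \ref{l:mainR}(1) identifies $E^x[g(X_{T_{ab}})]-g(x)$ as the potential of $g\cdot\mu_A$ for the diffusion killed at $T_{ab}$, comparison with the potential of the Doob--Meyer PCAF $B$ forces $B$ and $g\cdot A$ to be indistinguishable (Theorem IV.2.13 in \cite{BG}), and integration by parts together with Theorem \ref{t:submvanish} finishes. Your concern about the condition $g(r-)>\inf g$ is well placed; the paper's own proof does not verify it either, relegating the claim to Remark \ref{r:GlGr}.

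For \ref{t:eq1}$\Rightarrow$\ref{t:eq2} there is, however, a genuine gap in your plan. The phrases ``collapse the decomposition to its non-decreasing piece'' and ``eliminate the contributions of $\mu_2$'' suggest you expect $g$ to be non-decreasing, but elements of $G_\ell$ need not be monotone: a strictly positive $s$-convex function with $g(r-)>\inf g$ can perfectly well have an interior minimiser $c^*\in(\ell,r)$, and then $\mu_2$ is nonzero on $(\ell,c^*]$. The paper does not remove $\mu_2$; it \emph{absorbs} it. One takes the special Choquet decomposition from the last part of Theorem \ref{t:choquet_gen}(3), in which $\kappa_1=\kappa_2=0$ and $\mu_1((\ell,c^*))=\mu_2((c^*,r))=0$, and then uses the elementary identity, valid for $y\le c^*$,
\[
(s(y)-s(x))^+ = \bigl(s(x\vee y)-s(c^*\vee y)\bigr)-\bigl(s(x)-s(c^*)\bigr),
\]
to rewrite the $\mu_2$-integral as $\int_\ell^r v_{c^*}(x,y)\,\mu_2(dy)-\mu_2((\ell,c^*])\,(s(x)-s(c^*))$. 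Since $(s(x)-s(y))^+=v_{c^*}(x,y)$ for $y\ge c^*$, the two integrals combine via $\mu_1+\mu_2=g\cdot\mu_A$ into (\ref{e:IEh}) with $c=c^*$ and $\kappa_1=-\mu_2((\ell,c^*])$. The hypothesis $\big|\frac{d^+g(\ell+)}{ds}\big|<\infty$ is exactly what makes $\mu_2((\ell,c^*])$ finite (indeed $\frac{d^+g}{ds}(\ell+)=-\mu_2((\ell,c^*])$ from the derivative formula), and the hypothesis $g(r-)>\inf g$ is what guarantees $c^*<r$, so that this reference point is legitimate.
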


\begin{proof}
	See the Appendix.
\end{proof}

A special case of the above result is obtained when $c$ is replaced by $\ell$ or $r$.  Indeed, one deduces the following corollary by means of monotone convergence, { which identifies the measure appearing in the  Choquet representation from (\ref{e:Choquetgen}) with $g\cdot \mu_A$.}

\begin{corollary} \label{c:main1}
	Let $g$ be a  Borel measurable function on $\bfE$ and $A$ a  PCAF with Revuz measure $\mu_A$ with $\mu_A(\bfE)>0$.  Let  the sets $G_{\ell}$ and $G_r$ be as in Theorem \ref{t:main1}.  
	Then the following are equivalent:
	\begin{enumerate}
		\item  $(g,A)$ is an \Ito-Watanabe pair and $g \in G_{\ell}$ (resp.  $g \in G_{r}$)
		\item  $g$ is non-negative, not identically $0$ and  solves the integral equation
		\be \label{e:IEh_choquet}
		\begin{split}
			g(x) &=g(\ell)+ \kappa_1 (s(x)-s(\ell))+ \int_{\ell}^r\left(s(x)-s(y)\right)^+ g(y)\mu_A(dy), \mbox{ if } g \in G_{\ell};\\
			g(x)&=g(r)- \kappa_2 (s(r)-s(x))+ \int_{\ell}^r\left(s( y)-s(x)\right)^+g(y)\mu_A(dy), \mbox{ if } g \in G_{r},
		\end{split}
		\ee
		where $\kappa_1=0$ (resp. $\kappa_2=0$) if $s(\ell)=-\infty$ (resp. $s(r)=\infty$).
	\end{enumerate}
\end{corollary}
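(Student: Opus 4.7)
The plan is to deduce the corollary from Theorem \ref{t:main1} by passing to the limit $c \downarrow \ell$ (respectively $c \uparrow r$) in the integral equations \eqref{e:IEh}. The key algebraic identity is that, for $x \geq c$,
\[
s(x \vee y) - s(c \vee y) = (s(x)-s(y))^+ - \chf_{\{y \leq c\}}(s(c)-s(y)),
\]
as one verifies by checking the three cases $y \leq c \leq x$, $c \leq y \leq x$, and $c \leq x \leq y$. Consequently, the first equation of \eqref{e:IEh} rewrites as
\[
g(x) = g(c) + \kappa_1(s(x)-s(c)) + \int_{\ell}^r (s(x)-s(y))^+ g(y)\mu_A(dy) - \int_{\ell}^c (s(c)-s(y)) g(y)\mu_A(dy).
\]

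For the forward implication, I start from the display above (which holds for every $c \in \elr$ by Theorem \ref{t:main1}) and let $c \downarrow \ell$. Part (3) of Lemma \ref{l:mainR} ensures that $g(\ell+)$ exists and is finite, since $g$ is uniformly integrable near $\ell$ when $s(\ell)=-\infty$ and automatically otherwise; hence $g(c) \to g(\ell)$ and $\kappa_1(s(x)-s(c)) \to \kappa_1(s(x)-s(\ell))$, using the convention $\kappa_1 = 0$ when $s(\ell)=-\infty$. The integrand of the last integral, $(s(c)-s(y))\chf_{\{y \leq c\}} g(y)$, is non-negative and decreases pointwise to $0$ as $c \downarrow \ell$; since $\int_{\ell}^{c_0}(s(c_0)-s(y))g(y)\mu_A(dy)$ is finite for any fixed $c_0 \in \elr$ (which follows by evaluating the displayed identity at $x = c_0$), monotone convergence forces this last integral to vanish in the limit and yields \eqref{e:IEh_choquet}.

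Conversely, evaluating \eqref{e:IEh_choquet} at an arbitrary $c \in \elr$ and subtracting from its expression at $x$ reverses the algebraic manipulation above and recovers \eqref{e:IEh} for that $c$; Theorem \ref{t:main1} then delivers the desired conclusion. The main obstacle to this plan is justifying the passage to the limit $c \downarrow \ell$ when $s(\ell)=-\infty$, for there the boundary term $\kappa_1(s(x)-s(c))$ would blow up unless $\kappa_1=0$, and $g$ need not extend continuously to $\ell$ a priori; this is precisely where the hypothesis $g \in G_{\ell}$ (uniform integrability near $\ell$) is used, via Lemma \ref{l:mainR}(3). The case $g \in G_r$ is handled symmetrically using the dual identity $s(c \wedge y) - s(x \wedge y) = (s(y)-s(x))^+ - \chf_{\{y \geq c\}}(s(y)-s(c))$ valid for $x \leq c$, together with the limit $c \uparrow r$.
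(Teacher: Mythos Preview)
Your approach is essentially the paper's: pass to the limit $c\downarrow \ell$ in \eqref{e:IEh} using monotone convergence. The algebraic identity you state is correct, and the converse direction (subtracting the equation at $c$ from the one at $x$ to recover \eqref{e:IEh}) is fine.

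There is, however, a small gap in your justification of finiteness. You assert that $\int_{\ell}^{c_0}(s(c_0)-s(y))g(y)\mu_A(dy)<\infty$ ``follows by evaluating the displayed identity at $x=c_0$'', but that evaluation only gives you the finite quantity $g(c_0)-g(c)-\kappa_1(s(c_0)-s(c))$ as a \emph{difference} of two non-negative integrals, which does not by itself force either one to be finite. The cleaner route (and what the paper has in mind) is to apply monotone convergence directly to $\int_{\ell}^r v_c(x,y)g(y)\mu_A(dy)$: since $v_c(x,y)\uparrow (s(x)-s(y))^+$ as $c\downarrow\ell$ and $g\ge 0$, the integral increases to $\int_{\ell}^r (s(x)-s(y))^+ g(y)\mu_A(dy)$; the left-hand side $g(x)-g(c)-\kappa_1(s(x)-s(c))$ converges to a finite limit by Lemma~\ref{l:mainR}(3), so the monotone limit of the integral is finite as well. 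This simultaneously proves finiteness and the limiting equation without the intermediate split.
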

\begin{remark} \label{r:GlGr}
	Note that any solution of the first integral equation in (\ref{e:IEh_choquet}) as well as in (\ref{e:IEh}) always belongs to $G_{\ell}$.  Analogously, solutions of the second integral equation is in $G_r$.
\end{remark}
It was observed in Lemma \ref{l:mainR} that the coefficient $\kappa_1$  (resp. $\kappa_2$) in  (\ref{e:IEh_choquet}) must vanish when $s(\ell)$  (resp. $s(r)$) is infinite.  Thanks to (\ref{e:IEh_choquet}) more can be said about these coefficients as well as $g(\ell)$ and $g(r)$.
\begin{proposition}
	\label{p:kappas}
	Consider a non-negative $g$ that is not identically $0$ and solves (\ref{e:genelIE}) for some $c \in [\ell,r]$. Then the following statements are valid.
	\begin{enumerate}
		\item Suppose $v_c(x,y)= s(x\vee y)-s(c\vee y)$ for all $(x,y)\in \elr^2$ and $g$ is u.i. near $\ell$. If  $\ell$ is $A$-exit or $A$-natural,		then $g(\ell)=0$. Similarly, $\kappa=0$ if $\ell$ is $A$-entrance or $A$-natural.
		\item Suppose $v_c(x,y)= s(c\wedge y)-s(x\wedge y)$ for all $(x,y)\in \elr^2$ and $g$ is u.i. near $r$. If  $r$ is $A$-exit or $A$-natural, then $g(r)=0$. Similarly, $\kappa=0$ if $r$ is $A$-entrance or $A$-natural.
	\end{enumerate}
\end{proposition}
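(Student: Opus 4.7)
The plan is to reduce to an \Ito-Watanabe setting via Theorem \ref{t:main1}, apply Proposition \ref{p:bcg1} for the boundary vanishing claim, and then settle $\kappa=0$ by a contradiction argument built on the derivative formula in Lemma \ref{l:mainR}(5). Only part (1) needs to be handled; part (2) follows from the mirror argument with $r$ in place of $\ell$.

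First, I would invoke Theorem \ref{t:main1} (or Corollary \ref{c:main1} in the degenerate case $c\in\{\ell,r\}$) to conclude that $(g,A)$ is an \Ito-Watanabe pair with $g\in G_\ell$, so that $g\in\cS^+$ is strictly positive and $s$-convex on $\elr$ with $|d^+g(\ell+)/ds|<\infty$. Next I would verify that $g$ is bounded on a neighbourhood of $\ell$: if $s(\ell)>-\infty$ this is immediate from continuity on $[\ell,b]$, while if $s(\ell)=-\infty$ Lemma \ref{l:mainR}(3) forces $g(\ell+)<\infty$ together with $d^+g(\ell+)/ds=0$, so $s$-convexity makes $g$ non-decreasing near $\ell$ and therefore bounded above by $g(b)$ on $(\ell,b)$. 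Proposition \ref{p:bcg1}(1) then yields $g(\ell)=0$ whenever $\ell$ is $A$-exit or $A$-natural, which handles the first assertion.

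For $\kappa=0$ the key input is the identity from Lemma \ref{l:mainR}(5),
\[
\frac{d^+g(x)}{ds}=\kappa+\int_\ell^x g(y)\,\mu_A(dy),\qquad x\in\elr,
\]
whose left-hand side is finite on $\elr$. I would argue by contradiction: assume $\kappa>0$. Then $d^+g/ds\geq\kappa$ everywhere on $\elr$, and integration against $s(dy)$ from $\ell$ yields $g(y)\geq g(\ell)+\kappa(s(y)-s(\ell))\geq\kappa(s(y)-s(\ell))$ for $y>\ell$. A Fubini argument converts \eqref{e:Aboundaryx} into
\[
\int_\ell^b \mu_A((z,b))\,s(dz)=\int_\ell^b(s(y)-s(\ell))\,\mu_A(dy),
\]
which is infinite precisely when $\ell$ is $A$-entrance or $A$-natural. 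Hence
\[
\int_\ell^b g(y)\,\mu_A(dy)\geq\kappa\int_\ell^b(s(y)-s(\ell))\,\mu_A(dy)=\infty,
\]
contradicting the finiteness of $d^+g(x)/ds$ on $\elr$, so $\kappa=0$.

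The main obstacle is establishing the boundedness of $g$ near $\ell$ needed to invoke Proposition \ref{p:bcg1}; this demands a small case split on whether $s(\ell)=-\infty$, resolved via Lemma \ref{l:mainR}(3) together with the $s$-convexity of $g$. Once this is in place, the remainder is just an application of the cited derivative identity and the Fubini transformation of \eqref{e:Aboundaryx}.
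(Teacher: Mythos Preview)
Your approach mirrors the paper's: invoke Proposition~\ref{p:bcg1} via Theorem~\ref{t:main1}/Corollary~\ref{c:main1} for the vanishing claim, then for $\kappa=0$ use the lower bound $g(y)\geq k(s(y)-s(\ell))$ to force an integral to diverge. Your contradiction via $\int_\ell^b g\,d\mu_A=\infty$ (against finiteness of $d^+g/ds$) is in fact a touch cleaner than the paper's, which contradicts finiteness of $\int_\ell^{\ell+\delta}(s(\ell+\delta)-s(y))g(y)\mu_A(dy)$.

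Two loose ends deserve tightening. First, your $\kappa$ argument as written only makes sense when $s(\ell)>-\infty$: the bound $g(y)\geq\kappa(s(y)-s(\ell))$ and the Fubini identity both collapse otherwise. You already noted that Lemma~\ref{l:mainR}(3) gives $\kappa=0$ directly when $s(\ell)=-\infty$; you should invoke it explicitly at the top of the $\kappa$ argument to dispatch that case before running the contradiction. (In particular, $A$-entrance forces $s(\ell)=-\infty$, so the contradiction route is only ever needed for $A$-natural with $s(\ell)>-\infty$.) Second, you assume $\kappa>0$ rather than $\kappa\neq 0$. This is justified, but needs a sentence: in the remaining case $s(\ell)>-\infty$ the first part already gives $g(\ell)=0$, and since $d^+g(\ell)/ds=\kappa$ with $g\geq 0$ and $s$-convex, $\kappa<0$ is impossible. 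The paper makes exactly this observation.
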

\begin{proof}
	Only the proof of the first statement will be given as before.
	
	Observe that if $g$ satisfies (\ref{e:genelIE}) for some $c\in \elr$, $g(\ell)<\infty$ by Lemma \ref{l:mainR}. Thus, it satisfies (\ref{e:IEh_choquet}), too. First suppose $\int_{\ell}^{b}(s(b)-s(y))\mu_A(dy)=\infty$ for some $b\in \elr$; that is, $\ell$ is $A$-exit or $A$-natural. Then by Corollary \ref{c:main1} and  Proposition \ref{p:bcg1}, $g(\ell)=0$. 
	
	Note that if $s(\ell)=-\infty$, $\int_{\ell}^b \mu_A((z,b))s(dz)=\infty$. In this case $\kappa=0$ follows from Lemma \ref{l:mainR}. If  $s(\ell)$ is finite,  $\int_{\ell}^b \mu_A((z,b))s(dz)=\infty$ implies
	\[
	\int_{\ell}^{x}(s(y)-s(\ell))\mu_A(dy) =\infty 
	\]
	for all $x$.  Suppose that $\kappa$ is not $0$ and note that $\kappa$ must be positive if $g(\ell)=0$ to ensure that $g$ is non-negative since $g$ is $s$-convex by Lemma \ref{l:mainR} and Corollary \ref{c:main1},  and   $\frac{d^+g}{ds}(\ell)=\kappa$ by another application of Lemma \ref{l:mainR}. 
	
	Thus, one can find $\delta>0$ and $k>0$ such that  $g(x)> k (s(x)-s(\ell))$ on  $(\ell, \ell+\delta)$. This in turn implies
	\[
	\int_{\ell}^{\ell+\delta}\left(s(\ell+\delta)-s(y)\right) g(y)\mu_A(dy)\geq k\int_{\ell}^{\ell+\delta}\left(s(\ell+\delta)-s(y)\right)(s(y)-s(\ell)) \mu_A(dy) =\infty,
	\]
	which is a contradiction. Hence, $\kappa=0$.
\end{proof}

The integral equation (\ref{e:genelIE}), in particular (\ref{e:IEh_choquet}), typically needs two independent boundary or initial conditions to admit a unique solution. Fixing the value of $g(c)$ in (\ref{e:genelIE}) handles one of these conditions. However, (\ref{e:gsder}) also shows that $\frac{d^+g(\ell)}{ds}=\kappa$ when $g$ is uniformly integrable near $\ell$. That is, there is a second initial boundary condition implicit in the equation and one may expect uniqueness by fixing the value of $g(c)$. This is not always the case as the following example shows.
\begin{example}
	Consider the integral equation
	\[
	g(x)= \int_0^{\infty}(x-y)^+ g(y)\frac{2}{y^2}dy.
	\]
	Clearly, $g(x)=k x^2$ is a non-negative solution of the above for any $k \geq 0$. Note that $0$ is $A$-natural, where $A$ is the PCAF for Brownian motion on $(0,\infty)$ with Revuz measure $\mu(dy)=\frac{2}{y^2}$ dy.
\end{example}
Despite the above example a uniqueness result may be obtained for a large class of integral equations considered.
\begin{theorem}
	\label{t:uniqueIE}
	Consider the following integral equation	
	\be \label{e:ieunique}
	g(x)=a+ \kappa (s(x)-s(c)) +\int_{\ell}^r v_c(x,y)g(y)\mu_A(dy),
	\ee
	where $\mu_A$ is the Revuz measure associated with a PCAF $A$ with $\mu_A(\bfE)>0$,  $v_c(x,y)$ is either $s(x\vee y)-s(c\vee y)$ for all $(x,y) \in \elr \times \elr$ or $s(c\wedge y)-s(x \wedge y)$ for all $(x,y)\in \elr \times \elr$.  
	
	\begin{enumerate}
		\item Suppose that $c\in \elr, a\geq 0$ and $\kappa\in \bbR$. Then there exists at most one non-negative solution to (\ref{e:ieunique}) such that $g$ is uniformly integrable near $\ell$ (resp. $r$) if $v_c(x,y)=s(x\vee y)-s(c\vee y)$ (resp. $v_c(x,y)=s(c\wedge y)-s(x \wedge y))$.
		\item Suppose that $c=\ell$ (resp. $c=r$) if  $v_c(x,y)=s(x\vee y)-s(c\vee y)$ (resp. $v_c(x,y)=s(c\wedge y)-s(x \wedge y))$. Assume further that $\ell$ (resp. $r$) is {\em not} an $A$-natural boundary. Then there exists at most one non-negative solution to (\ref{e:ieunique}) such that $g$ is uniformly integrable near $\ell$ (resp. $r$) if $v_c(x,y)=s(x\vee y)-s(c\vee y)$ (resp. $v_c(x,y)=s(c\wedge y)-s(x \wedge y))$ for any $a\geq 0$ and $\kappa \geq$ (resp. $\kappa \leq 0$).
	\end{enumerate}
\end{theorem}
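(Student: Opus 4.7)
The plan is to reduce both parts to a single argument based on Theorem \ref{t:main1}. Let $g_1, g_2$ be two non-negative solutions of (\ref{e:ieunique}) with the prescribed uniform integrability property, and set $h := g_1 - g_2$. Then $h$ satisfies the homogeneous version of (\ref{e:ieunique}), i.e.\ with $a = 0$ and with the $\kappa$-coefficient replaced by $0$, and it is uniformly integrable near the appropriate boundary as the difference of two such families. By Lemma \ref{l:mainR} item 4, which applies precisely because this coefficient vanishes, $h$ has constant sign, so without loss of generality $h \geq 0$. If $h \not\equiv 0$, Theorem \ref{t:main1} applies to $h$ (with $a = 0$, $\kappa_1 = 0$) and yields $h \in G_{\ell}$ (resp.\ $G_r$); in particular $(h,A)$ is an \Ito-Watanabe pair and $h \in \cS^+$, so $h > 0$ strictly on $\elr$.

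For part 1, this already closes the argument: since $c \in \elr$ is interior and $h(c) = g_1(c) - g_2(c) = 0$, strict positivity of $h$ on $\elr$ is contradicted.

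For part 2 we have $c = \ell$ (or $r$), so $h(\ell) = 0$ and, by Lemma \ref{l:mainR} item 5, also $\frac{d^+h(\ell)}{ds} = 0$. I would then split into cases according to the $A$-classification of $\ell$. If $\ell$ is $A$-entrance, Proposition \ref{p:bcg1} (applicable because $h$ is continuous on $\elr$ with $h(\ell) = 0$, hence bounded near $\ell$) forces $h(\ell) > 0$, contradicting $h(\ell) = 0$. If $\ell$ is $A$-exit or $A$-regular, then $s(\ell) > -\infty$; optional stopping of the bounded martingale $h(X)\exp(-A)$ up to $T_a \wedge T_b$ with $\ell < a < x < b < r$, followed by $a \to \ell$ and division by $s(x) - s(\ell)$, gives
\[
\frac{h(x)}{s(x)-s(\ell)} \;=\; \frac{h(b)}{s(b)-s(\ell)}\,E^{Q^x}\!\left[\exp(-A_{T_b})\right],
\]
where $Q^x$ is the $h$-transform by $z \mapsto (s(z)-s(\ell))/(s(b)-s(\ell))$ conditioning $X$ to hit $b$ before $\ell$. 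Sending $x \to \ell$, the left side tends to $\frac{d^+h(\ell)}{ds} = 0$, forcing $\lim_{x \to \ell} E^{Q^x}[\exp(-A_{T_b})] = 0$.

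To close this final subcase, I would compute, using the transformed Revuz measure $h^2\mu_A$ and the transformed potential kernel exactly as in the proof of Proposition \ref{p:bcg1}, the explicit limit
\[
\lim_{x \to \ell} E^{Q^x}[A_{T_b}] \;=\; \frac{1}{s(b)-s(\ell)} \int_{\ell}^b (s(b)-s(z))(s(z)-s(\ell))\,\mu_A(dz),
\]
which is finite precisely when $\int_\ell^b (s(z)-s(\ell))\,\mu_A(dz) < \infty$, i.e.\ exactly when $\ell$ is $A$-exit or $A$-regular. A uniform bound on $E^{Q^x}[A_{T_b}]$ combined with Markov's inequality then yields a uniform positive lower bound on $E^{Q^x}[\exp(-A_{T_b})]$, contradicting the vanishing limit; the $r$ case follows by symmetry. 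The main obstacle is exactly this final subcase: neither Proposition \ref{p:bcg1} nor direct manipulation of the integral equation seems to force a contradiction when $\ell$ is $A$-exit or $A$-regular, and one must exploit the stochastic structure, using the non-natural classification precisely to keep $A_{T_b}$ tight under the $h$-conditioned measure as $x \to \ell$.
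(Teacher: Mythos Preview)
Your argument is correct, and for Part 2 it takes a genuinely different route from the paper.

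For Part 1, the paper proceeds by a case split on whether $s(\ell)$ is finite: when $s(\ell)=-\infty$ it uses optional stopping of $(f-g)(X)\exp(-A)$ at $T_c$ (which succeeds because $P^x(T_c<T_\ell)=1$), and when $s(\ell)>-\infty$ it uses the convexity of $h$ together with $\frac{d^+h}{ds}(\ell)=0$ and $h(c)=0$ to force $h\equiv 0$ on $[\ell,c]$, then extends by optional stopping. Your unified argument via Theorem \ref{t:main1}/Theorem \ref{t:submvanish} --- if $h\not\equiv 0$ then $h\in\cS^+$, contradicting $h(c)=0$ --- is cleaner and avoids the case split entirely.

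For Part 2 the approaches diverge substantially. The paper never invokes Theorem \ref{t:main1} or any probabilistic machinery; it works directly with the integral equation $h(x)=\int_\ell^x(s(x)-s(y))h(y)\mu_A(dy)$ and applies Gronwall's inequality. Since $\ell$ is not $A$-natural, at least one of $\int_\ell^b(s(b)-s(y))\mu_A(dy)$ or $\int_\ell^b(s(y)-s(\ell))\mu_A(dy)$ is finite; in the first case Gronwall applied to $h$ against the measure $(s(b)-s(y))\mu_A(dy)$ gives $h\equiv 0$, and in the second case Gronwall applied to $h/(s-s(\ell))$ against $(s(y)-s(\ell))\mu_A(dy)$ does the same. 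Your route instead leverages the \Ito-Watanabe structure: Proposition \ref{p:bcg1} kills the $A$-entrance case immediately, and for $A$-regular/$A$-exit you pass to the Doob transform conditioning on $T_b<T_\ell$ and exploit the tightness of $A_{T_b}$ under the conditioned law to bound $E^{Q^x}[\exp(-A_{T_b})]$ away from zero. The paper's Gronwall argument is shorter and purely analytic, requiring nothing beyond Lemma \ref{l:mainR}; yours is more conceptual and makes transparent \emph{why} the $A$-natural case is the exceptional one (it is precisely where $A_{T_b}$ fails to be tight under the conditioning), at the cost of importing the $h$-transform apparatus from Proposition \ref{p:bcg1}.
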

\begin{proof}
	Proof will be given when $v_c(x,y)=s(x\vee y)-s(c\vee y)$, the other case being analogous. First consider the case $s(\ell)=-\infty$ and $c>\ell$. Recall that $c <r$ if $s(r)=\infty$ and it, otherwise, is allowed to take the value $r$.  Let $f$ and $g$ be two solutions of (\ref{e:ieunique}) that are uniformly integrable near $\ell$. Then, for any $x \in (\ell,c)$,
	\[
	f(x)-g(x)=E^x\left[(f(X_{T_c})-g(X_{T_c}))\exp(-A_{T_c})\right]=0,
	\]
	since $P^x(T_c<T_{\ell})=1$ when $s(\ell)=-\infty$ under the hypothesis on $c$. This shows that $f$ and $g$ coincide for any $x <c$.  This completes the proof of this case if $c=r$
	
	Next suppose $c<r$ and  consider $a<x<c<y<r$. Using the semi-uniform integrability of $f-g$, one can then conclude
	\[
	0=f(x)-g(x)=E^x\left[(f(X_{T_{ay}})-g(X_{T_{ay}}))\exp(-A_{T_{ay}})\right]=E^x\left[\chf_{[T_y<T_a]}(f(y)-g(y))\exp(-A_{T_{ay}})\right].
	\]
	Hence, $f$ and $g$ coincide on $(c,r)$, too.
	
	Now, suppose $s(\ell)>-\infty$, while still assuming $c>\ell$, and $f$ and $g$ are two solutions of (\ref{e:ieunique}). Then, (\ref{e:gsder}) yields \[
	\kappa= \frac{d^+g(\ell)}{ds}= \frac{d^+f(\ell)}{ds}.
	\]
	Define $h=f-g$ and observe that $h$ satisfies
	\[
	h(x)= \int_{\ell}^rv_c(x,y) h(y)\mu_A(dy), \qquad h(c)=0.
	\]
	Then, Lemma \ref{l:mainR} shows that $h$ does not change its sign on $(\ell,r)$. Without loss of generality suppose $h\geq 0$ on $(\ell,r)$.  Another application of Lemma \ref{l:mainR} now yields $h$ is $s$-convex. Moreover,
	$\frac{d^+h(\ell)}{ds}=\frac{d^+f(\ell)}{ds}-\frac{d^+g(\ell)}{ds}=0$. However, together with the condition that $h(c)=0$, this implies $h$ must be identically $0$ on $[\ell,c]$. That is, $f$ and $g$ coincide on $[\ell,c]$. The same martingale argument above  shows that they coincide on $[\ell,r)$.
	
	Now consider the remaining case $c=\ell$ and define as above $h=f-g$, where $f$ and $g$ are two solutions. Then, $h$ solves
	\[
	h(x)=\int_{\ell}^x (s(x)-s(y))h(y)\mu_A(dy). 
	\]
	Again, we may assume $h\geq 0$ by invoking Lemma \ref{l:mainR}.  
	
	Since $\ell$ is not $A$-natural, at least one of the following integrals is finite for all $b\in \elr$:
	\[
	\int_{\ell}^b (s(b)-s(y))\mu_A(dy) \qquad \int_{\ell}^b (s(y)-s(\ell))\mu_A(dy).
	\]
	Suppose the former is finite. One has
	\[
	0\leq h(x)\leq \int_{\ell}^x h(y)(s(b)-s(y))\mu_A(dy).
	\]
	A general version of Gronwall's inequality (see Exercise V.15 in \cite{Pro}) yields $h\equiv 0$ on $[\ell, b)$, which completes the proof in this case since $b$ was arbitrary.
	
	Similarly,  in case the latter is finite, first note that $\frac{d^+h}{ds}(\ell)$ is well-defined and $0$, which in turn yields that $\frac{h}{s-s(\ell)}$ is bounded on $[\ell,b]$ for any $b\in \elr$. Therefore,
	\[
	0\leq \frac{h(x)}{s(x)-s(\ell)}\leq \int_{\ell}^x h(y)\frac{s(x)-s(y)}{s(x)-s(\ell)})\mu_A(dy)\leq \int_{\ell}^x \frac{h(y)}{s(y)-s(\ell)}(s(y)-s(\ell))\mu_A(dy).
	\]
	Another application of Gronwall inequality yields the claim.
\end{proof}

Due to Theorem \ref{t:submvanish} $(g,A)$ is an \Ito-Watanabe pair only if $g$ never vanishes on $\elr$. Thus the only cases of interest where uniqueness may be an issue occur when one considers the integral equations of (\ref{e:IEh_choquet}) with $g(\ell)=\kappa_1=0$ (resp. $g(r)=\kappa_2=0$) and the boundary is $A$-natural. The following corollary to the above theorem shows that the uniqueness is achieved as soon as the value of the function at an intermediary point is fixed.

\begin{corollary}
	\label{c:unique}
	Let $\mu_A$ be a Radon measure on $\elr$   with $\mu_A(\elr)>0$.  Let $c \in \elr$  and $a>0$. Then there exist at most one solution to the following integral equations
	\[
	\begin{split}
		g(x) &=\int_{\ell}^r\left(s(x)-s(y)\right)^+ g(y)\mu_A(dy), \mbox{ such that } g(c)=a;\\
		g(x)&=\int_{\ell}^r\left(s( y)-s(x)\right)^+g(y)\mu_A(dy), \mbox{ such that } g(c)=a
	\end{split}
	\]
\end{corollary}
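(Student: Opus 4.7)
The plan is to reduce uniqueness to showing that the difference $h:=f-g$ of any two non-negative solutions sharing $h(c)=0$ must vanish identically. This $h$ satisfies the associated homogeneous integral equation, i.e.\ (\ref{e:genelIE}) with $\kappa=0$, so Lemma \ref{l:mainR} is directly applicable. I focus on the first equation; the second is handled by the symmetric argument obtained by swapping the roles of $\ell$ and $r$.

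The key steps are as follows. First, by Lemma \ref{l:mainR}(4), $h$ does not change sign on $\elr$, so without loss of generality assume $h\geq 0$. Next, formula (\ref{e:gsder}) with $v_\ell(x,y)=s(x\vee y)-s(y)$ gives
\[
\frac{d^+ h(x)}{ds}=\int_{\ell}^x h(y)\mu_A(dy)\geq 0,
\]
so $h$ is nondecreasing. Combined with $h\geq 0$ and $h(c)=0$, this forces $h\equiv 0$ on $[\ell,c]$. For $x>c$ the integrand then vanishes on $[\ell,c]$, so
\[
h(x)=\int_c^x (s(x)-s(y))\,h(y)\mu_A(dy)\leq (s(b)-s(c))\int_c^x h(y)\mu_A(dy),\qquad x\in[c,b],
\]
for any $b\in(c,r)$. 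Since $\mu_A$ is Radon, $\mu_A([c,b])<\infty$, and the generalised Gronwall inequality (Exercise V.15 in \cite{Pro}, exactly as invoked in the proof of Theorem \ref{t:uniqueIE}) delivers $h\equiv 0$ on $[c,b]$, hence on $[c,r)$ as $b$ is arbitrary.

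The step deserving the most attention is the collapse of $h$ on $[\ell,c]$, since this is precisely what was missing from Theorem \ref{t:uniqueIE} in the $A$-natural case, as illustrated by the $g(x)=k x^2$ example immediately preceding the corollary. The interior value condition $g(c)=a$, together with the sign-constancy from Lemma \ref{l:mainR}(4) and the monotonicity read off from (\ref{e:gsder}), supplies exactly the substitute for the boundary information that fails in the $A$-natural setting. Once that is in hand, the $(c,r)$ part is routine Gronwall, so I anticipate no further obstacle beyond neatly unpacking these two ingredients from Lemma \ref{l:mainR}.
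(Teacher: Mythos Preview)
Your proof is correct but takes a different, more direct route than the paper. The paper simply observes that any solution of the displayed equation with $g(c)=a$ automatically satisfies
\[
g(x)=a+\int_{\ell}^r\bigl(s(x\vee y)-s(c\vee y)\bigr)g(y)\,\mu_A(dy),
\]
which is precisely (\ref{e:ieunique}) with an interior $c$ and $\kappa=0$, and then invokes Theorem~\ref{t:uniqueIE}(1) as a black box. You instead bypass Theorem~\ref{t:uniqueIE} and argue directly with $h=f-g$: sign-constancy from Lemma~\ref{l:mainR}(4), monotonicity read off from (\ref{e:gsder}), the collapse on $[\ell,c]$ via $h\geq 0$ nondecreasing with $h(c)=0$, and finally Gronwall on $(c,r)$. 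This avoids the optional-stopping and martingale machinery inside the proof of Theorem~\ref{t:uniqueIE}(1), and the monotonicity observation is arguably cleaner than the convexity-plus-boundary-derivative argument used there; the paper's route is shorter only because the work was already packaged in Theorem~\ref{t:uniqueIE}. One small presentational point: Lemma~\ref{l:mainR} is stated for $c\in(\ell,r)$, so rather than writing $v_\ell(x,y)=s(x\vee y)-s(y)$ it is cleaner to first recast $h$ in the form (\ref{e:genelIE}) with the interior $c$ (using $h(c)=0$), after which Lemma~\ref{l:mainR}(4) and the formula (\ref{e:gsder}) apply verbatim---their conclusions do not depend on $c$ anyway.
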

\begin{proof}
	Consider the first equation and observe that
	\[
	g(x)=a +\int_{\ell}^r\left(s(x\vee c)-s(y\vee c)\right) g(y)\mu_A(dy).
	\]
	Thus, the uniqueness follow from Theorem \ref{t:uniqueIE}.  The second equation is treated similarly since in that case
	\[
	g(x)=g(c)+\int_{\ell}^r\left(s(y\wedge c)-s(x\wedge c)\right) g(y)\mu_A(dy).
	\]
\end{proof}

The following integration-by-parts type result regarding the solutions of (\ref{e:ieunique}) will be instrumental in Section \ref{s:transform}. {  Recall that $X$ is not assumed to be a semimartingale. Thus, one needs to argue via the representation of the infinitesimal generator in terms of the scale function and the speed measure.}
\begin{theorem} \label{t:ibp}
	Suppose  $g$ solves (\ref{e:ieunique}). Then
	\[
	dg(X_t)s(X_t)=s(X_t)dg(X_t)+g(X_t)ds(X_t)+ \frac{d^-g(X_t)}{ds}dB_t,
	\]
	where $B$ is a PCAF whose Revuz measure is $2s(dy)$.
\end{theorem}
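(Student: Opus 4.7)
The plan is to derive this identity as an integration-by-parts formula by (i) representing $g(X)$ and $s(X)$ as continuous semimartingales with explicit decompositions, (ii) applying the standard product rule, and (iii) identifying the quadratic cross-variation $[g(X),s(X)]$ in terms of $B$. First, since $s$ is a scale function, $s(X^{T_{ab}})$ is a bounded $P^x$-martingale for any $\ell<a<b<r$, so $s(X)$ is a continuous local martingale on $[0,\zeta)$. On the other hand, Theorem \ref{t:main1} together with Proposition \ref{p:Choquniq} shows that for any solution $g$ of \eqref{e:ieunique}, the process $g(X_t)-\int_0^t g(X_u)\,dA_u$ is a continuous $P^x$-local martingale; hence $g(X)$ is a continuous semimartingale as well. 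The standard integration-by-parts formula then yields
\[
d\bigl(g(X_t) s(X_t)\bigr) = s(X_t)\,dg(X_t) + g(X_t)\,ds(X_t) + d[g(X),s(X)]_t,
\]
reducing the problem to identifying $d[g(X),s(X)]_t=\tfrac{d^-g}{ds}(X_t)\,dB_t$.

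Next I would identify the martingale part of $g(X)$ as a stochastic integral against $ds(X)$. By \eqref{e:gsder}, $g$ is $s$-absolutely continuous and its one-sided $s$-derivatives have bounded variation, so $g$ is a difference of two $s$-convex functions. Applying the Itô-Tanaka formula to $g\circ s^{-1}$ evaluated at the continuous local martingale $s(X)$ yields
\[
g(X_t)=g(X_0)+\int_0^t \frac{d^-g}{ds}(X_u)\,ds(X_u)+\int_{\ell}^r g(y)\,\mu_A(dy)\,L^y_t,
\]
where $L^y$ is the diffusion local time at $y$ normalised to have Revuz measure $\delta_y$ (as in Proposition \ref{p:Choquniq}), and where the second $s$-Stieltjes derivative of $g$ is $g(y)\,\mu_A(dy)$ thanks to \eqref{e:gsder}. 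The Revuz correspondence identifies $\int g(y)\mu_A(dy)L^y_t=\int_0^t g(X_u)\,dA_u$, which matches the Doob-Meyer decomposition above, so the continuous local martingale part of $g(X)$ is $g(X_0)+\int_0^\cdot \tfrac{d^-g}{ds}(X_u)\,ds(X_u)$. Since the finite-variation drift $\int g(X)\,dA$ does not contribute to cross-variation with a continuous local martingale,
\[
[g(X),s(X)]_t=\int_0^t \frac{d^-g}{ds}(X_u)\,d[s(X),s(X)]_u.
\]

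Finally, I would identify $[s(X),s(X)]$ as the PCAF $B$ of Revuz measure $2s(dy)$ by applying Itô-Tanaka once more, this time to the $s$-convex function $x\mapsto s(x)^2$. Its second $s$-Stieltjes derivative is the measure $2\,s(dy)$, so
\[
s(X_t)^2=s(X_0)^2+\int_0^t 2s(X_u)\,ds(X_u)+\int_{\ell}^r 2s(dy)\,L^y_t,
\]
whereas the semimartingale product rule applied to $s(X)\cdot s(X)$ gives $s(X_t)^2=s(X_0)^2+\int_0^t 2s(X_u)\,ds(X_u)+[s(X),s(X)]_t$. Comparison identifies $[s(X),s(X)]_t=\int 2s(dy)\,L^y_t=:B_t$, a PCAF of $X$ with Revuz measure $2s(dy)$, and the desired formula follows. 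The principal technical obstacle, given the paper's explicit caveat that $X$ itself need not be a semimartingale (e.g.\ skew Brownian motion), is to justify using Itô-Tanaka; this is resolved by performing every stochastic calculus step in $s$-coordinates, where $s(X)$ is always a continuous local martingale and where both $g$ and $s^2$ have the $s$-convex structure required by the Tanaka-Meyer expansion.
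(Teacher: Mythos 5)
Your proof is correct and follows essentially the same route as the paper: both arguments work entirely in $s$-coordinates, use the Tanaka--Meyer/It\^o--Tanaka expansion for $s$-convex functions of the continuous local martingale $s(X)$, and hinge on identifying $[s(X),s(X)]$ with the PCAF of Revuz measure $2s(dy)$ together with the formula \eqref{e:gsder} for $\frac{d^-g}{ds}$. The only cosmetic difference is that the paper obtains the Revuz measure $2s(dy)$ from a potential computation for $s^2(X)$ and then applies Tanaka to the kernels $s(\cdot\vee y)$ before integrating against $g(y)\mu_A(dy)$, whereas you apply It\^o--Tanaka directly to $s^2$ and to $g$; these are equivalent.
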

\begin{proof}
	First note that there exists a PCAF $B^0$ such that $s^2(X)-B^0$ is a local martingale by Theorem 51.2 in \cite{GTMP}. Thus, if $\ell <a<b<r$ then
	\[
	E^x[s^2(X_{T_{ab}})]=s^2(x)+E^x[B^0_{T_{ab}}]=s^2(x)+\int_a^b u_{ab}(x,y)\mu^0(dy),
	\]
	where $\mu^0$ is the Revuz measure of $B^0$ and $u_{ab}(x,y)= \frac{(s(x\wedge y)-s(a))(s(b)-s(x \vee y))}{s(b)-s(a)}$. Then repeating the same calculations in the proof of Theorem VII.3.12 in \cite{RY}, after replacing $Af(y)m(dy)$ therein by $\mu^0(dy)$, one obtains
	\[
	\frac{ds^2(x)}{ds}-\frac{ds^2(y)}{ds}=\int_x^y \mu^0(dy).
	\]
	That is, $\mu^0(dy)= 2s(dy)$. 
	
	Moreover, for any $y \in (l,r)$, \Ito-Tanaka formula (see, e.g., Theorem 68 in Chap. IV of \cite{Pro}) in conjunction with $d[s(X),s(X)]_t=dB^0_t$ yields
	\[
	ds(X_t) s(X_t \vee y)= s(X_t \vee y)ds(X_t)+ s(X_t) ds(X_t \vee y) +  \chf_{[X_t> y]}dB^0_t.
	\]
	Thus, if $v_c(x,y)=s(x\vee y)-s(c\vee y)$,
	\[
	dg(X_t)s(X_t)= s(X_t)dg(X_t)+ g(X_t)ds(X_t)+ \kappa dB^0_t+\int_{\ell}^{X_t-} g(y)\mu_A(dy) dB_t^0.
	\]
	However, $\int_{\ell}^{x-} g(y)\mu_A(dy)=\frac{d^-g(x)}{ds}$ by (\ref{e:gsder}), which	establishes the claim. The case of $v_c(x,y)=s(c \wedge y)-s(c\wedge x)$ is treated similarly. 
\end{proof}
\section{Existence of solutions and further properties} \label{s:existence}
Lemma \ref{l:mainR} establishes that any semi-uniformly integrable $g$ appearing in an \Ito-Watanabe pair is semi-bounded. Combined with the $s$-convexity property this entails in particular that $g$ is also monotone on $(\ell,c)$ and $(c,r)$ for some $c\in [\ell,r]$.  This section will construct semi-bounded and monotone solutions of (\ref{e:genelIE}) for all $c\in [\ell,r]$ whenever they exist.

\begin{theorem} \label{t:existenceNoNat} Let $\mu_A$ be a Radon measure on $\elr$ such that $\mu_A(\elr)>0$ and consider the integral equations
	\bea
	g(x) &=&a+ \kappa (s(x)-s(\ell))+ \int_{\ell}^r\left(s(x)-s(y)\right)^+ g(y)\mu_A(dy),  \label{e:choquet1}\\
	g(x)&=& a+ \kappa (s(r)-s(x))+ \int_{\ell}^r\left(s( y)-s(x)\right)^+g(y)\mu_A(dy). \label{e:choquet2}
	\eea
	\begin{enumerate}
		\item If for some $b\in \elr$
		\be \label{e:hypoex}
		\int_{\ell}^b(s(b)-s(y))\mu_A(dy)<\infty\; \left(\mbox{ resp. } \int_b^r(s(y)-s(b))\mu_A(dy)<\infty \right), 
		\ee
		there exists a non-negative nondecreasing (resp. nonincreasing) solution to (\ref{e:choquet1}) (resp. \ref{e:choquet2}) for $a>0$ and $\kappa= 0$. Moreover, under this condition a solution with the same properties exists for $a>0, \kappa>0$ if one further assumes $s(\ell)$ (resp. $s(r)$) is finite.
		\item If $s(\ell)$ (resp. $s(r)$) is finite and for some $b\in \elr$
		\[
		\int_{\ell}^b (s(y)-s(\ell))\mu_A(dy)<\infty \;  \left(\mbox{ resp. } \int_b^r (s(r)-s(y))\mu_A(dy)<\infty \right),
		\]
		there exists a non-negative nondecreasing (resp. nonincreasing) solution to (\ref{e:choquet1}) (resp. \ref{e:choquet2}) for $a=0$ and $\kappa >0$.
	\end{enumerate}
\end{theorem}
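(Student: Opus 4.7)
The plan is to construct the solution as a monotone limit of Picard iterates, in the spirit of the ``fast numerical algorithm'' advertised in the introduction. Define the Volterra-type operator
\[
Tg(x):=a+\kappa(s(x)-s(\ell))+\int_{\ell}^{x}(s(x)-s(y))\,g(y)\,\mu_A(dy),\qquad x\in\elr,
\]
which for any non-negative $g$ coincides with the right-hand side of \eqref{e:choquet1}. Start from $g_0(x):=a+\kappa(s(x)-s(\ell))$, finite on $\elr$ under the standing hypotheses (trivially if $\kappa=0$; by $s(\ell)>-\infty$ if $\kappa>0$). Since $Tg_0\geq g_0$ and $T$ is monotone on the cone of non-negative functions, the iterates $g_n:=T^ng_0$ are non-negative, nondecreasing in $x$, and nondecreasing in $n$.

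The key technical step is a Gronwall-type bound on the successive differences $h_n:=g_n-g_{n-1}$ (with $h_0:=g_0$), which satisfy $h_{n+1}(x)=\int_{\ell}^{x}(s(x)-s(y))h_n(y)\mu_A(dy)$ and inherit non-negativity and monotonicity in $x$. Under the hypothesis of part~(1), set $\Phi(x):=\int_{\ell}^{x}(s(x)-s(y))\mu_A(dy)$, so that $\Phi(b)<\infty$ for each $b\in\elr$. I claim, by induction on $n$, that
\[
h_n(x)\leq g_0(b)\,\frac{\Phi(x)^n}{n!},\qquad x\in[\ell,b].
\]
The inductive step reduces to $\int_{\ell}^{x}(s(x)-s(y))\Phi(y)^n\mu_A(dy)\leq\Phi(x)^{n+1}/(n+1)$, which follows by writing $s(x)-s(y)=\int_{y}^{x}s(dt)$, applying Fubini to pull $s(dt)$ outside, bounding $\int_{\ell}^{t}\Phi^n\,d\mu_A\leq\Phi(t)^n\mu_A([\ell,t])$ from monotonicity of $\Phi$, and using the representation $\Phi(t)=\int_{\ell}^{t}\mu_A([\ell,u))\,s(du)$ to recognise $\mu_A([\ell,t))\,s(dt)=d\Phi(t)$, whereupon the remaining integral telescopes as $\int_{\ell}^{x}\Phi^n\,d\Phi=\Phi(x)^{n+1}/(n+1)$ (valid since $\Phi$ is continuous). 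Summing, $g(x):=\lim_n g_n(x)\leq g_0(b)e^{\Phi(b)}$ is finite on $[\ell,b]$; as $b$ was arbitrary $g$ is finite on $\elr$, and monotone convergence in $n$ on both sides of $g_{n+1}=Tg_n$ forces $Tg=g$. Non-negativity and monotonicity in $x$ are inherited from the iterates.

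Under the hypothesis of part~(2), $\mu_A$ may carry infinite mass near $\ell$ and $\Phi(b)$ may be infinite; one then exploits $g_0(\ell)=0$ together with the cruder estimate $s(x)-s(y)\leq s(x)-s(\ell)$ to obtain by the analogous induction
\[
h_n(x)\leq\kappa(s(x)-s(\ell))\,\frac{\tilde\Phi(x)^n}{n!},\qquad\tilde\Phi(x):=\int_{\ell}^{x}(s(y)-s(\ell))\mu_A(dy),
\]
which converges for $x\in[\ell,b]$ since $\tilde\Phi(b)<\infty$ by hypothesis, and the rest of the argument goes through unchanged. The claims for \eqref{e:choquet2} follow by reflecting the scale and reversing the roles of $\ell$ and $r$.

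The main obstacle is the Gronwall estimate itself: the naive bound $h_{n+1}(x)\leq\Phi(b)h_n(x)$ would give only $h_n(b)\leq\Phi(b)^ng_0(b)$ and diverges whenever $\Phi(b)\geq 1$. The factorial denominator is indispensable and reflects the Volterra structure encoded in the formal identity $d(\Phi^{n+1}/(n+1))=\Phi^n\,d\Phi$; once this estimate is in place, every remaining step (MCT to obtain $Tg=g$, inheritance of non-negativity and monotonicity, and the symmetric treatment of \eqref{e:choquet2}) is routine.
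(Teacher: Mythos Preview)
Your approach is essentially the paper's own: monotone Picard iteration $g_{n+1}=Tg_n$ starting from $g_0=a+\kappa(s-s(\ell))$, with a Gronwall-type bound to force convergence. The paper simply invokes a general Stieltjes Gronwall inequality (Exercise~V.15 in \cite{Pro}) as a black box where you reprove the factorial estimate by hand, but the skeleton is identical.

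One technical point on your part~(2) argument deserves care. In part~(1) you correctly observe that $\Phi(x)=\int_\ell^x\mu_A((\ell,u])\,s(du)$ is continuous (being an integral against the atomless measure $s(du)$), so $\int_\ell^x\Phi^n\,d\Phi=\Phi(x)^{n+1}/(n+1)$ holds exactly. But in part~(2) the weight $\tilde\Phi(x)=\int_\ell^x(s(y)-s(\ell))\,\mu_A(dy)$ inherits atoms from $\mu_A$, and for a jump function the inequality actually goes the wrong way: $\int_\ell^x\tilde\Phi(y)^n\,d\tilde\Phi(y)\geq\tilde\Phi(x)^{n+1}/(n+1)$ in general, so the ``analogous induction'' does not close as written. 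The fix is routine---carry the inductive hypothesis with $\tilde\Phi(y-)$ in place of $\tilde\Phi(y)$, or bound crudely by $\tilde\Phi(b)^n$ on $[\ell,b]$ and sum, or (as the paper does) just cite Gronwall---but since you were careful to flag continuity in part~(1), the omission in part~(2) stands out.
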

\begin{proof}
	\begin{enumerate}[leftmargin=*]
		\item 	Consider (\ref{e:choquet1}) and given $a>0$ and $\kappa\geq 0$ define the operator $T$ acting on non-negative measurable functions on $[\ell,r]$ by
		\[
		Tg(x):=a+ \kappa (s(x)-s(\ell))+ \int_{\ell}^r\left(s(x)-s(y)\right)^+ g(y)\mu_A(dy).
		\]
		Observe that $Tg$ is a non-negative continuous function and $T$ is a monotone operator in the sense that $Tg\geq Tf$ on $\elr$ if $g\geq f$ is.  Set $g_0\equiv a$ and $g_n=Tg_{n-1}$ for $n\geq 1$. Since $g_1\geq a$, one deduces by induction that $g_n$ is increasing in $n$. Also note that (\ref{e:hypoex}) implies $\int_{\ell}^b (s(b)-s(x))g_n(x)\mu_A(dx)<\infty$ for all $b\in \elr$ and $n\geq 0$ since $g_n$s are continuous with $g_n(\ell)=a$. Moreover,
		\[
		g_{n}(x)\leq g_{n+1}(x)\leq a+ \kappa (s(x)-s(\ell))+ \int_{\ell}^r\left(s(x)-s(y)\right)^+ g_n(y)\mu_A(dy)
		\]
		implies that
		\[
		g_n(x)\leq \left(a+ \kappa (s(x)-s(\ell))\right) \exp\left(\int_{\ell}^b\left(s(b)-s(y)\right)\mu_A(dy) \right), \forall x\leq b,
		\]
		for any $b\in \elr$ by Gronwall's inequality. This shows that $g_n$s are uniformly bounded on $[\ell,b]$ for any $b\in \elr$. Thus, $g(x):=\lim_{n\rar \infty}g_n(x)$ exists and is finite for any $x\in [\ell, r)$. Moreover, it satisfies (\ref{e:choquet1}). That $g$ is monotone follows from Lemma \ref{l:mainR} and the previously observed relationship between (\ref{e:choquet1}) and (\ref{e:genelIE}) with $c \in \elr$. Finally note that $\kappa$ can be taken to be non-zero when $s(\ell)$ is finite in above.
		
		Construction of a solution to (\ref{e:choquet2}) is done similarly.
		
		\item The proof follows similar lines to that of the first part. Set $g_0=\kappa (s(x)-s(\ell))$, define $g_n$ analogously using (\ref{e:choquet1}) after setting $a=0$ and note that $\int_{\ell}^b g_1(x)\mu_A(dx)$ is finite since  $\int_{\ell}^b (s(x)-s(\ell))\mu_A(dx)$ is. Then it follows by induction that $\int_{\ell}^b g_n(x)\mu_A(dx)<\infty$. Moreover,
		\[
		\frac{g_n(x)}{s(x)-s(\ell)}\leq \frac{g_{n+1}(x)}{s(x)-s(\ell)}\leq \kappa + \int_{\ell}^x \frac{g_n(y)}{s(y)-s(\ell)}(s(y)-s(\ell))\mu_A(dy).
		\]
		It again follows from Gronwall's inequality that $\frac{g_n(x)}{s(x)-s(\ell)}$ is uniformly bounded near $\ell$. Thus, since $g_n$s are increasing $g=\lim_{n \rar \infty}g_n$ exists, is finite on $(\ell,r)$ that satisfies (\ref{e:choquet1}) with the stated properties by Lemma \ref{l:mainR}.
	\end{enumerate}
\end{proof}
\begin{remark}
{	Note that the proof of the above theorem gives the iterative algorithm to construct the solutions. This algorithm converges since  the corresponding equation has a unique solution as established by Theorem \ref{t:uniqueIE}. Moreover, the algorithm is easy to implement  as the underlying operator $T$ is monotone.}
\end{remark}

\begin{lemma}[Comparison lemma] Let $g_1$ and $g_2$ be two solutions of (\ref{e:choquet1}), where the coefficients $(a,\kappa)$ are replaced by $(a_1, \kappa_1)$ and $(a_2,\kappa_2)$, respectively. Suppose $a_1\leq a_2$ and $\kappa_1\leq \kappa_2$ such that at least one of the inequalities is strict. Then $g_1<g_2$ on $\elr$.
	
	An exact analogue of this comparison also holds for the solutions of (\ref{e:choquet2}).
\end{lemma}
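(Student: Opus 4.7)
The plan is to study the difference $h := g_2 - g_1$, observe that it satisfies an integral equation of the same form with non-negative coefficients at least one of which is strictly positive, and then rule out any zero of $h$ on $\elr$ via a first-zero contradiction argument combined with the right $s$-derivative formula near $\ell$ from Lemma \ref{l:mainR}.

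First, subtracting the two integral equations gives
\[
h(x) = \alpha + \beta(s(x) - s(\ell)) + \int_{\ell}^r (s(x) - s(y))^+ h(y)\,\mu_A(dy),
\]
where $\alpha := a_2 - a_1 \geq 0$, $\beta := \kappa_2 - \kappa_1 \geq 0$, and $\alpha + \beta > 0$. Since $g_1$ and $g_2$ are continuous, so is $h$, and $h(\ell) = \alpha$. Moreover, this is an equation of the form (\ref{e:genelIE}) with $c = \ell$ and $v_c(x,y) = (s(x) - s(y))^+$.

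Next I show that $h > 0$ on a right neighbourhood of $\ell$. If $\alpha > 0$, this is immediate from $h(\ell) = \alpha$ and continuity. If $\alpha = 0$, then $\beta > 0$; by the convention used in (\ref{e:choquet1}), $\beta > 0$ is incompatible with $s(\ell) = -\infty$, so necessarily $s(\ell) > -\infty$. Lemma \ref{l:mainR}(5) then yields $\frac{d^+h(\ell)}{ds} = \beta > 0$, which combined with $h(\ell) = 0$ gives $h > 0$ on some $(\ell, \ell + \delta)$.

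Now suppose, toward a contradiction, that $h(x_*) \leq 0$ for some $x_* \in \elr$. Define
\[
x_1 := \inf\{x > \ell : h(x) \leq 0\},
\]
which by the previous step satisfies $x_1 > \ell$, and by continuity lies in $(\ell, r)$ with $h > 0$ on $(\ell, x_1)$ and $h(x_1) = 0$. Evaluating the integral equation at $x_1$,
\[
0 \;=\; h(x_1) \;=\; \alpha + \beta(s(x_1) - s(\ell)) + \int_{\ell}^{x_1} (s(x_1) - s(y)) h(y)\,\mu_A(dy).
\]
The integral term is non-negative since $h > 0$ on $(\ell, x_1)$, and the sum $\alpha + \beta(s(x_1) - s(\ell))$ is strictly positive in either subcase ($\alpha > 0$; or $\alpha = 0$, $\beta > 0$ with $s(\ell) > -\infty$ and $x_1 > \ell$). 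This contradicts $h(x_1) = 0$, so $h > 0$ on $\elr$. The analogous statement for (\ref{e:choquet2}) follows by the symmetric argument at the right endpoint.

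The main technical subtlety is the subcase $\alpha = 0$: one needs the right $s$-derivative formula of Lemma \ref{l:mainR}(5) to initiate strict positivity just above $\ell$, and one must use the convention accompanying (\ref{e:choquet1}) to rule out $s(\ell) = -\infty$ whenever $\beta > 0$.
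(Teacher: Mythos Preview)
Your proof is correct and follows the same opening move as the paper: set $h=g_2-g_1$, observe that $h$ satisfies the same integral equation with coefficients $\alpha=a_2-a_1\ge 0$, $\beta=\kappa_2-\kappa_1\ge 0$, $\alpha+\beta>0$, and show $h>0$ on a right neighbourhood of $\ell$. The difference lies in how positivity is propagated. The paper invokes Lemma~\ref{l:mainR}(2) to get $s$-convexity of $h$ on $O^+$, combines this with $\frac{d^+h}{ds}(\ell)=\beta\ge 0$ to deduce $h$ is non-decreasing there, and hence stays above $h(\ell)=\alpha$ (or, in the case $\alpha=0$, stays strictly increasing). You instead run a first-zero argument: evaluate the integral equation at $x_1=\inf\{x:h(x)\le 0\}$ and read off a contradiction since every term on the right is non-negative and at least one is strictly positive. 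Your route is slightly more elementary in that it uses only part~(5) of Lemma~\ref{l:mainR} (and only in the subcase $\alpha=0$), bypassing the convexity machinery; the paper's route, in exchange, gives the extra structural information that $h$ is $s$-convex and monotone on the whole interval.
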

\begin{proof}
	Let $g:=g_2-g_1$ and observe that $g$ satisfies
	\[
	g(x)=a_2-a_1 + (\kappa_2-\kappa_1)(s(x)-s(\ell)) +\int_{\ell}^r\left(s(x)-s(y)\right)^+ g(y)\mu_A(dy).
	\]
	If $a_2-a_1>0$,  Lemma \ref{l:mainR} yields $g$ is strictly positive and convex on a right neighbourhood of $\ell$. Since $\kappa_2-\kappa_1\geq 0$, this entails $g$ is strictly positive and convex by the same lemma. 
	
	If $a_2=a_1$ and $\kappa_2-\kappa_1>0$, then Lemma \ref{l:mainR} shows that $g$ is strictly increasing on a right neighbourhood of $\ell$. Consequently, it is convex and increasing on the same neighbourhood and, hence, on the whole $\elr$.
\end{proof}

In view of the above comparison result, Theorem \ref{t:existenceNoNat} and Theorem \ref{t:uniqueIE}  the following corollary is now immediate.
\begin{corollary}
	\label{c:existenceNoNat} Let $\mu_A$ be a Radon measure on $\elr$ such that $\mu_A(\elr)>0$. Then the following statements are valid:
	\begin{enumerate}
		\item Suppose that $\ell$ (resp. $r$) is $A$-regular. Then there exists a unique solution to (\ref{e:choquet1}) (resp. \ref{e:choquet2}) for any $a\geq 0$ and $\kappa \geq 0$. The solution is non-decreasing (resp. non-increasing) and never vanishes on $\elr$ if at least one of $a$ and $\kappa$ is non-zero.
		\item Suppose that $\ell$ (resp. $r$) is $A$-entrance. Then there exists a unique solution to (\ref{e:choquet1}) (resp. \ref{e:choquet2}) for any $a\geq 0$ and $\kappa= 0$. The solution is non-decreasing (resp. non-increasing) and never vanishes on $\elr$ if at least one of $a$ and $\kappa$ is non-zero.
		\item Suppose that $\ell$ (resp. $r$) is $A$-exit. Then there exists a unique solution to (\ref{e:choquet1}) (resp. \ref{e:choquet2}) for any $a=0$ and $\kappa\geq 0$. The solution is non-decreasing (resp. non-increasing) and never vanishes on $\elr$ if at least one of $a$ and $\kappa$ is non-zero.
	\end{enumerate}
\end{corollary}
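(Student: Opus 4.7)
The plan is to collect pieces already in place: existence from Theorem \ref{t:existenceNoNat}, uniqueness from Theorem \ref{t:uniqueIE}(2), monotonicity of the constructed solution, and strict positivity via direct inspection of (\ref{e:choquet1}) or the Comparison Lemma. The only genuinely new bookkeeping is to translate the $A$-boundary trichotomy of Definition \ref{d:CAFbc} into the integrability hypotheses consumed by those two theorems.

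First I would rewrite the two defining integrals of Definition \ref{d:CAFbc}. A Fubini argument gives the identity
\[
\int_{\ell}^b \mu_A((z,b))\, s(dz) \;=\; \int_{\ell}^b (s(y)-s(\ell))\,\mu_A(dy)
\]
whenever $s(\ell)>-\infty$ (and both sides are infinite otherwise by Remark \ref{r:Aboundary}). Consequently (\ref{e:Aboundaryx}) is precisely the hypothesis appearing in part (2) of Theorem \ref{t:existenceNoNat}, while (\ref{e:Aboundarye}) is the hypothesis (\ref{e:hypoex}) of part (1). One then reads off: $\ell$ being $A$-regular forces $s(\ell)$ finite and supplies both hypotheses; $\ell$ being $A$-entrance supplies only the hypothesis of part (1); $\ell$ being $A$-exit forces $s(\ell)$ finite and supplies only the hypothesis of part (2).

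Plugging these into Theorem \ref{t:existenceNoNat} case by case yields the existence half. For $\ell$ $A$-regular, part (1) with its finite-$s(\ell)$ addendum covers $a>0,\kappa\geq 0$, part (2) covers $a=0,\kappa>0$, and $g\equiv 0$ handles $a=\kappa=0$; for $\ell$ $A$-entrance, part (1) applies for every $a\geq 0$ with $\kappa=0$; for $\ell$ $A$-exit, part (2) applies for $a=0$ and $\kappa\geq 0$. In each construction the iterative scheme produces a non-decreasing $g$ that is uniformly bounded on compact sub-intervals containing $\ell$ (or, in the $A$-exit case, dominated by a multiple of $s(\cdot)-s(\ell)$ near $\ell$), hence in particular semi-uniformly integrable near $\ell$.

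For uniqueness I would invoke Theorem \ref{t:uniqueIE}(2) at $c=\ell$: since $\ell$ is not $A$-natural in any of the three scenarios and the constructed $g$ is non-negative and u.i. near $\ell$ with $\kappa\geq 0$, the hypotheses are met and uniqueness within this class follows. Strict positivity on $\elr$ whenever $(a,\kappa)\neq(0,0)$ is then immediate from (\ref{e:choquet1}): the integral on the right-hand side is non-negative, so $g(x)\geq a+\kappa(s(x)-s(\ell))>0$ for every $x\in(\ell,r)$; the same conclusion also drops out of the Comparison Lemma applied against the trivial solution $(a_1,\kappa_1)=(0,0)$. The one step requiring any care is the Fubini translation of the boundary classification; every remaining piece is a direct citation of the preceding results, so I do not anticipate any serious obstacle.
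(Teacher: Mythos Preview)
Your proposal is correct and matches the paper's own approach exactly: the paper states that the corollary ``is now immediate'' from the Comparison Lemma, Theorem \ref{t:existenceNoNat}, and Theorem \ref{t:uniqueIE}, and you have simply spelled out how those three ingredients combine, including the Fubini translation of Definition \ref{d:CAFbc} into the integrability hypotheses of Theorem \ref{t:existenceNoNat}. No new ideas are needed beyond that bookkeeping.
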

\begin{remark}
	Note that in each case considered in Corollary \ref{c:existenceNoNat} the unique solution is given by $ g\equiv 0$ if $a=\kappa=0$. When one of the coefficients is strictly positive, the unique solution can be found by employing the iterative construction of Theorem \ref{t:existenceNoNat} combined with a diagonal argument in view of the Comparison Lemma.
\end{remark}
If $\ell$ is $A$-natural, it was observed in Proposition \ref{p:kappas} that $a$ and $\kappa$ must both vanish in (\ref{e:choquet1}), in which case the construction of a solution  via the algorithm employed in Theorem \ref{t:existenceNoNat} is no longer applicable. However, the following familiar probabilistic construction yields the desired solution.
\begin{theorem} \label{t:existenceNat}
	Suppose that  $A$ is a PCAF with Revuz measure $\mu_A$ such that $\mu_A(\bfE)>0$. Then, for any $\alpha \in (0,\infty)$ and $c \in \elr$ the following hold:
	\begin{enumerate}
		\item \label{i:semiinf:case1} Suppose that $\ell$ is $A$-natural. The increasing function
		\[
		g(x):=\left\{\ba{ll}
		\alpha E^x[\chf_{[T_c<T_{\ell}]}\exp(-A_{T_c})],& x\leq c,\\
		\frac{\alpha}{E^c[\chf_{[T_x<T_{\ell}]}\exp(-A_{T_x})]}, & x>c,
		\ea\right.
		\]
		is the unique solution of (\ref{e:choquet1}) with $a=\kappa=0$ such that $g(c)=\alpha$.
		\item  Suppose that $r$ is $A$-natural. The decreasing function
		\[
		g(x):=\left\{\ba{ll}
		\alpha E^x[\chf_{[T_c<T_{r}]}\exp(-A_{T_c})],& c< x,\\
		\frac{\alpha}{E^c[\chf_{[T_x<T_{r}]}\exp(-A_{T_x})]}, & x\leq c,
		\ea
		\right.
		\]
		is the unique solution of  (\ref{e:choquet2}) with $a=\kappa=0$ such that $g(c)=\alpha$.
	\end{enumerate}
\end{theorem}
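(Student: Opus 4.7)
The plan is to define $g$ by the given probabilistic formulas, verify via the strong Markov property that $(g,A)$ is an \Ito-Watanabe pair with $g$ non-decreasing and uniformly integrable near $\ell$, and then invoke Corollary \ref{c:main1} together with Proposition \ref{p:kappas} to identify the integral equation solved by $g$. Uniqueness is then Corollary \ref{c:unique}. Only the first statement is addressed, as the second follows by the symmetric argument.

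First I would check that $g$ is well-defined, strictly positive, continuous and nondecreasing. Strict positivity of the denominator $E^c[\chf_{[T_x<T_\ell]}\exp(-A_{T_x})]$ is immediate from the regularity of $X$. The strong Markov property, applied at $T_{x_2}$ when $x_1<x_2\le c$, at $T_{x_1}$ when $c\le x_1<x_2$, and at $T_c$ in the mixed case $x_1<c<x_2$ (where continuity of sample paths forces any path reaching $x_2$ before $\ell$ to first cross $c$), yields the unified multiplicative identity
\[
g(x_1)=g(x_2)E^{x_1}[\chf_{[T_{x_2}<T_\ell]}\exp(-A_{T_{x_2}})],\qquad \ell<x_1<x_2<r.
\]
Since the multiplicative factor lies in $[0,1]$, $g$ is nondecreasing; continuity follows by dominated convergence applied to the defining expressions, and $g(c)=\alpha$ by inspection of both formulas.

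Next I would establish the optional stopping identity $g(x)=E^x[g(X_{T_{ab}})\exp(-A_{T_{ab}})]$ for every $\ell<a<x<b<r$. Starting from the instance $g(x)=g(b)E^x[\chf_{[T_b<T_\ell]}\exp(-A_{T_b})]$ of the multiplicative identity, strong Markov at $T_{ab}$ splits the right-hand side according to $X_{T_{ab}}\in\{a,b\}$; the relations $E^a[\chf_{[T_b<T_\ell]}\exp(-A_{T_b})]=g(a)/g(b)$ and $E^b[\chf_{[T_b<T_\ell]}\exp(-A_{T_b})]=1$ then produce the claim. Combined with the Markov property, this shows that the process $g(X)\exp(-A)$ stopped at $T_{ab}$ is a bounded martingale for every compact subinterval $[a,b]\subset\elr$, hence $g(X)\exp(-A)$ is a local martingale and $g(X^{T_{ab}})$ is a uniformly bounded submartingale. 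Consequently $g\in\cS^+$ and $(g,A)$ is an \Ito-Watanabe pair.

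Finally, the bound $g\le\alpha$ on $(\ell,c]$ ensures $g$ is uniformly integrable near $\ell$, so $g\in G_\ell$ once $g(\ell+)=0$ is known (which gives $g(r-)>g(\ell+)$ by monotonicity and the fact that $g\not\equiv 0$). Corollary \ref{c:main1} then produces constants $a'\ge 0$ and $\kappa_1\ge 0$ such that $g$ solves (\ref{e:choquet1}) with those coefficients, and Proposition \ref{p:kappas} forces $a'=\kappa_1=0$ because $\ell$ is $A$-natural. Uniqueness of the normalised solution is Corollary \ref{c:unique}. The main obstacle I anticipate is the multiplicative identity in the mixed case $x_1<c<x_2$: it requires a careful concatenation at $T_c$ on the event $\{T_{x_2}<T_\ell\}$, together with the auxiliary relation $h(x_2)=h(x_1)E^{x_1}[\chf_{[T_{x_2}<T_\ell]}\exp(-A_{T_{x_2}})]$ for $c<x_1<x_2$, where $h(x):=E^c[\chf_{[T_x<T_\ell]}\exp(-A_{T_x})]$, to bridge the two-piece definition of $g$.
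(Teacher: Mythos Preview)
Your proof is correct and follows essentially the same route as the paper's: use the strong Markov property to show $(g,A)$ is an \Ito-Watanabe pair with $g$ bounded near $\ell$, then invoke the earlier characterisation results and Corollary~\ref{c:unique} for uniqueness.

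There is one worthwhile simplification in the paper's execution that sidesteps the case analysis you flag as ``the main obstacle''. Rather than bridging the two-piece definition across $c$, the paper fixes an arbitrary $y>x\vee c$ and uses strong Markov once to obtain the unified formula
\[
g(x)=\frac{E^x[\chf_{[T_y<T_\ell]}\exp(-A_{T_y})]}{E^c[\chf_{[T_y<T_\ell]}\exp(-A_{T_y})]},
\]
valid for all $x<y$. This immediately shows $g(X)\exp(-A)$ stopped at $T_y$ is a bounded $P^x$-martingale (the numerator is the martingale), and letting $y\uparrow r$ gives the local martingale property directly. Your multiplicative identity and the subsequent optional stopping at $T_{ab}$ achieve the same end, but the ratio formula avoids the mixed-case concatenation entirely.

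A minor point of presentation: you phrase membership in $G_\ell$ as contingent on knowing $g(\ell+)=0$, but in fact the condition $g(r-)>\inf g$ follows already from monotonicity and non-constancy (if $g$ were constant, $\exp(-A)$ would be a local martingale), and the derivative bound holds because $g$ is nondecreasing and $s$-convex. The vanishing $g(\ell+)=0$ is then a consequence rather than a prerequisite; the paper obtains it from Proposition~\ref{p:bcg1}, which is equivalent to your use of Proposition~\ref{p:kappas}.
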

\begin{proof} Without loss of generality assume $\alpha=1$.
	\begin{enumerate}[leftmargin=*]
		\item  Let $y >x \vee c$. Suppose $c <x$. Then, 
		\[
		E^c[\chf_{[T_y<T_{\ell}]}\exp(-A_{T_y})]=E^c[\chf_{[T_x<T_{\ell}]}\exp(-A_{T_x})]E^x[\chf_{[T_y<T_{\ell}]}\exp(-A_{T_y})]
		\]
		by the strong Markov property. Via similar considerations when $x \leq c$, one thus arrives at
		\[
		g(x)=\frac{E^x[\chf_{[T_y<T_{\ell}]}\exp(-A_{T_y})]}{E^c[\chf_{[T_y<T_{\ell}]}\exp(-A_{T_y})]}.
		\]
		In particular, $g_r(X)\exp(-A)$ is a bounded martingale when stopped at $T_y$. Since $T_y$ increases to $\zeta$ as $y \rar r$, this shows $(g_r,A)$ is an \Ito-Watanabe pair bounded at $\ell$. Thus, it follows from Proposition \ref{p:bcg1}, Theorem \ref{t:main1}, and Corollary \ref{c:unique} that $g$ is the unique solution of the stated equation.
		
		\item Repeat the above  starting with  $y<x\wedge c$ and observing
		\[
		g_{\ell}(x)=\frac{E^x[\chf_{[T_y<T_{r}]}\exp(-A_{T_y})]}{E^c[\chf_{[T_y<T_{r}]}\exp(-A_{T_y})]}.
		\]
	\end{enumerate}
\end{proof}
\begin{remark}
	Probabilistic representations as in the previous theorem exist for solutions when the corresponding boundaries are not $A$-natural (see Sections 4.6 and 5.1 in \cite{IM} ). However, the integral operator introduced in Theorem \ref{t:existenceNoNat} and used again in Corollary \ref{c:existenceNoNat} allows for an easy numerical method for the computation of solutions.
\end{remark}
\begin{remark}
	The similarities in the restrictions on the boundary conditions appearing in Table \ref{table} and those provided in McKean \cite{MKpar} are not surprising. Indeed, if $Y$ is the regular diffusion defined by the time change $Y_{A_t}=X_t$, it is easy to see that $g(X)\exp(-A)$ is a local martingale if and only if $g(Y)\exp(-I)$ is, where $I_t:=t$, provided that the Revuz measure $\mu_A$ of $A$  has a full support in $\elr$.  In this case, the monotone increasing and decreasing $g$ that make the former a local martingale coincide with the fundamental solutions of $\cL g =g$, where $\cL$  is the infinitesimal generator of $Y$.  If  $\mu_A$ does not have full support, the above equivalence will still hold if $X$ is killed at its first exit from the support of $\mu_A$. 
	
	Inspired by the above observation one may hope to  find the strictly positive subharmonic functions by solving the ordinary differential equation (ODE) associated with the infinitesimal generator of the time changed process $Y$, whose speed measure is given by $\mu_A$. However, by the same reasoning as above, this will work only if $\mu_A$ has full support. Otherwise, the fundamental solutions of the ODE will only define those subharmonic functions on  the support of $\mu_A$, { which will not be of much value, for instance, if $\mu_A$ is the Dirac measure associated with local time at a point.}   On the other hand, the methodology developed in this paper constructs these function on the whole $\elr$.
	
\end{remark}
Theorems \ref{t:existenceNoNat} and \ref{t:existenceNat} establish the existence of non-constant and monotone solutions of (\ref{e:choquet1}) and (\ref{e:choquet2}) under appropriate boundary conditions depending on the nature of the boundary. Denoting the nondecreasing (resp. non-increasing) and nonconstant solution of (\ref{e:choquet1}) (resp. (\ref{e:choquet2})) by $\psi_A$ (resp. $\phi_A$), Table \ref{table} summarises the boundary behaviour of these  functions - {\em fundamental solutions} using an ODE terminology -  in view of Propositions \ref{p:bcg1} and  \ref{p:kappas}, and the following Remark \ref{r:fundamental}.

\begin{table}[tb]
	\caption{Fundamental solutions}
	\label{table}\vspace{1mm}
	\par
	\begin{center}
		
		\bgroup
		\def\arraystretch{1.25}
		
		\begin{tabular}{lcccc}
			& $A$-regular & $A$-entrance & $A$-exit & $A$-natural \\
			\hline
			$\psi_A(\ell)$ & $\geq 0$ & $>0$ & $=0$&$=0$ \\
			\hline
			$\phi_A(\ell)$ & $<\infty$  & $=\infty$& $<\infty$ & $=\infty$ \\
			\hline
			$\frac{d^+\psi_A}{ds}(\ell)$ &$\geq0$ & $=0$  &  $>0$& $=0$ \\
			\hline
			$-\frac{d^+\phi_A}{ds}(\ell)$ & $<\infty$ & $<\infty$ & $=\infty$ & $=\infty$ \\
			\hline
		\end{tabular}
		
		\egroup
		
	\end{center}
	\par
	\begin{spacing}{1.0}
		\footnotesize  Table describes the boundary behaviour of $\psi$ and $\phi$ near $\ell$ depending on the boundary classification for $\ell$. In order to make sure that the fundamental solutions do not vanish completely the condition $\psi_A(\ell)+\frac{d^+\psi_A}{ds}(\ell)>0$ must be imposed when $\ell$ is $A$-regular. The behaviour of the fundamental solutions near the boundary $r$ can be found by exchanging $\ell$ with $r$ and $\phi$ with $\psi$ in the above table. 
	\end{spacing}\vspace{2mm}
\end{table}
\begin{remark} \label{r:fundamental} The behaviour of the nonincreasing solution $\phi_A$ near $\ell$ follows from (\ref{e:choquet2}) and the conditions defining the $A$-entrance, $A$-exit, etc.. Indeed, when $\ell$ is $A$-natural or $A$-exit, $\mu_A((\ell,b))=\infty$ for any $b \in \elr$. This implies in conjunction with (\ref{e:gsder}) that the derivative of $\phi_A$ must be infinite at $\ell$ since $\phi_A(\ell)>0$. The finiteness of $\mu_A((\ell,b))$ when $\ell$ is $A$-entrance, on the other hand, yields a finite derivative. 
	
	Similarly, when $\ell$ is $A$-entrance, $s(\ell)=-\infty$ that in turn implies $\phi_A(\ell)=\infty$ by looking at (\ref{e:choquet2}). If $\ell$ is $A$-exit, $s(\ell)>-\infty$ and $\mu_A(\ell,b)=\infty$, which yield $\phi_A(\ell)=\infty$ in view of (\ref{e:choquet2}). In the remaining case of $\ell$ being $A$-natural and $s(\ell)>-\infty$, that $\int_{\ell}^b (s(y)-s(\ell))\mu_A(dy)=\infty$ yields similarly $\phi_A(\ell)=\infty$.
\end{remark}

So far in this paper the focus has been on semi-uniformly integrable subharmonic functions. The next result -- akin to the representation of solutions of ODEs in terms of linearly independent solutions -- shows that this is enough to characterise all.
\begin{theorem} \label{t:representation}
	For any $g\in \cS^+$  there exists a PCAF $A$ with Revuz measure $\mu_A$ such that $g =\lambda_1 \psi_A+\lambda_2 \phi_A$, where $\psi_A$ and $\phi_A$ are nondecreasing and nonincreasing fundamental solutions from Table \ref{table}.
\end{theorem}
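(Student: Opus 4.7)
My plan is to show that the linear space
\[
V:=\{h\colon\elr\to\bbR\text{ continuous}: h(X)\exp(-A)\text{ is a }P^x\text{-local martingale for every }x\in\elr\}
\]
is two-dimensional, spanned by $\psi_A$ and $\phi_A$, for the PCAF $A$ attached to $g$.  The candidate $A$ comes from Theorem \ref{t:submvanish}: since $g\in\cS^+$ there is a PCAF $A$ with $(g,A)$ an \Ito-Watanabe pair, and in particular $g\in V$.  The degenerate case $\mu_A(\elr)=0$ renders $g$ harmonic and therefore affine in $s$, so the representation is immediate using the two linearly independent harmonic functions as the stand-ins for $\psi_A,\phi_A$; henceforth I would assume $\mu_A(\elr)>0$, so that $\psi_A$ and $\phi_A$ are available via Corollary \ref{c:existenceNoNat} and Theorem \ref{t:existenceNat} and are strictly positive on $\elr$.

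The engine of the argument is an interpolation identity.  For $h\in V$ and $\ell<a<x<b<r$, continuity makes $h$ bounded on $[a,b]$, so $h(X^{T_{ab}})\exp(-A^{T_{ab}})$ is a bounded local martingale, and optional stopping gives
\[
h(x)=h(a)\,p(x;a,b)+h(b)\,q(x;a,b),
\]
with $p(x;a,b):=E^x[\chf_{[T_a<T_b]}\exp(-A_{T_a})]$ and $q(x;a,b):=E^x[\chf_{[T_b<T_a]}\exp(-A_{T_b})]$, both strictly positive by regularity of $X$ and the finiteness of $A$ on $[0,\zeta)$.  Fix $a_0<b_0$ in $\elr$ and consider the linear map $\Phi\colon V\to\bbR^2$ given by $\Phi(h)=(h(a_0),h(b_0))$.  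If $\Phi(h)=0$, the identity at $(a,b)=(a_0,b_0)$ forces $h\equiv 0$ on $[a_0,b_0]$; applying the identity on $(x,b_0)$ for $x<a_0$ and reading it off at the interior point $a_0$ yields $0=h(a_0)=h(x)\,p(a_0;x,b_0)$, whence $h(x)=0$, and the case $x>b_0$ is symmetric.  Thus $\Phi$ is injective and $\dim V\le 2$.

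To conclude I would verify that $\psi_A$ and $\phi_A$ are linearly independent in $V$.  If $\lambda_1\psi_A+\lambda_2\phi_A\equiv 0$ with $\lambda_2\ne 0$, positivity of both functions forces a positive ratio, making the common function simultaneously non-decreasing and non-increasing, hence constant; but by (\ref{e:gsder}) a constant $\psi_A$ entails $\kappa_{\psi_A}+\int_\ell^{\cdot}\psi_A(y)\mu_A(dy)\equiv 0$, contradicting $\mu_A(\elr)>0$ since $\psi_A>0$.  Therefore $V=\mathrm{span}(\psi_A,\phi_A)$, and $g=\lambda_1\psi_A+\lambda_2\phi_A$ for suitable $\lambda_1,\lambda_2\in\bbR$.

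The delicate step is the propagation of vanishing in the injectivity argument for $\Phi$, which hinges on $p(a_0;x,b_0)>0$ for every $x<a_0$.  This requires both $P^{a_0}(T_x<T_{b_0})>0$ (regularity of $X$ on $\elr$) and $A_{T_x}<\infty$ a.s.\ on $\{T_x<T_{b_0}\}$; the latter is automatic because $T_x<\zeta$ on that event and a PCAF is finite on $[0,\zeta)$ by Definition \ref{d:caf}.
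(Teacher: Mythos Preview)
Your proof is correct and follows essentially the same route as the paper's own argument. Both hinge on the interpolation identity obtained from optional stopping of $h(X)\exp(-A)$ at $T_{ab}$, and both propagate vanishing from a fixed interval $[a_0,b_0]$ to the whole of $\elr$; the only cosmetic difference is that you frame the argument as showing $\dim V\le 2$ via injectivity of the evaluation map $\Phi$, whereas the paper directly solves for $\lambda_1,\lambda_2$ at two reference points and verifies equality by the same optional-stopping computation.
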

\begin{proof}
	Since $g$ is subharmonic, it is a convex function of $s$ and there exists a PCAF $B$ such that $g(X)-B$ is a $P^x$-local martingale for every $x\in \elr$. If $B\equiv 0$, $g$ must be an affine transformation of $s$, in which case $g$ is excessive and the claim holds with $\mu_A\equiv 0$. Note that if both $s(r)$ and $s(\ell)$ are infinite, that is $X$ is recurrent, only excessive functions are constants (see,.e.g., Exercise 10.39 in \cite{GTMP}). 
	
	Thus, suppose $B$ is not identically $0$. Since $g\in \cS^+$, $A_t =\int_0^t \frac{1}{g(X_s)}dB_s$ is well-defined as a PCAF. As observed before, $g(X)\exp(-A)$ can be easily checked to be a $P^x$-local martingale. 
	
	Let $\psi_A$ and $\phi_A$ are nondecreasing and nonincreasing fundamental solutions from Table \ref{table}.   Next consider an interval $(a^0,b^0)$ with $\ell <a^0<c<b^0<r$ and let $\lambda_1$ and $\lambda_2$ be such that
	\[
	\lambda_1 \psi_A{r}(a^0)+ \lambda_2 \phi_A(a^0)= g(a^0) \mbox{ and } \lambda_1 \psi_A(b^0) + \lambda_2 \phi_A(b^0)=g(b^0)
	\]
	noting that the above has a unique solution since $\phi_A$ and $\psi_A$ are linearly independent. Since $\exp(-A) \left\{g(X)- \lambda_1 \psi_A(X)-\lambda_2 \phi_A(X)\right\}$ is a $P^x$-local martingale and $g$ as well as $\phi_A$ and $\psi_A$  are continuous, one obtains
	\[
	g(x)-\lambda_1 \psi_A(x)-\lambda_2 \phi_A(x)=E^x\left[\exp(-A_{T_{ab}})\left\{g(X_{T_{ab}})- \lambda_1 \psi_A(X_{T_{ab}})-\lambda_2 \phi_A(X_{T_{ab}})\right\}\right]=0
	\]
	for any $x \in (a^0,b^0)$. Using the optional stopping theorem at $T_{zb^0}$ for $\ell<z <a^0$ and $x\in (a^0,b^0)$ shows    $g(z)=\lambda_1 \psi_A(z)+\lambda_2 \phi_A(z)$. Repeating the same argument at $T_{a^0z}$ for $z>b^0$ establishes $g(z)=\lambda_1 \psi_A(z)+\lambda_2 \phi_A(z)$ on $(b^0,r)$, hence the claim.
\end{proof}

\section{Path transformations via \Ito-Watanabe pairs} \label{s:transform}
This section is devoted to measure changes via \Ito-Watanabe pairs. Note that  the local martingale associated to a given \Ito-Watanabe pair  is not in general a uniformly integrable martingale, as can be seen from the following result in a special case.
\begin{proposition} \label{p:conv20}
	Suppose that $X$ is recurrent,  $f\geq 0$ and $A$ is a PCAF such that $f(X)\exp(-A)$ is a supermartingale.  Assume further that $f$ is continuous on $\elr$ and either $f(\ell+)$ or $f(r-)$ exist (with the possibility of being infinite). Then, $f(X_t)\exp(-A_t)\rar 0$, $P^x$-a.s. for all $x \in \elr$.
\end{proposition}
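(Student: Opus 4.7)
The plan combines Doob's supermartingale convergence with the recurrence of $X$. Write $M_t := f(X_t)\exp(-A_t)$; as a non-negative supermartingale, $M_t \to M_\infty$ $P^x$-a.s.\ with $M_\infty \in [0,\infty)$, and monotonicity of $A$ forces $\exp(-A_t) \to L := \exp(-A_\infty) \in [0,1]$. The goal is to show $M_\infty = 0$ a.s.

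The core step is to test $M$ along subsequences on which $X$ visits a prescribed value. Recurrence together with $\zeta=\infty$ gives, by the strong Markov property, that for each $y\in\elr$ and $n\in\bbN$ the iterated hitting time $T^{(n)}_y := n + T_y\circ\theta_n$ is a.s.\ finite and $T^{(n)}_y \uparrow \infty$. Along $t_n:=T^{(n)}_y$ one has $X_{t_n}=y$, so $M_{t_n}=f(y)\exp(-A_{t_n}) \to f(y)L$, and also $M_{t_n}\to M_\infty$; hence $M_\infty=f(y)L$ $P^x$-a.s.\ for every fixed $y\in\elr$. Restricting to a countable dense $D\subset\elr$ yields this identity simultaneously for all $y\in D$ on a common full-measure event, and continuity of $f$ extends it to all $y\in \elr$. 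Choosing $y_1,y_2$ with $f(y_1)\neq f(y_2)$ then forces $L(f(y_1)-f(y_2))=0$, hence $L=0$ a.s., whence $M_\infty=0$. When the boundary hypothesis provides $f(r-)=+\infty$ (respectively $f(\ell+)=+\infty$), a quicker route is available: recurrence yields $s_n\uparrow\infty$ with $X_{s_n}$ tending to the relevant boundary, so $f(X_{s_n})\to \infty$ by continuity together with the boundary limit, and finiteness of $M_{s_n}\to M_\infty$ immediately forces $\exp(-A_{s_n})\to 0$, i.e.\ $L=0$.

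The residual case where $f$ is identically a positive constant is not handled by the oscillation argument; here one invokes the implicit non-triviality of $A$ (that is, $\mu_A(\bfE)>0$, without which the statement is vacuous) together with recurrence of $X$ to deduce via Theorem~\ref{t:Afinite}(1) that $A_\infty=\infty$ $P^x$-a.s., so $L=0$ and $M_t=c\exp(-A_t)\to 0$. The main obstacles in the plan are (i) the careful management of exceptional null sets so that $M_\infty=f(y)L$ holds simultaneously in $(y,\omega)$, which is why one passes through a countable dense set and uses continuity of $f$; and (ii) the constant-$f$ branch, which genuinely requires Theorem~\ref{t:Afinite} rather than the sequence-based oscillation argument, since a recurrent $X$ gives no variation in the values of $f$ along its trajectory.
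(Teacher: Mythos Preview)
Your argument is correct, and it takes a genuinely different route from the paper's. The paper's proof is a two-line contrapositive: it invokes $A_\infty=\infty$ a.s.\ (implicitly via Theorem~\ref{t:Afinite}(1) and recurrence) right away, so that on $[M_\infty>0]$ one must have $f(X_t)\to\infty$; continuity of $f$ then forces $X_t$ to converge to a boundary point, contradicting recurrence. Your approach instead samples $M$ along hitting times to obtain the pointwise identity $M_\infty=f(y)L$ for all $y$, which forces $L=0$ whenever $f$ is non-constant, and only falls back on $A_\infty=\infty$ in the residual constant-$f$ branch. The trade-off: the paper's argument is shorter but leans on $A_\infty=\infty$ globally; yours is longer but isolates precisely where the non-triviality of $A$ is needed and makes clear that the boundary-limit hypothesis on $f$ plays essentially no role in the non-constant case.

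Two small remarks. First, your iterated hitting times $T^{(n)}_y=n+T_y\circ\theta_n$ need not be monotone in $n$, so ``$\uparrow\infty$'' should just be ``$\to\infty$'' (which is all you use, since $T^{(n)}_y\ge n$). Second, both your proof and the paper's require $\mu_A(\bfE)>0$ to handle the constant-$f$ case; you flag this explicitly, whereas the paper simply asserts $A_\infty=\infty$ a.s.\ without comment.
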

\begin{proof} Since $f(X)\exp(-A)$ is a non-negative supermartingale, it converges a.s.. If this limit is non-zero with non-zero $P^x$-probability, then $P^x(\lim_{t \rar \infty}f(X_t)=\infty)>0$ since $A_{\infty}=\infty$, a.s.. However, this implies $\lim_{t\rar \infty}X_t$ exists and equals $\ell$ or $r$ with positive probability, which contradicts recurrence.
\end{proof} 
Nevertheless,  one can still construct a Markov process, whose law is locally absolutely continuous with respect to that of the original process since $g(X)\exp(-A)$ is a {\em supermartingale multiplicative functional} (see Section 62 of \cite{GTMP}). 
\begin{theorem} \label{t:cofm}
	Consider an \Ito-Watanabe pair $(g,A)$, where $g$ is semi-uniformly integrable.   Then there exists a unique family of measures  $(Q^x)_{x \in \elr}$ on $(\Omega, \cF^u)$ rendering $X$ Markov with semigroup $(Q_t)_{t \geq 0}$ and $Q^x(X_0=x)=1$. Moreover, the following hold:
	\begin{enumerate}
		\item For every stopping time $T$ and $F\in \cF_T$
		\be \label{e:RN}
		Q^x(F,T<\zeta) =\frac{E^x[\chf_F \chf_{[T<\zeta]}g(X_T)\exp(-A_T)]}{g(x)} .
		\ee
		\item The semigroup $(Q_t)_{t \geq 0}$ coincides with that of a one-dimensional regular diffusion with no killing on $\elr$, scale function $s_g$ and speed measure $m_g$,
		where
		\[
		s_g(dx)=\frac{1}{g^2(x)}ds(x), \qquad m_g(dx)=g^2(x)m(dx).
		\]
		\item If $B$ is a PCAF of $X$ with Revuz measure $\mu$ under $(P^x)_{x\in \elr}$ and speed measure $m$, its Revuz measure under $(Q^x)_{x\in \elr}$ and speed measure $m_g$ is given by $\mu_g(dx) =g^2(x)\mu(dx)$. 
	\end{enumerate} 
	
\end{theorem}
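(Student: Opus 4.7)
The plan is to construct the family $(Q^x)$ via the supermartingale multiplicative functional $M_t := g(X_t)\exp(-A_t)/g(X_0)$, to identify the scale function $s_g$ by showing $s_g(X)$ is a $Q$-local martingale, and finally to verify the Revuz measure transformation (which contains the speed measure as a special case) by a direct limit calculation from the Radon-Nikodym formula.

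First I would verify that $M$ is a supermartingale multiplicative functional under $(P^x)_{x\in\elr}$: non-negativity and the $P^x$-supermartingale property follow from $(g,A)$ being an \Ito-Watanabe pair together with $g>0$ on $\elr$, and multiplicativity $M_{t+s} = M_t\cdot (M_s\circ\theta_t)$ is immediate from the additivity of $A$. The general theory of transformations by supermartingale multiplicative functionals (Section 62 of \cite{GTMP}) then yields a unique family $(Q^x)$ under which $X$ is Markov with semigroup $Q_t f(x) = g(x)^{-1}E^x[g(X_t)\exp(-A_t)f(X_t)]$ and with $dQ^x|_{\cF_t\cap\{t<\zeta\}} = M_t\, dP^x|_{\cF_t\cap\{t<\zeta\}}$; uniqueness follows from the explicit specification of $(Q_t)$. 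That $M$ is a bona fide local martingale (not merely a supermartingale) is what ultimately produces a $Q$-diffusion with no killing on $\elr$. For (\ref{e:RN}) at a general stopping time $T$ I would apply optional stopping to the bounded $P^x$-martingale $M^{T\wedge T_{ab}\wedge n}$ (bounded since $g$ is continuous on $[a,b]\Subset\elr$ and $\exp(-A)\leq 1$), and then pass to the limit along a compact exhaustion of $\elr$; semi-uniform integrability of $g$ controls the boundary contribution.

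Continuous paths, the strong Markov property, and regularity transfer from $P$ to $Q$ via the multiplicative functional construction (using strict positivity of $M$ on $\{t<\zeta\}$), so the $Q$-process is a regular one-dimensional diffusion on $\elr$ and is therefore characterised by its scale and speed. To identify the scale as $s_g$, I would show that $s_g(X^{T_{ab}})$ is a $Q^x$-martingale for every $[a,b]\Subset\elr$; by (\ref{e:RN}) this is equivalent to $\phi(X)\exp(-A)$ being a $P^x$-local martingale with $\phi := s_g\cdot g$. The function $\phi$ is a difference of $s$-convex functions with $s$-derivative $d\phi/ds = 1/g + s_g\cdot dg/ds$, and applying the Stieltjes product rule (for the continuous $s$-absolutely continuous $s_g$ against the right-continuous BV function $d^+g/ds$, whose distributional $s$-derivative is $g\, d\mu_A$ by Lemma \ref{l:mainR}) together with $d(1/g) = -g^{-2}(dg/ds)\, ds$ yields
\[
d\bigl(d\phi/ds\bigr) = -\frac{1}{g^2}\frac{dg}{ds}\, ds + \frac{1}{g^2}\frac{dg}{ds}\, ds + s_g\cdot g\, d\mu_A = \phi\, d\mu_A.
\]
This is exactly the measure-theoretic identity characterising \Ito-Watanabe pairs via Lemma \ref{l:mainR}, so the continuous CAF part of the Doob-Meyer decomposition of $\phi(X)$ is $\int_0^\cdot \phi(X_s)\, dA_s$, and an integration by parts then produces $\phi(X)\exp(-A)$ as a $P^x$-local martingale. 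Stopped at $T_{ab}$, $s_g(X)$ is bounded, hence a true $Q^x$-martingale, giving $Q^x(T_b<T_a) = (s_g(x)-s_g(a))/(s_g(b)-s_g(a))$ and identifying $s_g$ as the $Q$-scale.

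For the Revuz transformation, and hence the speed measure via the specialisation $B_t=t$, I would use the characterisation
\[
\int f(y)\, \mu_g(dy) = \lim_{t\downarrow 0}\frac{1}{t}\int_\elr Q^x\!\left[\int_0^t f(X_s)\,dB_s\right] m_g(dx),
\]
which by (\ref{e:RN}) at $T=t$ and the identity $m_g=g^2 m$ becomes $\lim_{t\downarrow 0} t^{-1}\int_\elr g(x)\, E^x[g(X_t)\exp(-A_t)\int_0^t f(X_s)\,dB_s]\, m(dx)$. Continuity of $g$ and of $A$ at $0$, combined with a dominated-convergence argument, allow replacing $g(X_t)\exp(-A_t)$ by $g(x)$ in the inner expectation as $t\downarrow 0$; the limit then equals $\int f(y)g^2(y)\,\mu(dy)$ by the Revuz characterisation of $\mu$ under $P$ with speed $m$, giving $\mu_g = g^2\mu$. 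Specialising to $B_t=t$ recovers $m_g=g^2 m$ consistently with the scale/speed step. The main analytical obstacle I expect is the second-order Stieltjes computation $d(d\phi/ds) = \phi\, d\mu_A$, which requires combining one-sided $s$-derivatives consistently and invoking the generalised \Ito formula for differences of $s$-convex functions in order to pass from the measure identity to the local martingale property of $\phi(X)\exp(-A)$; a secondary technical point is controlling, uniformly in $x$ on compacts, the rate at which $g(X_t)\exp(-A_t)$ approaches $g(x)$ in order to justify the limit interchange in the Revuz computation.
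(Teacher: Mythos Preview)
Your construction of $(Q^x)$ via the multiplicative functional and your identification of $s_g$ as a scale function through the Stieltjes identity $d(d\phi/ds) = \phi\, d\mu_A$ are correct and essentially coincide with the paper's argument (which phrases the same computation in integral-equation form via Corollary~\ref{c:main1}). The gap is in the speed measure. Your plan to recover $m_g$ by specialising the Revuz computation to $B_t = t$ is circular: the Revuz formula you write already takes $m_g$ as the reference measure, and the Revuz measure of $B_t=t$ with respect to \emph{any} reference $\nu$ is $\nu$ itself, so the specialisation is a tautology that cannot identify $m_g$ as the $Q$-speed. You have shown that $s_g$ is \emph{a} scale function, but not that $(s_g, m_g)$ is a correctly normalised scale/speed pair---a priori the scale compatible with $m_g$ could be $k\, s_g$ for some $k\neq 1$. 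The paper supplies the two missing ingredients: it first shows directly that $m_g$ is a symmetry measure for $(Q_t)$, using the $P$-symmetry of $m$ and the multiplicativity of $\exp(-A)$ (Theorem~13.25 in \cite{ChungWalsh}); it then pins down $k=1$ by computing $Q^x[s(X_{T_{ab}})]-s(x)$ in two different ways, invoking the integration-by-parts result of Theorem~\ref{t:ibp}.

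Your limit argument for part~(3) is a genuinely different route from the paper's, but the domination you flag as a ``secondary technical point'' is in fact the crux: the outer integral runs over all of $\elr$ against $m(dx)$ with $g$ possibly unbounded, and replacing $g(X_t)\exp(-A_t)$ by $g(x)$ uniformly in $x$ inside a $t^{-1}$-scaled limit is not justified by path continuity alone. The paper avoids this entirely by working with potentials on compact subintervals, computing $Q^x[B_{T_{ab}}] = \int_a^b u^*_{ab}(x,y)\, \mu_g(dy)$ first for $B_t = \int_0^t f(X_s)\, ds$ and then for local times $L^y$ (whose $Q$-Revuz measure is identified via a support argument), so that every quantity involved stays bounded and no limit interchange is required.
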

\begin{proof}
	See the Appendix.
\end{proof}

\begin{remark} \label{r:cofm}
	A quick inspection of the proof reveals that Theorem \ref{t:cofm} remain valid if $g =c_1 g_1 + c_2 g_2$, where $c_i\geq 0$ and $(g_i,A)$ are \Ito-Watanabe pairs with semi-uniformly integrable $g_i$s. Thus, it is valid for all \Ito-Watanabe pairs in view of Theorem \ref{t:representation}.
\end{remark}
Remarkably  \Ito-Watanabe pairs transform recurrent diffusions to transient ones. 
\begin{corollary} \label{c:trtransform1}
	Suppose $X$ is recurrent and $A$ is a PCAF with $\mu_A(\bfE)>0$. Then $P^x(A_{\infty})=\infty)=1$. Consider $g=c_1 \psi_A + c_2 \phi_A$, where $\phi_A$ and $\psi_A$ are respectively the nonincreasing and nondecreasing functions defined in Table \ref{table}, $c_i\geq 0$ and $c_1+c_2>0$. Let $(Q^x)_{x\in \elr}$ denote the family of measures defined in Theorem \ref{t:cofm}. Then $X$ is transient under $(Q^x)_{x\in \elr}$. Moreover, 
	\begin{enumerate}
		\item If $c_1=0$, $Q^x(X_{\zeta -}=\ell)=1$.
		\item If $c_2=0$, $Q^x(X_{\zeta -}=r)=1$.
		\item If $c_1$ and $c_2$ are non-zero, $Q^x(X_{\zeta -}=r)>0$ and $Q^x(X_{\zeta -}=\ell)>0$.
	\end{enumerate}
\end{corollary}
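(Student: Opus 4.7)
The statement $P^x(A_{\infty}=\infty)=1$ is immediate from Theorem~\ref{t:Afinite}(1), since $X$ is recurrent and $\mu_A(\bfE)>0$. Recurrence also forces $s(\ell)=-\infty$ and $s(r)=\infty$, so by Remark~\ref{r:Aboundary} each of $\ell,r$ is either $A$-entrance or $A$-natural. Reading off Table~\ref{table} accordingly we record $\psi_A(\ell)<\infty$, $\phi_A(\ell)=\infty$, $\psi_A(r)=\infty$, $\phi_A(r)<\infty$, together with strict positivity and the stated monotonicities of $\psi_A$ and $\phi_A$ on $\elr$.

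The heart of the proof will be four integrability statements: for every $c\in\elr$,
\[
\int_\ell^c \psi_A^{-2}(y)\,s(dy)=\int_c^r \phi_A^{-2}(y)\,s(dy)=\infty, \qquad \int_c^r \psi_A^{-2}(y)\,s(dy)+\int_\ell^c \phi_A^{-2}(y)\,s(dy)<\infty.
\]
The divergences come for free from monotonicity: for example $\psi_A(y)\leq \psi_A(c)$ on $(\ell,c)$ combined with $s(\ell)=-\infty$ forces $\int_\ell^c \psi_A^{-2}\,ds\geq \psi_A^{-2}(c)(s(c)-s(\ell))=\infty$. The convergent half is the one genuine analytic step. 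Choosing $c$ so that $K:=\int_\ell^c \psi_A(y)\mu_A(dy)>0$ (which is possible because $\mu_A(\bfE)>0$ and $\psi_A$ is strictly positive), the representation in Corollary~\ref{c:main1} applied to $\psi_A\in G_\ell$ gives, after retaining only the $[\ell,c]$-portion of the integral, the linear lower bound $\psi_A(x)\geq K(s(x)-s(c))$ for $x>c$. A change of variable $u=s(x)-s(c)$ then bounds the tail $\int_{c'}^r \psi_A^{-2}\,ds$ (for any $c'>c$) by $K^{-2}\int_{s(c')-s(c)}^\infty u^{-2}\,du<\infty$, while monotonicity takes care of the contribution on $[c,c']$. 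The $\phi_A$ case is entirely symmetric.

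With these estimates in hand, the conclusion will follow from Theorem~\ref{t:cofm} together with Remark~\ref{r:cofm}: under $(Q^x)$ the process $X$ is a regular diffusion on $\elr$ without killing, with scale $s_g$ determined by $ds_g=g^{-2}\,ds$ where $g=c_1\psi_A+c_2\phi_A$. When $c_1=0$ the previous estimates give $s_g(\ell)>-\infty$ and $s_g(r)=\infty$, so the standard scale classification of one-dimensional diffusions delivers transience with $X_{\zeta-}=\ell$, $Q^x$-a.s.; the case $c_2=0$ is symmetric and yields $X_{\zeta-}=r$. For $c_1,c_2>0$ the pointwise bounds $g\geq c_1\psi_A$ and $g\geq c_2\phi_A$ combine with the estimates to give both $s_g(\ell)>-\infty$ and $s_g(r)<\infty$, and the classical exit-probability formula $Q^x(X_{\zeta-}=\ell)=(s_g(r)-s_g(x))/(s_g(r)-s_g(\ell))$ lies strictly in $(0,1)$ for any $x\in\elr$. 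The only non-routine point is the linear lower bound on $\psi_A$; once in hand, everything else reduces to comparison, monotonicity, and a textbook application of the scale classification.
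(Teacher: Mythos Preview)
Your proof is correct and follows the same overall strategy as the paper: establish the appropriate finiteness/infiniteness of $s_g(\ell)$ and $s_g(r)$ via a linear lower bound on $g$ (in the $s$-scale) near the boundary where $g\to\infty$, then read off the exit behaviour from the scale classification. The one place you diverge is in how you obtain that linear lower bound: the paper argues directly from $s$-convexity of $g$ together with $g(\ell+)=\infty$ (which forces a negative $s$-slope somewhere, hence a supporting line of the form $g(x)\geq -k\,s(x)+C$), whereas you extract the bound from the Choquet-type integral representation of $\psi_A$ (resp.\ $\phi_A$) and then dominate $g$ from below by $c_i\psi_A$ or $c_i\phi_A$. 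Both arguments are short; the paper's convexity route is marginally slicker for case~(3), while your framework has the virtue of handling the divergent and convergent integrals for $\psi_A^{-2}$ and $\phi_A^{-2}$ uniformly, so cases~(1) and~(2) fall out with no extra work rather than being left as ``similar''.
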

\begin{proof} That $P^x(A_{\infty})=\infty)=1$ follows from Theorem \ref{t:Afinite}.
	
	Proof of only  the third remaining statement will be given  as the other cases are treated similarly. First observe from Table \ref{table} that $\psi_A(r)=\infty$ since $r$-cannot be $A$-regular or $A$-exit.  Similarly, $\phi_A(\ell)=\infty$.
	
	Suppose $ds(x)=dx$, without loss of generality, and note that
	\[
	s_g(\ell+)=-\int_{\ell}^c \frac{1}{g^2(x)}dx.
	\]
	Since $g$ is convex and $g(\ell+)=+\infty$, there exists $x^*<0$ and $k>0$ such that  $g(x)> -kx$ for all $x <x^*$. Thus, $s_g(\ell+)>-\infty$. Similarly, $s_g(r-)<\infty$. This proves the claim.
\end{proof}
The result above is a general case of the transient transformation considered in Proposition 5.1 in \cite{rectr}. A version  exists for transient diffusions as well.
\begin{corollary} \label{c:trtransform2}
	Suppose that $X$ is transient and $A$ is a PCAF with $\mu_A(\bfE)>0$. Consider $g=c_1 \psi_A + c_2 \phi_A$, where $\phi_A$ and $\psi_A$ are respectively the nonincreasing and nondecreasing functions defined in Table \ref{table}, $c_i\geq 0$ and $c_1+c_2>0$. Let $(Q^x)_{x\in \elr}$ denote the family of measures defined in Theorem \ref{t:cofm}. Then $X$ is transient under $(Q^x)_{x\in \elr}$. Moreover, $Q^x(X_{\zeta-}=\ell)>0$ if $c_2>0$ and $Q^x(X_{\zeta-}=r)>0$ if $c_1>0$.
\end{corollary}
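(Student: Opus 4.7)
The plan is to mimic the pattern of Corollary \ref{c:trtransform1} but replace the convexity-at-infinity argument used there (which relied on recurrence forcing $g$ to blow up at both endpoints) with one that uses only the existence of a positive $s$-slope for $\psi_A$ and $\phi_A$ somewhere in the interior. First I would verify that $(g,A)$ is indeed an \Ito-Watanabe pair with $g>0$ on $\elr$: by Theorem \ref{t:main1} together with the existence results of Section \ref{s:existence}, both $(\psi_A,A)$ and $(\phi_A,A)$ are \Ito-Watanabe pairs, and an inspection of the iterative construction of Theorem \ref{t:existenceNoNat} (or the probabilistic representation of Theorem \ref{t:existenceNat}) shows $\psi_A,\phi_A>0$ on $\elr$. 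Linearity then gives that $g(X)\exp(-A)$ is a local $P^x$-martingale, and Remark \ref{r:cofm} allows the use of Theorem \ref{t:cofm} even though $g$ itself need not be semi-uniformly integrable; under the resulting laws $(Q^x)$ the process $X$ is a regular diffusion on $\elr$ with scale $s_g(dx)=ds(x)/g^2(x)$.

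The argument then reduces to establishing the two scale-function estimates
\[
c_1>0\;\Longrightarrow\; s_g(r-)<\infty\qquad\text{and}\qquad c_2>0\;\Longrightarrow\; s_g(\ell+)>-\infty,
\]
since the classical characterization for one-dimensional diffusions on $\elr$ with scale $s_g$ (cf.\ Chapter II of \cite{BorSal}) then yields transience of $X$ under $(Q^x)$ together with $Q^x(X_{\zeta-}=r)>0$ and $Q^x(X_{\zeta-}=\ell)>0$, respectively. I would only write out the first implication, the second being symmetric via $\psi_A\leftrightarrow\phi_A$ and $\ell\leftrightarrow r$. Since $\psi_A$ is nondecreasing, $s$-convex and non-constant, its right $s$-derivative is nondecreasing and not identically zero, so one can choose $x_0\in\elr$ with $\kappa_0:=d^+\psi_A(x_0)/ds>0$, yielding $\psi_A(x)\geq \psi_A(x_0)+\kappa_0(s(x)-s(x_0))$ on $[x_0,r)$. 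Using $g\geq c_1\psi_A$,
\[
\int_{x_0}^{r}\frac{ds(x)}{g^2(x)}\;\leq\;\frac{1}{c_1^2}\int_{x_0}^{r}\frac{ds(x)}{\bigl(\psi_A(x_0)+\kappa_0(s(x)-s(x_0))\bigr)^2}\;\leq\;\frac{1}{c_1^2\,\kappa_0\,\psi_A(x_0)}<\infty,
\]
and the trivial finiteness of $\int_c^{x_0}ds/g^2$ (since $g$ is continuous and strictly positive on $[c,x_0]$) delivers $s_g(r-)<\infty$.

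The main obstacle I anticipate is not the estimate itself but the bookkeeping required to apply Theorem \ref{t:cofm}: the combination $c_1\psi_A+c_2\phi_A$ need not be semi-uniformly integrable even though $\psi_A$ and $\phi_A$ are semi-u.i.\ near $\ell$ and $r$ respectively, and this is precisely why one must invoke the extension in Remark \ref{r:cofm} (which goes via Theorem \ref{t:representation}). Once this point is settled, the convexity-based lower bound on $\psi_A$ is elementary, and contrary to the recurrent case treated in Corollary \ref{c:trtransform1} one does not need $\psi_A(r)=\infty$ or $\phi_A(\ell)=\infty$; the non-constancy of $\psi_A$ alone supplies the required positive slope, and this is what makes the transient case go through uniformly across all four boundary classifications of $r$ in Table \ref{table}.
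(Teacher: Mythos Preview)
Your proof is correct and follows the same overall strategy as the paper: show that the new scale function $s_g$ is finite at the relevant endpoint by bounding $g$ from below by an affine function of $s$, which makes $1/g^2$ integrable against $ds$. The paper argues the $\ell$-side (with $\phi_A$) and splits into two cases---when $s(\ell)>-\infty$ it simply uses that $\phi_A$ is bounded away from zero near $\ell$, and when $s(\ell)=-\infty$ it invokes $\phi_A(\ell)=\infty$ together with convexity to obtain $\liminf_{x\to\ell}\phi_A(x)/(-s(x))>0$. Your version is slightly cleaner: by picking any interior point $x_0$ where the $s$-derivative of $\psi_A$ is strictly positive (which exists by non-constancy), the tangent line at $x_0$ gives the needed linear lower bound in one stroke, regardless of whether $s(r)$ is finite or not. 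This unification is a genuine, if minor, simplification over the paper's case analysis, and your explicit remark about needing Remark~\ref{r:cofm} to justify applying Theorem~\ref{t:cofm} to a general combination $c_1\psi_A+c_2\phi_A$ is a point the paper leaves implicit.
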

\begin{proof}
	Suppose $ds(x)=dx$ without loss of generality. If $\ell >-\infty$, it is clear that $s_g(\ell+)$ is finite when $c_2>0$ since $\phi_A(\ell)>0$. Thus, suppose $\ell=-\infty$. Consequently, $\ell$ is either $A$-entrance or $A$-natural and $\phi_A(-\infty)=\infty$.    Since $\phi_A$ is convex, one has $\lim_{x\rar -\infty}\frac{\phi_A(x)}{-x}>0$, which in turn yields that $\int_{-\infty}^b\frac{1}{\phi_A^2(x)}dx<\infty$ and implies  the finiteness of $s_g(\ell+)$ when $c_2>0$.
	
	The implication of $c_1>0$ is proved similarly.
\end{proof}
The following example illustrates the above transformations in a rather simple setting, yet  exhibiting an intriguing singularity in the limit.
\begin{example}
	Consider a one-dimensional diffusion on natural scale with the state space $\bbR$. Let $\delta >0$ and note that $\mu_A(dx)=\frac{\epsilon_1(dx)}{\delta}$ is the Revuz measure for the PCAF $(2\delta)^{-1}L^1$, where $L^1$ is the semimartingale local time for $X$ at $1$. One can solve (\ref{e:IEh}) explicitly in this case to find
	\[
	\psi_A(x)= \delta +(x-1)^+ \mbox{ and } \phi_A(x)=\delta +(1-x)^+
	\]
	satisfying $\psi_A(1)=\phi_A(1)=\delta$. Moreover, $(\psi_A,A)$ and $(\phi_A,A)$ are \Ito-Watanabe pairs due to Theorem \ref{t:main1}.
	
	If one uses  $(\psi_A,A)$ to apply a path transformation to $X$ via Theorem \ref{t:cofm}, one obtains a transient diffusion (see Corollary \ref{c:trtransform1}) with scale function
	\[
	s_{\delta}(x)=\left\{\ba{ll}
	\frac{1}{\delta}-\frac{1}{\delta+x-1}, & \mbox{if } x\geq 1;\\
	\frac{x-1}{\delta^2}, & \mbox{if } x<1.
	\ea	\right.
	\]
	Note that $s_{\delta}(\infty)<\infty$, implying that the diffusion drifts towards infinity in the long run. Moreover, the potential density $u_{\delta}$ is given by
	\[
	u_{\delta}(x,y)=\frac{1}{\delta}-s_{\delta}(x\vee y).
	\]
	In particular if the original $X$ is a Brownian motion, the dynamics of $X$ under $(Q^x)$, where $Q^x$ is as defined in Theorem \ref{t:cofm}, is given by
	\[
	dX_t=dW_t+ \chf_{[X_t >1]}\frac{1}{\delta +X_t -1}dt,
	\]
	where $W$ is a standard Brownian motion. $X$ following the above dynamics still has the whole $\bbR$ as it state space. However, { by comparing to a 3-dimensional Bessel process}, it can be guessed that for smaller values of $\delta$ it must be getting harder for $X$ to move from the half space $(1,\infty)$ to $(-\infty,1)$ { due to the extremely large positive drift in the right neighbourhood of $1$}.  By taking formal limits as $\delta \rar 0$ one can see that $X$ is no longer regular: it is a Brownian motion on $(-\infty,1]$ while $X-1$ becomes a $3$-dimensional Bessel process on $[1,\infty)$. The set $\{1\}$ can be viewed as a {\em soft border} between two regimes allowing transitions from the Brownian regime to the Bessel one but not the other way around. 
	
	This formal description can be made more rigorous by analysing $L^1_{\infty}$ - the cumulative local time spent at $1$ at the lifetime. It is well-known that $L^1_{\infty}$ is exponentially distributed under $Q^1$ (see Paragraph 13 in Section II.2 in \cite{BorSal}). For a fixed $\delta>0$ the parameter of this exponential distribution equals $\frac{s'_{\delta}(1)}{2 u_{\delta}(1,1)}=\frac{1}{2\delta}$. In particular $Q^1(L^1_{\infty})=2\delta\rar  0$ as $\delta$ tends to $0$. Moreover, for $x <1<y$, $Q^x(T_y<\infty)=1$ whereas
	\[
	Q^y(T_x<\infty)=\frac{u_{\delta}(y,x)}{u_{\delta}(x,x)}=\frac{\delta^2}{(\delta +1-x)(\delta+y-1)}\rar 0 \mbox{ as } \delta \rar 0.
	\]
	Thus, the transitions from $(-\infty,1)$ to $[1,\infty)$ continue as $\delta$ gets small. However, once the border $\{1\}$ is reached, $X$ is strongly pulled into the interior of $[1,\infty)$ and finds it increasingly difficult to get back to the border.
\end{example}

\section{Conclusion} \label{s:conc}
Minimal subharmonic functions for a given one-dimensional diffusion are determined and used to develop a novel Choquet-type integral representation  for positive subharmonic functions. These minimal functions admit representation in terms of last passage times. The Choquet representation is used to construct integral equations for \Ito-Watanabe pairs in terms of the Revuz measure of the associated PCAF. Another novelty of the approach taken in this paper is that an easy numerical scheme exists for the solution of these equations when the Revuz measure of the PCAF satisfy an integrability condition. These pairs are then used to develop a theory of transient transformations. 
\appendix
\section{Appendix}
\begin{proof}[Proof of Proposition \ref{p:Choquniq}]
	The proof will be given for (\ref{e:Choquet1}) in case $s(\ell)>-\infty$ since the other cases can be handled the same way.
	
	Suppose that $g(\ell)=0$ and there are two representations given by $(\kappa_1, \mu_1)$ and $(\kappa_2,\mu_2)$. Define the CAFs $B^i$ by
	\[
	B^i_t=\int_{\ell}^r L^y_t\mu_i(dy).
	\]
	where $L^y$ is the diffusion local time. That is, $B^I$ is a PCAF with Revuz measure $\mu_i$.
	
	Next consider $\ell <a <x<b<r$  and observe that
	\[
	E^x[B^i_{T_{ab}}]=\int_{\ell}^r E^x[L^y_{T_{ab}}]\mu_i(dy),
	\]
	where $T_{ab}=T_a\wedge T_b$. Since $E^x[L^y_{T_{ab}}]=\frac{(s(x\wedge y)-s(a))(s(b)-s(x\vee y))}{s(b)-s(a)}$ for $y\in (a,b)$ (see Paragraph 13 on p.21 of \cite{BorSal}), one obtains
	\be \label{e:potBi}
	E^x[B^i_{T_{ab}}]=\int_{a}^b \frac{(s(x\wedge y)-s(a))(s(b)-s(x\vee y))}{s(b)-s(a)}\mu_i(dy).
	\ee
	
	On the other hand, 
	\bean
	E^x[g(X_{T_{ab}})]&=&\kappa_i (s(x)-s(\ell)) +\int_{\ell}^r E^x\left[\left(s(X_{T_{ab}})-s(y)\right)^+\right]\mu_i(dy)\\
	&=&\kappa_i (s(x)-s(\ell)) + \int_{\ell}^a \left\{(s(a)-s(y))\frac{s(b)-s(x)}{s(b)-s(a)}+(s(b)-s(y))\frac{s(x)-s(a)}{s(b)-s(a)}\right\}\mu_i(dy)\\
	&&+\int_{a+}^b (s(b)-s(y))\frac{s(x)-s(a)}{s(b)-s(a)}\mu_i(dy)\\
	&=&g(x)-\int_{\ell}^b (s(x)-s(y))^+\mu_i(dy)\\
	&&+\int_{\ell}^a \left\{(s(a)-s(y))\frac{s(b)-s(x)}{s(b)-s(a)}+(s(b)-s(y))\frac{s(x)-s(a)}{s(b)-s(a)}\right\}\mu_i(dy)\\
	&&+\int_{a+}^b (s(b)-s(y))\frac{s(x)-s(a)}{s(b)-s(a)}\mu_i(dy)\\
	&=&g(x)+ \int_{a+}^b\frac{(s(x)-s(a))(s(b)-s(y))-(s(x)-s(y))^+(s(b)-s(a))}{s(b)-s(a)}\mu_i(dy)\\
	&=&g(x)+\int_{a}^b \frac{(s(x\wedge y)-s(a))(s(b)-s(x\vee y))}{s(b)-s(a)}\mu_i(dy),
	\eean
	where the third equality follows from the representation of $g$, the fourth is due to the assumption that $x\in (a,b)$, and the last holds since the integrand equals $0$ when $y=a$.
	
	The above in conjunction with $(\ref{e:potBi})$ show that $E^x[B^1_{T_{ab}}]=E^x[B^2_{T_{ab}}]=E^x[g(X_{T_{ab}})]-g(x)$ for all $x \in (a,b)$. That is, the PCAF $B^i$s have the same potential when $X$ is stopped at $T_{ab}$. Therefore, $B^1$ and $B^2$ are indistinguishable upto $T_{ab}$ by Theorem IV.2.13 in \cite{BG}. Hence, $B^1=B^2$ by continuity and the arbitrariness of $a$ and $b$. Moreover, this implies they have the same Revuz measure in view of (\ref{d:Revmes}). Hence, $\mu_1=\mu_2$, which in turn implies $\kappa_1=\kappa_2$. The remaining assertions have already been proved. 
\end{proof}
\begin{proof}[Proof of Lemma \ref{l:mainR}] Proof will consider $v_c(x,y)=s(x \vee y)-s(c\vee y)$ and the other case is handled similarly.
	\begin{enumerate}[leftmargin=*]
		\item  Note that one can choose $c$ such that $a<c<x$ without loss of generality. Then,
		\bean
		E^x[g(X_{T_{ab}})]&=& g(c) + \kappa (s(x)-s(c)) \int_{\ell}^r (E^x [s(X_{T_{ab}}\vee y)]-s(c\vee y))g(y)\mu_A(dy) \\
		&=& g(c)+  \kappa (s(x)-s(c))+ \int_{\ell}^a (E^x s(X_{T_{ab}})-s(c))g(y)\mu_A(dy)\\
		&&+\int_{a+}^b (E^x s(X_{T_{ab}}\vee y)-s(c\vee y))g(y)\mu_A(dy)\\
		&=& g(c)+  \kappa (s(x)-s(c))+\int_{\ell}^{a} (s(x)-s(c))g(y)\mu_A(dy) \\
		&&+\int_{a+}^b (E^x [s(X_{T_{ab}}\vee y)]-s(c\vee y))g(y)\mu_A(dy),
		\eean
		where the first equality is due to the fact that $s(z\vee y)=s(z)$ for $y<a$ and $s(z\vee y)=s(y)$ whenever $z\in (a,b)$. On the other hand, for $y \in [a,b]$,
		\bean
		E^x[s(X_{T_{ab}}\vee y)]&=&s(y) P^x(T_a<T_b)+ s(b) P^x(T_b<T_a)= s(y)\frac{s(b)-s(x)}{s(b)-s(a)}+ s(b) \frac{s(x)-s(a)}{s(b)-s(a)}\\
		&=&s(x)+ \frac{(s(y)-s(a))(s(b)-s(x))}{s(b)-s(a)}.
		\eean
		Therefore,
		\bean
		E^x[g(X_{T_{ab}})]&=& g(c) + \kappa (s(x)-s(c))+\int_{\ell}^x (s(x\vee y)-s(c\vee y))g(y)\mu_A(dy)\nn \\
		&& +\int_{x+}^b \left(s(x)-s(y)-\frac{(s(y)-s(a))(s(b)-s(x))}{s(b)-s(a)}\right)g(y)\mu_A(dy)\nn\\
		&&\int_{a+}^x\frac{(s(y)-s(a))(s(b)-s(x))}{s(b)-s(a)}g(y)\mu_A(dy)\nn\\
		&=&g(x)+ \int_{x+}^b \frac{(s(x)-s(a))(s(b)-s(y))}{s(b)-s(a)}g(y)\mu_A(dy)\nn\\
		&&+\int_{a+}^x\frac{(s(y)-s(a))(s(b)-s(x))}{s(b)-s(a)}g(y)\mu_A(dy)\nn\\
		&=&g(x)+ \int_a^b \frac{(s(x\wedge y)-s(a))(s(b)-s(x\vee y))}{s(b)-s(a)}g(y)\mu_A(dy),
		\eean
		where the second equality is due to (\ref{e:IEh}), the third follows from that $y \mapsto \frac{(s(y)-s(a))(s(b)-s(x))}{s(b)-s(a)}$ vanishes at $y=a$.
		\item It is clear from the definition that $g$ is continuous on $\elr$. Thus, both $O^+$ and $O^-$ are open. Let $ x \in O^+$ and consider a neighborhood around $x$ with left endpoint $a$ and right endpoint $b$ such that $(a,b)\subset O^+$. Then, (\ref{e:EhabR}) yields
		\[
		E^x[g(X_{T_{ab}})]\geq g(x),
		\]
		as a consequence of the strict positivity of $g$ on $O^+$. This proves that $g$ is $s$-convex on $O^+$ since
		\[
		E^x[g(X_{T_{ab}})]= g(a)\frac{s(b)-s(x)}{s(b)-s(a)}+ g(b)\frac{s(x)-s(a)}{s(b)-s(a)}.
		\]
		The same technique can be used to prove $g$ is $s$-concave on $O^-$. 
		\item Suppose $v_c(x,y)= s(x\vee y)-s(c\vee y)$. First observe that the integrability assumption $\int_l^r |s(x\vee y)-s(c \vee y)||g(y)|\mu_A(dy)<\infty$ implies
		\[
		\int_l^{x} |g(y)|\mu_A(dy)<\infty
		\]
		for any $x \in \elr$. 
		
		Moreover, for any $a<x<b$,
		\[
		\frac{(s(x\wedge y)-s(a))(s(b)-s(x\vee y))}{s(b)-s(a)}\leq s(b)-s(x)
		\]
		for all $y \in (a,b)$. Thus, the dominated convergence theorem applied to (\ref{e:EhabR}) yields
		\[
		(s(b)-s(x))\lim_{a\rar \ell}\frac{g(a)}{s(b)-s(a)}+g(b)= \lim_{a\rar \ell}E^x[g(X_{T_{ab}})]=g(x)+\int_{\ell}^b u(b; x,y)g(y)\mu_A(dy),
		\]
		where $u(b;\cdot,\cdot)$ is given by 
		\be \label{e:ulb}
		u(b;x,y)=\lim_{a \rar \ell}\frac{\left(s(x\wedge y)-s(a)\right)\left(s(b)-s(x\vee y\right)}{s(b)-s(a)}.
		\ee
		
		Thus, since the right hand side is finite, one must have
		\[
		\lim_{a\rar \ell}\frac{g(a)}{s(b)-s(a)}<\infty,
		\]	
		which in turn yields the finiteness of $g(\ell+)$ if $s(\ell)>-\infty$. 
		
		Note in particular that if $g$ is u.i. near $\ell$ and $s(\ell)=-\infty$, $\lim_{a\rar \ell}E^x[g(X_{T_{ab}})]=g(b)$ and, consequently, $\lim_{a\rar \ell}\frac{g(a)}{s(b)-s(a)}=0$. 
		
		Thus,
		\[
		\lim_{a\rar \ell}\frac{g(a)}{s(b)-s(a)}=\lim_{x\rar \ell}\frac{g(x)}{s(b)-s(x)}+\lim_{x\rar \ell}\int_{\ell}^b \frac{s(b)-s(x\vee y)}{s(b)-s(x)}g(y)\mu_A(dy),
		\]
		which in turn yields 
		\[
		\lim_{x\rar \ell}\int_{\ell}^b \frac{s(b)-s(x\vee y)}{s(b)-s(x)}g(y)\mu_A(dy)=0.
		\]
		
		On the other hand, (\ref{e:genelIE}) implies
		\be \label{e:gblim}
		0=\lim_{x\rar \ell}\frac{g(x)}{s(b)-s(x)}+ \kappa+ \lim_{x\rar \ell}\int_{\ell}^b \frac{s(b)-s(x\vee y)}{s(b)-s(x)}g(y)\mu_A(dy),
		\ee
		which establishes $\kappa=-\lim_{x\rar \ell}\frac{g(x)}{s(b)-s(x)}=0$. Consequently, $g(\ell+)$ is finite.
		\item If $g$ changes its sign,  there exists a $c^* \in (\ell,r)$ such that either $g$ is decreasing, $s$-convex on $(\ell,c^*)$ and $s$-concave on $(c^*,r)$ or increasing, $s$-concave on $(\ell,c^*)$ and $s$-convex on $(c^*,r)$. Since $-g$ also solves (\ref{e:genelIE}), assume without loss of generality that the former case holds. Fix $c \in (\ell,c^*)$ and let $x \in (c,c^*)$ be arbitrary. Then, assuming $v_c(x,y)= s(x\vee y)-s(c\vee y)$ 
		\[
		g(x)= g(c) +  \int_{\ell}^x( s(x)-s(c\vee y))g(y)\mu_A(dy) \geq g(c)
		\]
		since $g$ is non-negative on $(\ell,c^*)$. This shows $g$ is increasing on $(l,c^*)$ yielding a contradiction. 
		
		Similarly, if $v_c(x,y)=s(c\wedge y)-s(x\wedge y)$, let $c^*<c<x$ and note that $g$ is nonpositive on $(c^*,r)$. Then,
		\[
		g(x)= g(c) +  \int_{x+}^r( s(c)-s(x \wedge y))g(y)\mu_A(dy) \geq g(c)
		\]
		contradicts that $g$ is decreasing.
		\item  Note that, for sufficiently small $h>0$ and  $x\in [\ell,r)$ such that $g(x)<\infty$,
		\bean
		g(x+h)-g(x)&=&\kappa (s(x+h)-s(x))+(s((x+h))-s(x))\int_{\ell}^x g(y)\mu_A(dy)\\
		&&+\int_{x+}^{x+h}(s(x+h)-s(y))g(y)\mu_A(dy),
		\eean
		which in turn yields 
		\[
		\frac{d^+g(x)}{ds}=\kappa+ \int_{\ell}^x g(y)\mu_A(dy)
		\]
		since $g$ is continuous, $\mu_A$ is finite on any small neighbourhood around $x$ and does not charge $\{\ell\}$. 
		
		Similarly,
		\[
		g(x)-g(x-h)=\kappa(s(x)-s(x-h)) +(s(x)-s(x-h))\int_{\ell}^{x-h}g(y)\mu_A(dy) + \int_{(x-h)+}^{x}(s(x)-s(y))g(y)\mu_A(dy),
		\]
		and therefore 
		\[
		\frac{d^-g(x)}{ds}=\kappa + \int_{\ell}^{x-} g(y)\mu_A(dy).
		\]
		The other case for $v_c$ is handled in the same manner.
	\end{enumerate}
\end{proof}

\begin{proof}[Proof of Theorem \ref{t:main1}]
	
	\Implies{t:eq1}{t:eq2}:  Since proofs are analogous, assume $g\in G_{\ell}$. Since $g\in \cS$, it admits a representation given by (\ref{e:Choquetgen}).  Let $c^*$ be a point where $g(c^*)=\inf_{x\in \bfE}g(x)$ and note that $c^*$ could equal $\ell$. 
	
	First consider the case $c^*\in \elr$. It follows from Theorem \ref{t:choquet_gen} the existence of two Borel measures $\mu_1$ and $\mu_2$ on the Borel subsets of $\elr$ such that
	\be \label{e:galt1}
	g(x)= g(c^*)+   \int_{\ell}^r (s(x)-s(y))^+ \mu_1(dy) +\int_{\ell}^r (s(y)-s(x))^+ \mu_2(dy)
	\ee
	such that $\mu_1((\ell, c^*))=\mu_2((c^*,r))=0$.  Moreover, since $g(X)-\int_0^{\cdot}g(X_{t-})dA_t$ is a local martingale, $\mu_1+ \mu_2 = g\cdot \mu_A$. 
	
	Observe that the above  implies $\int_{\ell}^{c^*}\mu_2(dy)<\infty$. Indeed, straightforward calculations yield
	\[
	\frac{dg^+}{ds}(x)=-\int_{x+}^r\mu_2(dy)+\int_{\ell}^x\mu_1(dy).
	\]
	In particular, $\infty>\frac{dg^+}{ds}(\ell)=-\mu_2((\ell,c^*])$.
	
	Next, rewriting (\ref{e:galt1}) one arrives at
	\bean
	g(x)&=&g(c^*) -(s(x)-s(c^*))\int_{\ell}^{c^*}\mu_2(dy)+ \int_{\ell}^r (s(x)-s(y))^+ \mu_1(dy) \\
	&&+\int_{\ell}^r\left\{(s(y)-s(x))^+ + s(x)-s(c)\right\} \mu_2(dy)\\
	&=&g(c^*) -(s(x)-s(c^*))\mu_2((\ell,c^*])+ \int_{\ell}^r (s(x\vee c^*)-s(y\vee c^*)) \mu_1(dy) \\
	&&+\int_{\ell}^r\left\{\chf_{y\geq x}(s(y)-s(c^*)) + \chf_{y<x}(s(x)-s(c^*))\right\} \mu_2(dy)\\
	&=&g(c^*)-(s(x)-s(c^*))\mu_2((\ell,c^*])+ \int_{\ell}^r (s(x\vee c^*)-s(y\vee c^*)) \mu_1(dy) \\
	&&+\int_{\ell}^r(s(x\vee c^*)-s(y\vee c^*)) \mu_2(dy)\\
	&=&g(c^*)-(s(x)-s(c^*))\mu_2((\ell,c^*])+  \int_{\ell}^r (s(x\vee c^*)-s(y\vee c^*)) g(y)\mu_A(dy).
	\eean
	
	Thus, (\ref{e:IEh}) holds with $\kappa_1=-\mu_2((\ell,c^*])$ and $c=c^*$.  Now a quick inspection of (\ref{e:IEh}) reveals that if $g$ solves  (\ref{e:IEh}) for one $c$, it solves it for all. 
	
	On the other hand, if $c=\ell$,  $g$ is non-decreasing with decomposition  
	\[
	g(x)=g(\ell+)+  \int_{\ell}^r (s(x)-s(y))^+ g(y)\mu_A(dy),
	\]
	utilising the fact that $\kappa$ in (\ref{e:Choquetgen1}) coincides with $\frac{d^+g(\ell+)}{ds}$ and the Revuz measure of the PCAF $B$ that makes $g(X)-B$ a local martingale is given by $g\cdot \mu_A$ as before.  Thus, repeating the calculations leading to (\ref{e:ginteq}) yields that (\ref{e:IEh}) holds for any $c\in \elr$.
	
	\Implies{t:eq2}{t:eq1} The proof will be given for the first equation as the other case can be done similarly. 
	
	It follows from Lemma \ref{l:mainR} that $g$ is $s$-convex on $\elr$ and its right $s$-derivative at $\ell$ is finite. In particular, $g$ is subharmonic and there exists a PCAF $B$ by Theorem 51.7 in \cite{GTMP} that $g(X)-B$ is a $P^x$-local martingale for any $x \in \elr$. In particular, for any $\ell <a<x<b<r$,
	\[
	E^x[g(X_{T_{ab}})]=g(x)+ E^x[B_{T_{ab}}]=g(x)+ \int_a^b \frac{(s(x\wedge y)-s(a))(s(b)-s(x\vee y))}{s(b)-s(a)}\mu_B(dy)
	\]
	due to (\ref{e:potentialA}), where $\mu_B$ is the Revuz measure associated with $B$. On the other hand, (\ref{e:EhabR}) yields
	\[
	E^x[g(X_{T_{ab}})]=g(x)+  \int_a^b \frac{(s(x\wedge y)-s(a))(s(b)-s(x\vee y))}{s(b)-s(a)}g(y)\mu_A(dy).
	\]
	Since 
	\[
	\int_a^b \frac{(s(x\wedge y)-s(a))(s(b)-s(x\vee y))}{s(b)-s(a)}g(y)\mu_A(dy)=E^x\int_0^{T_{ab}} g(X_t)dA_t,
	\]
	one deduces easily that $E^x[B_{T_{ab}}]=E^x\int_0^{T_{ab}} g(X_t)dA_t$ for all $a<x<b$. That is, the potentials of $g \cdot A$ and $B$ coincide when $X$ is killed at $T_{ab}$, which in turn leads to the fact that $B$ and $g \cdot A$  are indistinguishable by Theorem IV.2.13 in \cite{BG} since $a$ and $b$ are arbitrary. Thus, $g(X)-\int_0^{\cdot}g(X_t)dA_t$ is a local martingale. A simple integration by parts and the fact that $g$ is bounded on the compact intervals of $\elr$ show that $g(X)\exp(-A)$ is a local martingale. Since $g$ is not identically $0$, $(g,A)$ is an \Ito-Watanabe pair in view of Theorem \ref{t:submvanish}.
	
	Uniform integrability near $\ell$ is obvious since $g$ is bounded on $(\ell, b)$ for any $b<r$ in view of Lemma \ref{l:mainR} and the fact that $g\geq 0$. 
\end{proof}

\begin{proof}[Proof of Theorem \ref{t:cofm}]
	The first statement follows directly from Theorem 62.19 in \cite{GTMP}.
	
	To prove the second statement observe that the killing measure on $\elr$ under $Q^x$ is null since there is no killing under $P^x$ and $g(X)\exp(-A)$ is a $P^x$-martingale when stopped at $T_{ab}$ for any $\ell<a<b<r$. 
	
	Moreover, $m_g$ is a symmetry measure for $(Q_t)$. Indeed, if $f$ and $h$ are bounded and measurable functions vanishing at $\Delta$, then
	\bean
	\int_{\ell}^r Q^x[f(X_t)]h(x)g^2(x)m(dx)&=&\int_{\ell}^r E^x[f(X_t)g(X_t)\exp(-A_t)]h(x)g(x)m(dx)\\
	&=&\int_{\ell}^r E^x[h(X_t)g(X_t)\exp(-A_t)]f(x)g(x)m(dx)\\
	&=&\int_{\ell}^r Q^x[h(X_t)]f(x)g^2(x)m(dx),
	\eean
	where the second equality follows from the fact that $m$ is the symmetry measure for $(P_t)$ and $\exp(-A)$ is a multiplicative functional in view of Theorem 13.25 in \cite{ChungWalsh}. Thus, $m_g$ is a speed measure associated to $(Q_t)_{t \geq 0}$. 
	
	Next let us observe that $s_g(X)$ is a $Q^x$-local martingale, where $s_g(x) =\int_c^x s_g(dx)$ for an arbitrary $c \in \elr$. However, this is equivalent to $s_g(X)g(X)\exp(-A)$ is a $P^x$-local martingale. That is, $(s_g g, A)$ has to be an \Ito-Watanabe pair. By killing $X$ at $T_a$ if necessary, this will follow from Corollary \ref{c:main1} if $s_g g$ solves (\ref{e:IEh_choquet}) on $(a,r)$ for any $a>\ell$ once $\ell$ is replaced by $a$.
	
	Indeed, redefining $s_g$ so that $s_g(a)=0$ one has via 
	\[
	\frac{d^+ s_g g}{ds}=\frac{1}{g} + s_g\frac{d^+g}{ds}
	\]
	and integration by parts that 
	\bean
	\frac{d^+ s_g g}{ds}(x)&=&\frac{1}{g(x)}+ s_g(x) \left(\frac{d^+g(a)}{ds}+ \int_a^xg(y)\mu_A(dy)\right)\\
	&=&\frac{1}{g(x)}+ \int_a^xs_g(y)g(y)\mu_A(dy)+ \int_a^x\frac{d^+g(y)}{ds}g^{-2}(y)\mu_A(dy)\\
	&=&\frac{1}{g(a)}+ \int_a^x s_g(y)g(y)\mu_A(dy),
	\eean
	where the first equality follows from (\ref{e:gsder}). Therefore,
	\bean
	s_g(x)g(x)&=&\frac{1}{g(a)}(s(x)-s(a))+ \int_a^x \int_a^z s_g(y)g(y)\mu_A(dy)ds(z)\\
	&=&\frac{1}{g(a)}(s(x)-s(a))+ \int_a^x (s(x)-s(y))s_g(y)g(y)\mu_A(dy),
	\eean
	which establishes that $(s_g g, A)$ is an \Ito-Watanabe pair in view of Corollary \ref{c:main1}.
	
	Therefore,  once the speed measure $m_g$ is fixed, the associated scale function, $s^*$, will satisfy $s^*(dx)=k s_g(dx)$ for some $k>0$. Thus, the proof will be complete once it is shown that $k=1$. To this end, note that the potential density  of $X$ killed at $T_{ab}$ under the dynamics defined by $(Q_t)$ is given by $ku^*_{ab}$, where
	\[
	u^*_{ab}(x,y)= \frac{(s_g(x \wedge y)-s_g(a))(s_g(b)-s_g(x\vee y))}{s_g(b)-s_g(a)}. 
	\]
	To determine $k$, the quantity $Q^x(s(X_{T_{ab}}))-s(x)$ will be computed in two ways. First,
	\be \label{Qxs1}
	Q^x(s(X_{T_{ab}}))-s(x)= s(a)\frac{s_g(b)-s_g(x)}{s_g(b)-s_g(a)}+s(b)\frac{s_g(x)-s_g(a)}{s_g(b)-s_g(a)}-s(x).
	\ee
	On the other hand,
	\bean
	g(x) Q^x(s(X_{T_{ab}}))&=& E^x \left[s(X_{T_{ab}})g(X_{T_{ab}})\exp(-A_{T_{ab}})\right] \\
	&=& g(x)s(x) + E^x\left[\int_0^{T_{ab}}\exp(-A_t)g'(X_t)dB_t\right]\\
	&=&g(x)s(x) + g(x)Q^x\left[\int_0^{T_{ab}}\frac{g'(X_t)}{g(X_t)}dB_t\right],
	\eean
	where $B$ is as in Theorem \ref{t:ibp} and $g'$ stands for the left derivative of $g$ with respect to $s$. Since the Revuz measure of $B$ under $Q^x$ becomes $2g^2(x)s(dx)$ as will be shown below, one obtains
	\be \label{Qxs2}
	Q^x(s(X_{T_{ab}}))-s(x)=2k\int_a^b u_{ab}^*(x,y) g'(y) g(y)s(dy).
	\ee
	Now, combining (\ref{Qxs1}) and (\ref{Qxs2}) and repeating the similar calculations used in the proof of Theorem VII.3.12 in \cite{RY} yield
	\[
	\frac{ds}{ds_g}(x)-\frac{ds}{ds_g}(y)=2k\int_x^y g'(y)g(y)s(dy).
	\]
	However, the left hand side of the above is $g^2(x)-g^2(y)$ while the right hand side equals $k(g^2(x)-g^2(y))$. Thus, $k$ must equal $1$. 
	
	Thus, it remains to prove the last statement. First, suppose $B_t:=\int_0^t f(X_s)ds$ for a non-negative measurable $f$. Then,
	\[
	Q^x(B_{T_{ab}})=\int_a^b u^*_{ab}(x,y)f(y) m_g(dy)=\int_a^b u^*_{ab}(x,y)f(y) g^2(y)m(dy),
	\]
	for any $\ell <a<b<r$, which implies the Revuz measure under  $(Q^x)$ given the speed measure $m_g$ equals $f(y)g^2(y)m(dy)$. Since the corresponding measure under $(P^x)$ is given by $f(y)m(dy)$, the claim follows for all such $B$. 
	
	Moreover, by the occupation times formula $B_{T_{ab}}=\int_{\ell}^r L^y_{T_{ab}} f(y) m(dy)$, where $L^y$ is the local time of $X$ at level $y$ under $(P^x)$ with respect to $m$. Thus, for any non-negative measurable $f$
	\[
	\int_a^b u^*_{ab}(x,y)f(y) g^2(y)m(dy)=\int_{\ell}^r Q^x(L^y_{T_{ab}}) f(y) m(dy).
	\]
	On the other hand, $L^y$ is a PCAF for $X$ under $(Q^x)$ and its support is contained in $\{y\}$ since $Q^x \ll P^x$ on $\cF_t^*$  for every $t$ when restricted to $[t<\zeta]$. Then, by Proposition 68.1 in \cite{GTMP} $L^y$ is proportional to the local time at $y$ with respect to $m_g$ under $Q^x$. Therefore, $Q^x(L^y_{T_{ab}})= \alpha u^*_{ab}(x,y)$ for some $\alpha>0$, which can be easily seen equal to $g^2(y)$ in view of the above. This in turn implies the Revuz measure for $L^y$ is given by $g^2(y)\epsilon_y(dx)$, where $\epsilon_y$ is the Dirac measure at $y$. The proof is now complete since if $B$ is a PCAF with  Revuz measure $\mu$ under  $(P^x)$ for the speed measure $m$, $B=\int_{\ell}^r\mu(dy)  L^y $.
\end{proof}
\bibliographystyle{siam}
\bibliography{../ref.bib}

\begin{thebibliography}{10}

\bibitem{AS98}
{\sc S.~Assing and W.~M. Schmidt}, {\em Continuous strong {M}arkov processes in
  dimension one}, vol.~1688 of Lecture Notes in Mathematics, Springer-Verlag,
  Berlin, 1998.
\newblock A stochastic calculus approach.

\bibitem{AMYrelmar}
{\sc J.~Az\'ema, P.-A. Meyer, and M.~Yor}, {\em Martingales relatives},
  S\'eminaire de probabilit\'es de Strasbourg, 26 (1992), pp.~307--321.

\bibitem{BL}
{\sc M.~Beibel and H.~R. Lerche}, {\em Optimal stopping of regular diffusions
  under random discounting}, Theory of Probability \& Its Applications, 45
  (2001), pp.~547--557.

\bibitem{BGafd64}
{\sc R.~Blumenthal and R.~Getoor}, {\em Additive functionals of {M}arkov
  processes in duality}, Transactions of the American Mathematical Society, 112
  (1964), pp.~131--163.

\bibitem{BG}
{\sc R.~M. Blumenthal and R.~K. Getoor}, {\em Markov processes and potential
  theory}, Pure and Applied Mathematics, Vol. 29, Academic Press, New
  York-London, 1968.

\bibitem{BorSal}
{\sc A.~N. Borodin and P.~Salminen}, {\em Handbook of {B}rownian motion---facts
  and formulae}, Probability and its Applications, Birkh\"auser Verlag, Basel,
  second~ed., 2002.

\bibitem{rectr}
{\sc U.~\c{C}etin}, {\em Diffusion transformations, {B}lack–{S}choles
  equation and optimal stopping}, Ann. Appl. Probab., 28 (2018),
  pp.~3102--3151.

\bibitem{ChungRao80}
{\sc K.~L. Chung and K.~M. Rao}, {\em A new setting for potential theory. {I}},
  Annales de l’institut Fourier, 30 (1980), pp.~167--198.

\bibitem{ChungWalsh}
{\sc K.~L. Chung and J.~B. Walsh}, {\em Markov processes, {B}rownian motion,
  and time symmetry}, vol.~249 of Grundlehren der Mathematischen Wissenschaften
  [Fundamental Principles of Mathematical Sciences], Springer, New York,
  second~ed., 2005.

\bibitem{CPT}
{\sc M.~Ciss{\'e}, P.~Patie, and E.~Tanr{\'e}}, {\em Optimal stopping problems
  for some markov processes}, The Annals of applied probability,  (2012),
  pp.~1243--1265.

\bibitem{dayanikRD}
{\sc S.~Dayanik}, {\em Optimal {Stopping} of {Linear} {Diffusions} with
  {Random} {Discounting}}, Mathematics of Operations Research, 33 (2008),
  pp.~645--661.

\bibitem{DM-C}
{\sc C.~Dellacherie and P.-A. Meyer}, {\em Probabilities and potential. {C}},
  vol.~151 of North-Holland Mathematics Studies, North-Holland Publishing Co.,
  Amsterdam, 1988.
\newblock Potential theory for discrete and continuous semigroups, Translated
  from the French by J. Norris.

\bibitem{Duncan71}
{\sc R.~Duncan}, {\em Integral representation of excessive functions of a
  {Markov} process.}, Pacific Journal of Mathematics, 39 (1971), pp.~125--144.

\bibitem{EH}
{\sc S.~N. Evans and A.~Hening}, {\em Markov processes conditioned on their
  location at large exponential times}, Stochastic processes and their
  applications, 129 (2019), pp.~1622--1658.

\bibitem{FP-Kac}
{\sc P.~J. Fitzsimmons and J.~Pitman}, {\em Kac's moment formula and the
  {F}eynman-{K}ac formula for additive functionals of a {M}arkov process},
  Stochastic Process. Appl., 79 (1999), pp.~117--134.

\bibitem{skewBM81}
{\sc J.~M. Harrison and L.~A. Shepp}, {\em On {Skew} {Brownian} {Motion}}, The
  Annals of Probability, 9 (1981), pp.~309--313.

\bibitem{IM}
{\sc K.~It{\^o} and H.~P. McKean, Jr.}, {\em Diffusion processes and their
  sample paths}, Springer-Verlag, Berlin-New York, 1974.
\newblock Second printing, corrected, Die Grundlehren der mathematischen
  Wissenschaften, Band 125.

\bibitem{IWmult}
{\sc K.~It\^{o} and S.~Watanabe}, {\em Transformation of {M}arkov processes by
  multiplicative functionals}, Ann. Inst. Fourier (Grenoble), 15 (1965),
  pp.~13--30.

\bibitem{MKpar}
{\sc H.~P. McKean}, {\em Elementary solutions for certain parabolic partial
  differential equations}, Transactions of the American Mathematical Society,
  82 (1956), pp.~519--548.

\bibitem{MU}
{\sc A.~Mijatovi{\'c} and M.~Urusov}, {\em Convergence of integral functionals
  of one-dimensional diffusions}, Electron. Commun. Probab., 17 (2012), pp.~no.
  61, 13.

\bibitem{Nikeghbali06}
{\sc A.~Nikeghbali}, {\em Enlargements of filtrations and path decompositions
  at non stopping times}, Probability Theory and Related Fields, 136 (2006),
  pp.~524--540.

\bibitem{Nikeghbalimult}
\leavevmode\vrule height 2pt depth -1.6pt width 23pt, {\em Multiplicative
  decompositions and frequency of vanishing of nonnegative submartingales},
  Journal of Theoretical Probability, 19 (2006), pp.~931--949.

\bibitem{PhelpsChoquet}
{\sc R.~R. Phelps}, {\em Lectures on Choquet's theorem}, Springer Science \&
  Business Media, 2001.

\bibitem{PRYoption}
{\sc C.~Profeta, B.~Roynette, and M.~Yor}, {\em Option Prices as Probabilities:
  A new look at generalized Black-Scholes formulae}, Springer Science \&
  Business Media, 2010.

\bibitem{Pro}
{\sc P.~E. Protter}, {\em Stochastic integration and differential equations},
  vol.~21 of Stochastic Modelling and Applied Probability, Springer-Verlag,
  Berlin, 2005.
\newblock Second edition. Version 2.1, Corrected third printing.

\bibitem{RY}
{\sc D.~Revuz and M.~Yor}, {\em Continuous martingales and {B}rownian motion},
  vol.~293 of Grundlehren der Mathematischen Wissenschaften [Fundamental
  Principles of Mathematical Sciences], Springer-Verlag, Berlin, third~ed.,
  1999.

\bibitem{rost_stopping71}
{\sc H.~Rost}, {\em The {Stopping} {Distributions} of a {Markov} {Process}.},
  Inventiones mathematicae, 14 (1971), pp.~1--16.

\bibitem{GTMP}
{\sc M.~Sharpe}, {\em General theory of {M}arkov processes}, vol.~133 of Pure
  and Applied Mathematics, Academic Press, Inc., Boston, MA, 1988.

\end{thebibliography}
\end{document}